\newtheorem{theorem}{Theorem}[section]
\newtheorem{prop}[theorem]{Proposition}
\newtheorem{lemma}[theorem]{Lemma}
\newtheorem{cor}[theorem]{Corollary}
\theoremstyle{definition}
\newtheorem{example}[theorem]{Example}
\newtheorem{remark}[theorem]{Remark}
\numberwithin{equation}{section}
\newcommand{\PP}{\mathbb{P}}
\newcommand{\ZZ}{\mathbb{Z}}
\newcommand{\QQ}{\mathbb{Q}}
\newcommand{\RR}{\mathbb{R}}
\newcommand{\CC}{\mathbb{C}}
\newcommand{\norminf}[1]{\lVert #1 \rVert_\infty}
\newcommand{\norm}[1]{\lVert #1 \rVert}
\newcommand{\ringofintegers}{\mathcal{O}_{K}}
\newcommand{\localringofintegers}{\mathcal{O}_{K,\pprime}}
\newcommand{\pprime}{\mathfrak{p}}
\newcommand{\pvalue}[1]{v_\pprime(#1)}
\newcommand{\absvalue}[1]{\lvert #1 \rvert} 
\newcommand{\pabsvalue}[1]{\lvert #1 \rvert_\pprime}
\newcommand{\idealnorm}[1]{\mathfrak{N}(#1)}
\newcommand{\vol}{\mathrm{vol}}
\newcommand{\rk}{\mathrm{rk}}
\newcommand{\mapmindesingmodel}{\pi}
\newcommand{\constant}{\mathcal{A}}
\DeclareMathOperator{\Pic}{Pic}
\DeclareMathOperator{\Spec}{Spec}
\title[Integral Points on a del Pezzo Surface]{Integral Points on a del Pezzo Surface over Imaginary Quadratic Fields}
\author{Judith Ortmann}
\date{24 July 2023}
\address{Institut f\"ur Algebra, Zahlentheorie und Diskrete Mathematik, Leibniz Universit\"at Hannover, Welfengarten 1, 30167 Hannover, Germany}
\email{ortmann@math.uni-hannover.de}
\begin{document}
	
	\begin{abstract}
		We characterise integral points of bounded log-anticanonical height on a quartic del Pezzo surface of singularity type $\mathbf{A}_3$ over imaginary quadratic fields with respect to its singularity and its lines. Furthermore, we count these integral points of bounded height by using universal torsors and interpret the count geometrically to prove an analogue of Manin's conjecture for the set of integral points with respect to the singularity and to a line.
	\end{abstract}

	\subjclass[2020]{Primary 11D45; Secondary 11G35, 11R11, 14G05, 14J26}
	\keywords{Integral points, del Pezzo surface, universal torsor, imaginary quadratic field, Manin's conjecture}
	
	\setcounter{tocdepth}{1}
	\maketitle
	\tableofcontents
	\allowdisplaybreaks

	\section{Introduction}
	\medskip
	
	Manin's conjecture \cite{fmt89,batyrev_manin90} predicts the asymptotic behaviour of the number of rational points on Fano varieties. In recent years, it was proved for various classes of varieties, for example for toric varieties \cite{batyrev1998toric}, equivariant compactifications of vector groups \cite{chambert-loir_tschinkel_2002} and some smooth del Pezzo surfaces \cite{breteche02,breteche11}.
	The leading constant appearing in the asymptotic formulas was made explicit, and the conjecture was generalised to include the singular del Pezzo surface we are considering by Peyre \cite{pey95,pey03} and Batyrev and Tschinkel \cite{batyrev_tschinkel98}.
	
	Results for integral points analogous to Manin's conjecture are often more difficult to prove, and less is known. Chambert-Loir and Tschinkel \cite{chambertloir_tschinkel_2010} constructed a framework for a geometric interpretation of the density of integral points which was refined by Wilsch \cite{wilsch_2021b} and proven for partial equivariant compactifications of vector groups \cite{chambertloir_tschinkel_2012} and some del Pezzo surfaces \cite{derenthal_wilsch_21}, for example.
	
	We recall three major methods that have been applied to Manin's conjecture and its analogue on integral points. 
	The \emph{circle method} was used to prove results for rational and integral points on high-dimensional complete intersections over $\QQ$ \cite{bir62,pey95}, and Loughran \cite{loughran15} generalised this work for rational points to arbitrary number fields from the work of Skinner \cite{ski97}.
	Tschinkel et al.\ used \emph{harmonic analysis} to give asymptotic formulas, for example, for the number of rational points on toric varieties and equivariant compactifications of vector groups over arbitrary number fields \cite{batyrev1998toric,chambert-loir_tschinkel_2002}. By using the same method, Chambert-Loir and Tschinkel analysed integral points on partial equivariant compactifications of vector groups over arbitrary number fields \cite{chambertloir_tschinkel_2012}. Takloo-Bighash and Tschinkel (split case) as well as Chow (nonsplit case) used harmonic analysis to analyse integral points on partial bi-equivariant compactifications of semi-simple groups of adjoint type \cite{takloo_bighash_tschinkel_2013, chow_19}.
	Similar to the harmonic analysis approach, in the area of homogeneous dynamics there are results using ergodic theory on exploiting a group action of linear algebraic groups and their homogeneous spaces to study the number of lattice points on certain affine varieties, see for example \cite{eskin_mozes_shah_96, eskin_mcmullen_93}. These results can be specialised to the case of integral points over imaginary quadratic fields.
	
	The \emph{universal torsor method} is particularly used for many singular and some smooth del Pezzo surfaces. Over the rational numbers there are, for example, results for rational points by de la Bretèche and Browning \cite{breteche02,breteche11}. 
	Later the universal torsor method was extended from $\QQ$ to other number fields. Derenthal, Frei and Pieropan applied this method over imaginary quadratic fields \cite{derenthal14_compos, derenthal_frei_compos2, derenthal_frei15, Pieropan_2016}, starting with a singular quartic del Pezzo surface with an $\mathbf{A}_3$ singularity. These results were generalised to arbitrary number fields by Frei and Pieropan \cite{frei_pieropan_15}.
	Derenthal and Wilsch \cite{derenthal_wilsch_21} used the universal torsor method to give an asymptotic formula for the number of integral points on a singular quartic del Pezzo surface with an $\mathbf{A}_1$ and an $\mathbf{A}_3$ singularity over $\QQ$. 
	
	We observe that the universal torsor method has been used for integral points so far only over $\QQ$. The aim of this paper is to start the generalisation of this method for integral points to number fields beyond $\QQ$. A first natural step is the consideration of imaginary quadratic fields: here we have to deal with class number greater than one, but the group of units is still finite and we only have one archimedean place.
	As a first example, we study the asymptotic behaviour of integral points of bounded height over a singular del Pezzo surface $S$ of degree 4 with an $\mathbf{A}_3$ singularity over imaginary quadratic fields. This is the same del Pezzo surface that Derenthal and Frei considered while generalising the universal torsor method from $\QQ$ to imaginary quadratic fields for rational points, hence it seems to be a good starting point. 
	
	Our main result (\Cref{main_theorem}) is an asymptotic formula for the number of integral points of bounded height on this chosen del Pezzo surface $S$ with respect to two natural choices of boundaries: the singularity and a line.
	This asymptotic formula is of the shape
	\[cB(\log B)^{b-1}.\]
	We will describe the leading constant $c$ and the exponent $b$ later in this introduction. Further, we will see that this formula can be interpreted geometrically (\Cref{section:the expected asymptotic formula} and \Cref{section: The leading constant}). We will show that the leading constant $c$ consists of Tamagawa numbers and combinatorial constants.
	
	To the author's best knowledge, this is the first example of counting integral points beyond $\QQ$ outside the reach of the circle method and without exploiting a group action. We investigate some of the constructions for integral points in a new setting: we choose a variety without a given group action and with more complex geometry than hypersurfaces, which results in Picard groups of high rank.
	We hope that our results will help to better conceptually understand how integral points behave, as less is known in general.

	\subsection{The counting problem}
	Let $K$ be an imaginary quadratic field of arbitrary class number $h_K$, i.e.\ $K=\QQ(\sqrt{d})$ for a negative squarefree integer $d$. We denote by $\ringofintegers$ its ring of integers.
	Let \[\mathcal{C}=\{P_1 = \ringofintegers, P_2,\dots,P_{h_K}\}\] be a fixed system of integral representatives for the ideal classes of $K$.
	See \Cref{section_notation} for further standard notation that we will use already in the introduction.
	
	We consider the anticanonically embedded del Pezzo surface $S\subseteq\PP^4_K$ given by the equations
	\begin{equation}\label{eq_del_Pezzo}
		x_0x_1-x_2x_3 = x_0x_3 + x_1x_3 + x_2x_4 = 0.
	\end{equation}
	It contains exactly one singularity $Q=(0:0:0:0:1)$, and its type is $\mathbf{A}_3$. 
	Our goal is to count integral points of bounded log-anticanonical height on $S$. 
	
	We consider the integral model $\mathcal{S}\subseteq\PP_{\ringofintegers}^4$ of $S$ defined by the same equations over $\ringofintegers$. 
	The closure of every rational point $P\in S(K)$ is an integral point $\overline{P}\in\mathcal{S}(\ringofintegers)$. 
	On the projective variety $\mathcal{S}$, rational and integral points coincide. Hence, we choose an appropriate \emph{boundary} $\mathcal{Z}$ to consider integral points on $\mathcal{S}\setminus\mathcal{Z}$ to make the counting problem interesting.
	A general treatment of such boundaries for del Pezzo surfaces of low degree can be found in \cite[Theorem 10]{derenthal_wilsch_21}.
	We choose two eligible types of boundaries: the singularity and one of the lines of $S$. We start with the former. 
	
	Let $Z_1=Q$, $\mathcal{Z}_1=\overline{Z}_1$, and $\mathcal{U}_1 = \mathcal{S}\setminus \mathcal{Z}_1$. An integral point on $S\setminus Z_1$ is a rational point $\bm{x}\in S(K)$ such that  the corresponding integral point in $\mathcal{S}(\ringofintegers)$ does not meet the closure $\mathcal{Z}_1$ of $Q$ in $\mathcal{S}$. We recall that due to \cite[Section 1]{Schanuel_1979}, any integral or rational point on $S$ can be represented (uniquely up multiplication by units) by $(x_0,\ldots,x_4)\in\ringofintegers^5\setminus\{(0,\dots,0)\}$ satisfying the defining equation \eqref{eq_del_Pezzo} and 
	\begin{equation}\label{corresponding_gcd_ideal}
		x_0\ringofintegers+\dots+x_4\ringofintegers = P_j
	\end{equation} 
	for some $j=1,\ldots,h_K$. 
	A representative $\bm{x} = (x_0:\dots:x_4)$ of a point in $\mathcal{U}_1(\ringofintegers)$ with integral coordinates and \eqref{corresponding_gcd_ideal} satisfies $(x_0:\dots:x_4)\neq Q$ in the residue field $\ringofintegers/\pprime$ for all prime ideals $\pprime$. This means that $\bm{x}$ satisfies the \emph{integrality condition}
	\begin{equation}\label{integrality condition}
		x_0\ringofintegers+\dots+x_3\ringofintegers=P_j.
	\end{equation}
	
	Clearly, the set of integral points $\mathcal{U}_1(\ringofintegers)$ is infinite. Therefore, we consider integral points of bounded height and work with the following \emph{height function}: 
	\begin{equation}\label{height_function_general}
		H_1(\bm{x}) = \frac{\max\lbrace \norminf{x_0}, \norminf{x_1},  \norminf{x_2},  \norminf{x_3}\rbrace}{\idealnorm{x_0\ringofintegers+\cdots+x_3\ringofintegers}}.
	\end{equation}
	We will later see that this can be interpreted as a \emph{log-anticanonical height} on a minimal desingularisation of $S$.
	The number of integral points of bounded height is dominated by the integral points on the five lines 
	\begin{gather}\label{lines_L_i}
		L_1 = \lbrace x_0=x_1=x_2=0\rbrace,\, L_2 = \lbrace x_0=x_2=x_3=0\rbrace,\,L_3 = \lbrace x_0=x_3=x_4=0\rbrace,\,\\
		L_4 = \lbrace x_1=x_2=x_3=0\rbrace,~\text{and}~ L_5 = \lbrace x_1=x_3=x_4=0\rbrace.
	\end{gather}
	Hence, we count integral points on their complement \[V=S\setminus\{x_0x_3= 0\}\] in $S$,
	and we are interested in the asymptotic behaviour of
	\begin{equation}\label{def_N_i}
		N_1(B) = \#\lbrace \bm{x}\in \mathcal{U}_1(\ringofintegers)\cap V(K)\mid H_1(\bm{x})\le B\rbrace,    
	\end{equation}
	the number of integral points of bounded log-anticanonical height that are not contained in the lines, as the height bound $B$ tends to infinity.
	Explicitly, this is
	\begin{equation}\label{descr_N_i}
		N_1(B) = \frac{1}{\omega_K}\sum_{j=1}^{h_K}\#\lbrace (x_0,\ldots,x_4)\in\ringofintegers^5\mid x_0x_3\neq 0,\, \eqref{eq_del_Pezzo},\,\eqref{corresponding_gcd_ideal},\,\eqref{integrality condition},H_1(\bm{x})\le B\rbrace.
	\end{equation}
	
	As a second type of a boundary we choose the line $Z_2 = L_2$. Let $\mathcal{Z}_2 = \overline{Z}_2$ in $\mathcal{S}$, and $\mathcal{U}_2 = \mathcal{S}\setminus \mathcal{Z}_2$. Analogously to the first case, a point $\bm{x}=(x_0:\dots:x_4)\in S$ satisfying \eqref{corresponding_gcd_ideal} with $x_0,\ldots,x_4\in\ringofintegers$ lies in $\mathcal{U}_2(\ringofintegers)$ if and only if
	\begin{equation}\label{integrality_condition_2}
		x_0\ringofintegers + x_2\ringofintegers + x_3\ringofintegers = P_j. 
	\end{equation}
	We use the height
	\begin{equation}\label{height_function_2}
		H_2(\bm{x}) = \frac{\max\lbrace  \norminf{x_0},\norminf{x_2}, \norminf{x_3}\rbrace}{\idealnorm{x_0\ringofintegers + x_2\ringofintegers + x_3\ringofintegers}},
	\end{equation}
	which will again turn out to be log-anticanonical on the minimal desingularisation of $S$. Let $N_2(B)$ be defined analogously to \eqref{def_N_i} with $\mathcal{U}_1$ and $H_1$ replaced by $\mathcal{U}_2$ and $H_2$, respectively. It satisfies the description in \eqref{descr_N_i} with the integrality condition \eqref{integrality condition} replaced by \eqref{integrality_condition_2}, and $H_1$ by $H_2$.
	
	We prove the following asymptotic formulas for these counting problems:
	\begin{theorem}\label{main_theorem}
		As $B\rightarrow\infty$, we have 
		\begin{align*}
			N_1(B) &= \frac{\rho_K^3}{\sqrt{\lvert\Delta_K\rvert}^2}\frac{\pi^2}{18}\prod_\mathfrak{p} \left( 1-\frac{1}{\idealnorm{\pprime}}\right)^3\left(1+\frac{3}{\idealnorm{\pprime}}\right) 
			B(\log B)^4
			+ O(B(\log B)^3\log\log B),\quad\text{and}\\
			N_2(B) &= \frac{\rho_K^2}{\sqrt{\absvalue{\Delta_K}}^2} \frac{11\pi^2}{18}\prod_{\pprime}\left( 1-\frac{1}{\idealnorm{\pprime}}\right)^2\left(1+\frac{2}{\idealnorm{\pprime}}\right)B(\log B)^3
			+ O(B(\log B)^2\log\log B).
		\end{align*}
	\end{theorem}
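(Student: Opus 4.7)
The plan is to apply the universal torsor method, combining the strategy of Derenthal--Frei for rational points on this del Pezzo surface over imaginary quadratic fields with the strategy of Derenthal--Wilsch for integral points over $\QQ$. First I would construct the minimal desingularisation $\mapmindesingmodel\colon\widetilde{S}\to S$ of the $\mathbf{A}_3$ singularity $Q$; this produces a chain of three $(-2)$-curves $E_1,E_2,E_3$, and the five lines $L_1,\dots,L_5$ lift to $(-1)$-curves $\widetilde{L}_1,\dots,\widetilde{L}_5$. I would check that $H_1$ and $H_2$ agree with log-anticanonical heights on $\widetilde{S}$ with respect to the boundary divisors $\widetilde{Z}_1 = E_1+E_2+E_3$ (the exceptional preimage of $Q$) and $\widetilde{Z}_2 = \widetilde{L}_2$, respectively, and read off from the effective cone and its intersection with $\log H_i$ the combinatorial data forcing the exponents $(\log B)^4$ for $N_1(B)$ and $(\log B)^3$ for $N_2(B)$.

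Next I would parameterise integral points via the universal torsor $\mathcal{T}\to\widetilde{S}$. Each point in $\mathcal{U}_i(\ringofintegers)\cap V(K)$ should correspond, up to the action of $\ringofintegers^\times$, to a tuple of ideal class representatives in $\mathcal{C}$ for the torsor coordinates together with an integral tuple on $\mathcal{T}$ satisfying the torsor equations, a list of coprimality conditions encoding the integrality condition \eqref{integrality condition} or \eqref{integrality_condition_2}, and the height bound $H_i\le B$. The bijection can be established by successively factoring the coordinates $x_0,\dots,x_4$ along the divisor classes of the $E_i$ and $\widetilde{L}_j$; the torsor equations then appear from \eqref{eq_del_Pezzo} after dividing out these common factors. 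This mirrors the construction of Derenthal--Frei but additionally tracks the $h_K^{\rk\Pic\widetilde{S}}$ ideal class choices and the integrality constraints imposed by the chosen boundary $\mathcal{Z}_i$.

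For the asymptotic count I would fix the tuple of ideal classes, Möbius-invert the coprimality conditions, and then iterate lattice point counts in $\ringofintegers\subseteq K\otimes_\QQ\RR$ within regions cut out by the height condition. Each free torsor variable contributes a factor $\log B$ to the main term, giving the correct powers of $\log B$ after all summations are carried out. Assembling the constants: the archimedean summation produces a real volume yielding the $\rho_K$-powers divided by $\sqrt{\absvalue{\Delta_K}}^2$ together with the polytope volumes $\pi^2/18$ and $11\pi^2/18$; the Möbius--Euler rearrangement of the non-archimedean factors assembles into the displayed products over $\pprime$ of shape $(1-1/\idealnorm{\pprime})^k(1+k/\idealnorm{\pprime})$ with $k=3$ or $k=2$ matching the rank of the Picard part of the boundary.

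The main obstacle I expect is the uniform control of the error terms through this chain of summations. With $h_K$ ideal classes per torsor coordinate and on the order of six iterated lattice point estimates, the error budget required to match $O(B(\log B)^{b-2}\log\log B)$ is tight: each step must be carried out with an effective error of the form $B(\log B)^j/\log B$ rather than the naive $B(\log B)^j$, and the $\log\log B$ saving arises only after a careful Perron-type truncation of divisor sums during the Möbius inversion, analogous to but more delicate than in the Derenthal--Wilsch argument over $\QQ$ because of the presence of multiple ideal classes and two real dimensions per variable. A secondary difficulty is identifying the correct combinatorial constants $\pi^2/18$ and $11\pi^2/18$; this requires explicitly computing the volumes of the polytopes attached to the two different boundary divisors via the height $H_i$ on $\widetilde{S}$, and I expect those computations to be technical but mechanical once the torsor parameterisation is in place.
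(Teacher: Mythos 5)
Your proposal follows essentially the same route as the paper: minimal desingularisation, identification of $H_1,H_2$ as log-anticanonical heights, parameterisation by integral points on (twists of) the universal torsor, iterated summations with M\"obius-type treatment of the coprimality conditions, and a final polytope-volume computation giving $\pi^2/18$ and $11\pi^2/18$. The high-level structure and the final constants you predict are correct.

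There is, however, one genuine gap at the parameterisation step, and it is precisely the main new difficulty of the paper. You propose to take the Derenthal--Frei-style parameterisation over all $h_K^{\rk\Pic\widetilde{S}}=h_K^6$ tuples of ideal class representatives and encode the integrality condition as additional ``coprimality conditions''. When $h_K>1$ this does not work as stated: on the twist ${}_{\bm{C}}\mathcal{Y}$ the integrality condition for a boundary component $E_j$ is the ideal condition $I_j=\eta_j\mathcal{O}_j^{-1}=\ringofintegers$, i.e.\ $(\eta_j)=\mathcal{O}_j$. This forces $[\mathcal{O}_j]$ to be trivial, so it imposes relations among the $C_k$ (e.g.\ $C_1=C_4$, $C_2=C_5$ and $C_0$ determined for the boundary $D_1$), killing all but $h_K^{3}$ (resp.\ $h_K^{2}$) of the twists --- which is where the exponents $\rho_K^3$ and $\rho_K^2$ come from, rather than the $\rho_K^5$ of the rational-point count. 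Moreover, even on the surviving twists the condition $(\eta_j)=\mathcal{O}_j$ does not make $\eta_j$ a unit; one must pass to an isomorphic re-twist $\bm{C}'\in\mathcal{C}_i$ to obtain the clean parameterisation with $\eta_j\in\ringofintegers^\times$ for $E_j\subset\lvert D_i\rvert$, on which the height function simplifies and the summation machinery applies. The paper shows (Example~\ref{example:integral_no_subset_of_rational}) that after this re-twisting the set of integral points is \emph{not} a subset of the rational-point parameterisation, so one cannot simply restrict the Derenthal--Frei lattice description by extra divisibility conditions as your sketch suggests. A minor further discrepancy: the $\log\log B$ saving in the error term does not come from a Perron-type truncation but from the standard second-summation machinery (the analogue of \cite[Proposition 7.3]{derenthal14_compos}); also $\pi^2/18$ and $11\pi^2/18$ are $4\pi^2$ times the relevant polytope volumes $1/72$ and $11/72$, with the extra $\pi$-powers arising from the polar-coordinate change of the complex torsor variables.
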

	We obtain an analogous result over $\QQ$ (\Cref{theorem_over_Q}), which we will state and prove in \Cref{section: simplifications for the rational numbers}.
	
	\subsection{The expected asymptotic formula}\label{section:the expected asymptotic formula}
	Our asymptotic formulas for the number of integral points of bounded height should be interpreted on a minimal desingularisation $\pi\colon\widetilde{S}\rightarrow S$. Here, $\widetilde{S}$ is a \emph{weak del Pezzo surface}, that is, a smooth projective surface whose anticanonical bundle $\omega_{\widetilde{S}}^\vee$ is big and nef. Equivalently, weak del Pezzo surfaces are the smooth del Pezzo surfaces and the minimal desingularisations of del Pezzo surfaces with only ADE-singularities \cite{demazure80}. 
	Analogously to \cite{derenthal_wilsch_21}, we study a desingularisation $\widetilde{U}_i = \widetilde{S}\setminus D_i$ of $U_i$, where $D_i = \pi^{-1}(Z_i)$ is a reduced effective divisor with strict normal crossings, $i=1,2$. We use this to interpret the number of points on $\mathcal{U}_i = \mathcal{S}\setminus\mathcal{Z}_i$. When studying integral points, the log-anticanonical bundle $\omega_{\widetilde{S}}(D_i)^\vee$ replaces the anticanonical bundle. From this perspective, we can interpret \Cref{main_theorem} in the framework described in \cite{chambertloir_tschinkel_2010}.
	
	The minimal desingularisation $\widetilde{S}$ is obtained from $\PP_K^2$ by a chain of five blow-ups. We will see in \Cref{section: Passage to a universal torsor} that the same chain of blow-ups of $\PP^2_{\ringofintegers}$ results in an integral model $\mapmindesingmodel\colon\widetilde{\mathcal{S}}\rightarrow\mathcal{S}$. Then, $D_1$ is the divisor above $Q$.
	Let $\widetilde{U}_i,\,\widetilde{\mathcal{U}}_i$ be the complement of $D_i,\,\overline{D}_i$ in $\widetilde{S},\,\widetilde{\mathcal{S}}$, respectively, where $\overline{D}_i$ is the Zariski closure of $D_i$ in $\widetilde{\mathcal{S}}$. The complement $\widetilde{V}$ of all negative curves on $\widetilde{\mathcal{S}}$ is obtained as the preimage of the lines on $S$, that means, $\widetilde{V} = \pi^{-1}(V)$.
	
	We can reinterpret our counting problem on the minimal desingularisation as follows:
	\begin{equation*}
		N_i(B) = \#\lbrace \bm{x}\in \widetilde{\mathcal{U}}_i(\ringofintegers)\cap\widetilde{V}(K)\mid H_i(\pi(\bm{x}))\le B\rbrace.
	\end{equation*}
	In \Cref{height_function_on_desingularisation}, we will prove that $H_i\circ\mapmindesingmodel$ is a log-anticanonical height function on $\widetilde{\mathcal{U}}_i(\ringofintegers)\cap \widetilde{V}(K)$.
	
	For example as in \cite{chambertloir_tschinkel_2012,derenthal_wilsch_21}, we expect that
	\begin{equation}\label{N_i(B)_minimal_desing}
		N_i(B)= c_{i,\mathrm{fin}}c_{i,\infty} B(\log B)^{b_i-1}(1+o(1)),
	\end{equation}
	where the leading constant can be decomposed into a finite part $c_{i,\text{fin}}$ and an archimedean part $c_{i,\infty}$, with
	\begin{align}
		c_{i,\mathrm{fin}} &= \rho_K^{\rk(\Pic(\widetilde{U}_i))}\prod_\pprime\left(1-\frac{1}{\idealnorm{\pprime}}\right)^{\rk(\Pic(\widetilde{U}_i))}
		\tau_{(\widetilde{S},D_i),\pprime}(\widetilde{\mathcal{U}}_i(\localringofintegers)), \label{const_fin}\\
		c_{i,\infty} &= \frac{1}{\absvalue{\Delta_K}^{\dim(U_i)/2}}\sum_{A\in\mathcal{C}^{\max,0}(D)}\alpha_{i,A}\tau_{i,D_{A,\infty}}(D_{A}(K_\infty)),\label{const_infty}
	\end{align}
	and 
	\[b_i = \rk \Pic(\widetilde{U}_i) + \dim \mathcal{C}^\mathrm{an}_\CC(D_i)+1.\]
	Here, $\tau_{(\widetilde{S},D_i),\pprime}$ is a $\pprime$-adic Tamagawa measure, $\mathcal{C}^{\max,0}(D)$ denotes the set of faces $A$ of the (analytic) Clemens complex of maximal dimension, which correspond to the minimal strata $D_A$ of $D$. The constant $\alpha_{i,A}$ is some rational number and $\tau_{i,D_{A},\infty}$ is an archimedean Tamagawa measure. Moreover, $\dim \mathcal{C}_\CC^\mathrm{an}(D_i)+1$ is the maximal number of components of the boundary divisor $D_i$ having non-empty intersection (that means, that meet in the same point).
	For more details, we refer to \cite{derenthal_wilsch_21} and \Cref{section: The leading constant}, where we will describe and determine these constants precisely.
	
	We show in \Cref{section: The leading constant} that \Cref{main_theorem} coincides with the expectation \eqref{N_i(B)_minimal_desing}. In particular, it turns out that the product of the constants $c_{i,\text{fin}}$ and $c_{i,\infty}$ equals the constant computed in \Cref{main_theorem}.
	
	\subsection{Strategy of the paper}
	The paper is organised as follows. In \Cref{section: Passage to a universal torsor}, we deal with the parameterisation of the set of integral points on our surface by integral points on a universal torsor. We start by describing this universal torsor on the minimal desingularisation of $S$. The main difficulty here is to adapt Derenthal and Wilsch's method to the setting of class number greater than $1$. We can no longer choose our representatives for integral points on the model coprime, that means uniquely up to units. We will consider representatives of these integral points where the coordinates do not necessarily lie in $\ringofintegers$. Hence, it will be difficult to reduce them modulo all prime ideals in the ring of integers. This makes it harder to decide which representatives are integral points. 
	
	We can use \cite[Theorem 2.5]{frei_pieropan_15} to prove a first representation (\Cref{lemma_parameterisation_of_integer_points_via_universal_torsor}) for the set of integral points, in which ideals corresponding to the torsor variables that correspond to the boundary divisor have to coincide with the ring of integers $\ringofintegers$. This not necessarily implies that these torsor variables have to be units. 
	To obtain a similar result to \cite{derenthal_wilsch_21}, and to get a nicer representation of the set of integral points to make the counting problem easier, we choose different (but isomorphic) twists of our universal torsor. Then, we roughly obtain that the torsor variables corresponding to the boundary divisor must be units (\Cref{lemma_nicer_representation}).
	In fact, while the integral points in our first parameterisation are a subset of those parameterising the rational points in \cite{frei_pieropan_15}, we observe that this is no longer true in our second parameterisation (see also \Cref{example:integral_no_subset_of_rational}).
	
	Further, we show that our height functions $H_1$ and $H_2$ are log-anticanonical and give an explicit description of these by monomials in the Cox ring of log-anticanonical degree.
	This allows us to formulate an explicit counting problem on the universal torsor (\Cref{prop: N_i(B)}).
	
	In \Cref{section: summations}, we perform the summations to estimate the number of integral points on the universal torsor using analytic techniques. The first step is to approximate the sums over the torsor variables by integrals (\Cref{lemma_first_summation,lemma_remaining_summations} and \Cref{lemma:N_i(B)_with_V_0}). Most computations work similarly as in \cite{derenthal_wilsch_21}, hence we will be brief here. However, the transformation of the sum over the torsor variables into a sum over ideals is more complicated since one of the ideal classes is dependent on the others, which leads to an extra factor $h_K^{-1}$ (\Cref{lemma_transformation_eta_into_ideals}). The coprimality conditions lead to an Euler product of local densities, which agrees with  $c_{i,\text{fin}}$ up to a few constants depending on $K$ (\Cref{lemma_remaining_summations}). The missing constants appear in the remaining integral.
	To complete the proof of \Cref{main_theorem}, we need to transform the obtained integral into \[\frac{\pi^{\mathrm{rk}(\Pic(\widetilde{U}_i))}}{4}C_i\cdot B\left(\log B\right)^{\mathrm{rk}(\Pic(\widetilde{U}_i))},\] 
	where $C_i$ is the product of the volume of a polytope (which agrees with $\sum \alpha_{i,A}$) and a real density (which coincides with the archimedean Tamagawa numbers $\tau_{i,D_A,\infty}(D_{A}(\CC))$). This transformation works with a combination of the arguments in \cite{derenthal14_compos} and \cite{derenthal_wilsch_21}. We slightly change the integration area by producing negligible error terms (\Cref{lemma:change_the_integration_area_of_V_0} and \Cref{cor_V_0}), and then transform the complex integration variables into real ones by using polar coordinates (\Cref{def_C_i}).
	
	In \Cref{section: The leading constant}, we explicitly compute the expected leading constant discussed in \Cref{section:the expected asymptotic formula} and prove that \eqref{N_i(B)_minimal_desing} holds. 
	Finally, we sketch the proof of the analogue of \Cref{main_theorem} in the case of the rational numbers in \Cref{section: simplifications for the rational numbers}.

	\subsection{Notation}\label{section_notation}
	By $\Delta_K$ we denote the discriminant of $K$, by $R_K$ the regulator and $\omega_K$ denotes the number of roots of unity. Further, let\[\rho_K = \frac{2^{s_1}(2\pi)^{s_2}h_KR_K}{\omega_K\sqrt{\lvert\Delta_K\rvert}},\] 
	where $s_1$ is the number of real embeddings of $K$ and $s_2$ is the number of pairs of complex embeddings. For $K$ imaginary quadratic, we have $s_1=0$ and $s_2 =1$. We note that for imaginary quadratic fields $K$ and $K=\QQ$, it is always $R_K=1$.
	
	When we use Vinogradov's $\ll$-notation or Landau's $O$-notation, the implied constants may always depend on $K$. In cases where they may depend on other objects as well, we mention this, for example by writing $\ll_C$ or $O_C$ if the constant may depend on $C$.
	
	In addition, we denote by $\mathcal{I}_K$ the monoid of nonzero ideals of $\ringofintegers$. The symbol $\mathfrak{a}$ (respectively $\pprime$) always denotes an ideal (respectively nonzero prime ideal) of $\ringofintegers$, and $\pvalue{\mathfrak{a}}$ is the nonnegative integer such that $\pprime^{\pvalue{\mathfrak{a}}}\divides \mathfrak{a}$ and $\pprime^{\pvalue{\mathfrak{a}}+1} \nmid \mathfrak{a}$. We extend this in the usual way to fractional ideals (with $\pvalue{\lbrace0\rbrace}=\infty)$, and for $x\in K$, write $\pvalue{x}= \pvalue{x\ringofintegers}$ for the usual $\pprime$-adic exponential valuation. We denote by $\mathcal{O}_{K,\pprime}$ the ring of integers of the completion $K_\pprime$ of $K$ at $\pprime$.
	We equip the completions of $K$ with the norms $\norm{\cdot}_\omega$ such that
	\[\norm{x}_\omega = \absvalue{N_{K_\omega/\QQ_v}(x)}_v\]
	at a place $\omega$ lying above a place $v$ of $\QQ$ and such that $\absvalue{p}_p = 1/p$ on $\QQ_p$ and $\absvalue{\cdot}_\infty$ is the standard absolute value $\absvalue{\cdot}$ on $\RR$. In particular, we then have the convention $\norminf{\cdot} = \lvert \cdot \rvert^2$, where $\lvert\cdot\rvert$ is the usual complex absolute value.
	Lastly, for a divisor $D$ we write $\vert D\vert$ for the support of $D$.

	\subsection*{Acknowledgements}
	The author was partially supported by the Caroline Herschel Programme of Leibniz University Hannover, as well as a scholarship for a research stay abroad of the Graduiertenakademie of Leibniz University Hannover since part of the work was done while visiting the University of Bath. I wish to thank Daniel Loughran and the math departement for their hospitality at the University of Bath. Further, I am thankful to Marta Pieropan and Florian Wilsch for their useful remarks, suggestions for improvements and the fruitful discussions with them.

	\section{Passage to a universal torsor}\label{section: Passage to a universal torsor}
	Analogously to \cite{derenthal14_compos,frei_pieropan_15,derenthal_wilsch_21}, we use universal torsors to parameterise the set of integral points on $U_i\subseteq S$ by integral points on an affine hypersurface. We use the same notation and numbering as in \cite{derenthal14_compos}. 
	
	Let $\overline{K}$ be an algebraic closure of $K$, and let $\widetilde{S}_{\overline{K}}$ be the minimal desingularisation of $S_{\overline{K}}$ as in \cite{derenthal14_compos}. 
	The data in \cite[§\,3.4]{derenthal14_lms} shows that $\widetilde{S}_{\overline{K}}$ is obtained by a blowing-up of $\PP^2_{\overline{K}}$ in five points in almost general position with Picard group $\Pic(\widetilde{S}_{\overline{K}})$ isomorphic to $\ZZ^6$.
	The Cox ring of $\widetilde{S}_{\overline{K}}$ is the $\Pic(\widetilde{S}_{\overline{K}})$-graded $\overline{K}$-algebra 
	\[R_{\overline{K}} = \overline{K}[\eta_1,\ldots,\eta_9]/(\eta_1\eta_4^2\eta_7+\eta_3\eta_6^2\eta_8+\eta_5\eta_9),\]
	which is defined by nine generators and one homogeneous relation.
	For $i\in\lbrace1,\ldots,9\rbrace$, the generator $\eta_i$ has degree $[E_i]\in\Pic(\widetilde{S}_{\overline{K}})$, and the divisor classes $[E_i]$ are given as follows.
	Let $l_0,\ldots,l_5$ be the basis of $\Pic(\widetilde{S}_{\overline{K}})$ given in \cite{derenthal14_lms}. Then,  $l_0^2=1,l_i^2=-1$ for $1\le i\le5$, and $l_i. l_j=0$ for all $0\le i<j\le 5$ gives the intersection form. We have
	\begin{align}\label{degree_E_i}
		\begin{gathered}
			[E_1] = l_1 - l_4,\quad 
			[E_2] = l_0-l_1-l_2-l_3,\quad
			[E_3] = l_2-l_5,\\
			[E_4] = l_4,\quad
			[E_5] = l_3,\quad
			[E_6] = l_5,\quad
			[E_7] = l_0-l_1-l_4,\quad
			[E_8] = l_0-l_2-l_5,\\
			\text{and}\ [E_9] = l_0-l_3.
		\end{gathered}
	\end{align}
	We will see in this section that there is an ideal $J$ (see \eqref{polynomials f_i} for the construction) and a morphism $\rho$ such that $\rho\colon \overline{Y} = \Spec(R_{\overline{K}})\setminus V(J) \rightarrow{\widetilde{S}_{\overline{K}}}$ is a universal torsor. This gives us the correspondence $V(\eta_i) = \rho^{-1}(E_i)$.
	
	The extended Dynkin diagram in \Cref{fig:1-Dynkin-diagram} encodes the configuration of curves corresponding to generators of $\mathrm{Cox}(\widetilde{S}_{\overline{K}})$. There are $[E_j].[E_k]$ edges between the vertices corresponding to $E_j$ and $E_k$. We mark a vertex by a circle (respectively a box) when it corresponds to a ($-2$)-curve (respectively ($-1$)-curve). 
	
	Similarly as in \cite{derenthal_wilsch_21}, the sum of the ($-2$)-curves on $\widetilde{S}_{\overline{K}}$ above the singularity $Q$ on $S_{\overline{K}}$ corresponds to the divisor $D_1=E_1+E_2+E_3$. The $(-1)$-curves $E_4,E_5,E_6,E_7,\text{ and }  E_8$ are the strict transforms of the five lines $L_2,L_1,L_4,L_3,\text{ and }L_5$, respectively, which were defined in \eqref{lines_L_i}.
	Above the lines $L_2,L_3,L_4$ and $L_5$, respectively, lie the divisors
	\[D_2 = E_1+E_2+E_3+E_4,\ D_3 = E_7,\ D_4 = E_1+E_2+E_3+E_6\  \text{and}\ D_5 = E_8.\]
	As in \cite{derenthal_wilsch_21}, the preimage $\widetilde{V}\subset\widetilde{S}$ of $V$ is the complement of the negative curves $E_1,\ldots,E_8$.
	
	\begin{figure}[ht]
		\centering
		\begin{tikzpicture}[
			roundnode/.style={circle, draw=black, thick, minimum size=5mm},
			squarednode/.style={rectangle, draw=black, thick, minimum size=8mm},
			]
			%Nodes
			\node[squarednode]      (E_7)                              {$E_7$};
			\node[squarednode]      (E_4)               [right= 1.5cm of E_7]  {$E_4$};
			\node[roundnode]      (E_1)               [right= 1.5cm of E_4]  {$E_1$};
			\node[]      (E_9)               [below= 0.8cm of E_4]  {$E_9$};
			\node[squarednode]      (E_5)               [right= 1.5cm of E_9]  {$E_5$};
			\node[roundnode]      (E_2)               [right= 1.5cm of E_5]  {$E_2$};
			\node[squarednode]      (E_6)               [below= 0.8cm of E_9]  {$E_6$};
			\node[roundnode]      (E_3)               [right= 1.5cm of E_6]  {$E_3$};
			\node[squarednode]      (E_8)               [left= 1.5cm of E_6]  {$E_8$};
			
			%Lines
			\draw[-] (E_7.south) -- (E_8.north);
			\draw[-] (E_7.east) -- (E_4.west);
			\draw[-] (E_4.east) -- (E_1.west) node [midway, above] {$A_3$};
			\draw[-] (E_8.east) -- (E_6.west);
			\draw[-] (E_6.east) -- (E_3.west) node [midway, above] {$A_4$};
			\draw[-] (E_9.east) -- (E_5.west);
			\draw[-] (E_5.east) -- (E_2.west);
			\draw[-] (E_7.south east) -- (E_9.north west);
			\draw[-] (E_8.north east) -- (E_9.south west);
			\draw[-] (E_1.south east) -- (E_2.north west) node [midway, above] {$A_1$};
			\draw[-] (E_3.north east) -- (E_2.south west) node [midway, below] {$A_2$};
		\end{tikzpicture} 
		\caption{Configuration of the curves on $\widetilde{S}_{\overline{K}}$ and the faces $A_i$ of the Clemens complexes.}
		\label{fig:1-Dynkin-diagram}
	\end{figure}
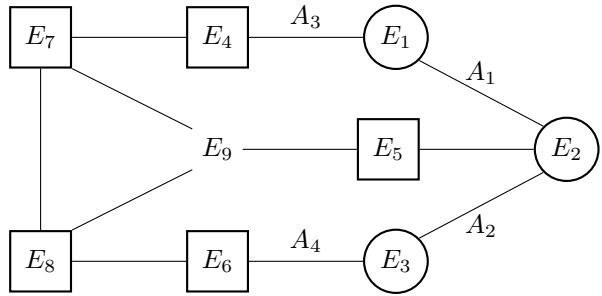

	\begin{remark}\label{remark: i=3 not countable}
		We recall that \cite[Theorem 10]{derenthal_wilsch_21} gives boundaries for del Pezzo surfaces of degree $d\le 4$ such that the minimal desingularisation together with the reduced effective boundary divisor $D$ is a weak del Pezzo pair, i.e.\ the log anticanonical bundle $\omega_{\widetilde{S}}(D)^\vee$ is big and nef.  For our counting problem, the possible boundaries are the singularity or one of the lines except $L_1$. Due to two symmetric cases ($L_2$ and $L_4$, and $L_3$ and $L_5$), there are three different types of boundaries that we can choose: the singularity $Q$, the line $L_2$, and the line $L_3$. 
		The line $L_1$ cannot be chosen as a boundary since the corresponding $(-1)$-curve on the minimal desingularisation of $S$ does not form a chain with the $(-2)$-curves corresponding to the singularity~$Q$. Hence, the log-anticanonical bundle $\omega_{\widetilde{S}}(D)^{\vee}$ of the corresponding divisor $D = E_1+E_2+E_3+E_5$ is not nef, as its intersection number with $E_2$ is $-1$.
		
		In this section, we parameterise integral points on a universal torsor in all three cases. Therefore, we briefly describe the setting for the third case, which is not mentioned yet.
		Analogously to the other two cases, we set $Z_3 = L_3$, let $\mathcal{Z}_3 = \overline{Z_3}$ in $\mathcal{S}$ and $\mathcal{U}_3 = \mathcal{S}\setminus \mathcal{Z}_3$. A point $\mathbf{x} = (x_0:\dots:x_4)\in S$ with $x_0,\ldots,x_4\in\mathcal{O}_K$ satisfying \eqref{corresponding_gcd_ideal} lies in $\mathcal{U}_3(\ringofintegers)$ if and only if  
		\begin{equation}\label{gcd_cond_i=3}
			x_0\ringofintegers + x_3\ringofintegers + x_4\ringofintegers = P_j
		\end{equation}
		for some $j=1,\dots,h_K$. The height is given by
		\begin{equation}\label{height_function_3}
			H_3(\mathbf{x}) = \frac{\max\{\norminf{x_0},\norminf{x_3}, \norminf{x_4}\}}{\idealnorm{x_0\ringofintegers + x_3\ringofintegers + x_4\ringofintegers}}.
		\end{equation}
		
		The method that we use to count integral points of bounded height with respect to the boundaries $Q$ and $L_2$, does not work with respect to the boundary $L_3$. Our attempts to use the same methods as for the other two cases fail in computing the error term of the first summation. 
		Hence, we will only parameterise the set of integral points for the third case, but not treat the resulting counting problem in the following section.
	\end{remark}

	\subsection{Integral Points on a Universal Torsor}
	This section is based on \cite{frei_pieropan_15}.
	The aim of this section is to apply \cite[Theorem 2.7]{frei_pieropan_15} to an $\ringofintegers$-model of a universal torsor of $\widetilde{S}_{\overline{K}}$ obtained by \cite[Construction 3.1]{frei_pieropan_15} to get a parameterisation of the set of $K$-integral points on the open subset $U_i$ via integral points on twisted torsors. 
	
	We describe the universal torsor $\overline{Y}\rightarrow\widetilde{S}_{\overline{K}}$ in the same turn as constructing an $\ringofintegers$-model of it, which is a universal torsor over a projective $\ringofintegers$-model of $\widetilde{S}_{\overline{K}}$. To this end, we consider the following monomials. For all $1\le i<j\le9$, let $A_{i,j}= \prod_{l\in\lbrace 1,\ldots,9\rbrace\setminus\{i,j\}}\eta_l$ and $A_{7,8,9}=\eta_1\cdots\eta_6$. Let $J$ be the ideal of $\mathrm{Cox}(\widetilde{S}_{\overline{K}})$ generated by the monomials
	\begin{gather}\label{polynomials f_i}
		A_{7,8,9},~A_{1,2},~A_{1,4},~A_{2,3},~A_{2,5},~A_{3,6},~A_{4,7},~A_{5,9},~\text{and}~A_{6,8}.
	\end{gather}
	These are obtained from the Dynkin-diagram in \Cref{fig:1-Dynkin-diagram} by considering the maximal subsets of vertices that are pairwise connected by at least one edge. We denote these polynomials by $f_1,\ldots,f_9$ and call $J$ the \emph{irrelevant ideal}.
	
	By \cite{derenthal14_lms}, we have $E_7\cap E_8\cap E_9\neq\emptyset$. Thus, the open subscheme $\overline{Y}$, which is defined to be the complement to $V(J)$ in $\Spec(\mathrm{Cox}(\widetilde{S}_{\overline{K}}))$, is a universal torsor of $\widetilde{S}_{\overline{K}}$ by \cite[Remark 6]{bourqui11}.
	Let
	\[R_{\ringofintegers}= \ringofintegers[\eta_1,\ldots,\eta_9]/(\eta_1\eta_4^2\eta_7 + \eta_3\eta_6^2\eta_8 + \eta_5\eta_9),\]
	$g = \eta_1\eta_4^2\eta_7 + \eta_3\eta_6^2\eta_8 + \eta_5\eta_9$, and let $\mathcal{Y}\rightarrow \widetilde{\mathcal{S}}$ be the $\ringofintegers$-model of the universal torsor $\overline{Y}\rightarrow \widetilde{S}_{\overline{K}}$ defined by $f_1,\ldots,f_9$ in \cite[Construction 3.1]{frei_pieropan_15}.
	We obtain an analogous result to \cite[Proposition 4.1]{frei_pieropan_15}:
	
	\begin{prop}\label{prop_Y_universal_torsor}
		\begin{enumerate}
			\item The scheme $\widetilde{\mathcal{S}}$ is smooth, projective, and with geometrically integral fibres over $\ringofintegers$.
			\item For every prime ideal $\pprime$ of $\ringofintegers$, the fibre $\widetilde{\mathcal{S}}_{k(\pprime)}$ is obtained from $\PP^2_{k(\pprime)}$ by a chain of 5 blowing-ups at $k(\pprime)$-points.
			\item The morphism $\mathcal{Y}\rightarrow \widetilde{\mathcal{S}}$ is a universal torsor under $\mathbb{G}^6_{m,\widetilde{\mathcal{S}}}$.
		\end{enumerate}
		\begin{proof}
			The proof is analogue to the proof of \cite[Proposition 4.1]{frei_pieropan_15} with some adaptions (see also \cite[Remark 4.4]{frei_pieropan_15}). The indices 1 and 6 have to be replaced by 5 and 2, respectively, i.e.\ for example the occurring sections $\eta_1$ and $\eta_6$ have to be replaced by $\eta_5$ and $\eta_2$, respectively.
			
			To prove that $\widetilde{\mathcal{S}}$ is smooth, as in \cite[Proposition 4.1]{frei_pieropan_15} we use \cite[Proposition 3.7]{frei_pieropan_15}. Therefore, we need to show that the Jacobian matrix $(\partial g/\partial \eta_i)_{1\le i\le9}$ has rank $1=9-2-6$ on $\mathcal{Y}(\overline{k(\pprime)})$, where $\overline{k(\pprime)}$ is an algebraic closure of the residue field $k(\pprime)$ of the prime ideal $\pprime$ in $\ringofintegers$. We have
			\[(\partial g/\partial\eta_i)_{1\le i\le 9} = 
			(\eta_4^2\eta_7,0,\eta_6^2\eta_8,2\eta_1\eta_4\eta_7,\eta_9,2\eta_3\eta_6\eta_8,\eta_1\eta_4^2,\eta_3\eta_6^2,\eta_5).\]
			Suppose $\eta_5=\eta_9=0$ on $\mathcal{Y}(\overline{k(\pprime)})$. Then, $f_i=0$ for all $1\le i\le9,i\neq8$, and $f_8 \neq 0$. This implies $\eta_j\neq0$ on $\mathcal{Y}(\overline{k(\pprime)})$ for all $j\neq 5,9$. Hence, the Jacobian matrix has rank 1 on $\mathcal{Y}(\overline{k(\pprime)})$. The rest of the proof remains unchanged.
		\end{proof}
	\end{prop}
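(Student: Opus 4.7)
The plan is to reduce essentially every assertion to a direct transcription of \cite[Proposition 4.1]{frei_pieropan_15} applied to our Cox ring datum, keeping careful track of the two index changes that our surface forces (the roles of the pairs $(1,6)$ and $(5,2)$ are swapped relative to \emph{loc.\ cit.}). The construction of $\widetilde{\mathcal S}$ as an iterated blow-up of $\PP^2_{\ringofintegers}$ in five sections is part of the input of \cite[Construction 3.1]{frei_pieropan_15}, which immediately yields that $\widetilde{\mathcal S}\to\Spec\ringofintegers$ is projective and that each geometric fibre arises from $\PP^2_{k(\pprime)}$ by the same blow-up sequence; this settles projectivity and the blow-up description of part~(2), and the geometric integrality of the fibres then follows because projective space and each successive blow-up at a section through a closed point of the special fibre preserve geometric integrality. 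The torsor statement in part~(3) is the output of the same construction combined with \cite[Proposition 3.7]{frei_pieropan_15}, once smoothness of $\widetilde{\mathcal S}$ is verified.

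Hence the real work is the smoothness check. Following the strategy sketched in the excerpt, I would apply the Jacobian criterion to $\mathcal Y \subset \Spec R_{\ringofintegers}$ cut out by the single relation $g = \eta_1\eta_4^2\eta_7 + \eta_3\eta_6^2\eta_8 + \eta_5\eta_9$, and verify that the Jacobian vector
\[
\left(\tfrac{\partial g}{\partial \eta_i}\right)_{1\le i\le 9} = (\eta_4^2\eta_7,\,0,\,\eta_6^2\eta_8,\,2\eta_1\eta_4\eta_7,\,\eta_9,\,2\eta_3\eta_6\eta_8,\,\eta_1\eta_4^2,\,\eta_3\eta_6^2,\,\eta_5)
\]
is nonzero at every geometric point of $\mathcal Y$ over every residue field $k(\pprime)$, which gives the required rank $1 = 9-2-6$. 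Points of $\mathcal Y$ are by definition those points outside $V(J)$, i.e.\ points at which at least one of the monomials $f_1,\dots,f_9$ listed in \eqref{polynomials f_i} is nonzero. The entries of the Jacobian contain, up to a harmless factor of $2$, several of these monomials (e.g.\ $\partial g/\partial \eta_5 = \eta_9$ and $\partial g/\partial \eta_9 = \eta_5$), so the only potentially problematic locus is the one where \emph{all} nine partials vanish.

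The key case-check is: suppose $\eta_5 = \eta_9 = 0$ at a geometric point of $\mathcal Y$. Then the relation $g = 0$ is automatic, and inspecting the nine monomials in \eqref{polynomials f_i} shows that $f_i = 0$ for all $i\neq 8$, while $f_8 = A_{6,8} = \eta_1\eta_2\eta_3\eta_4\eta_5\eta_7\eta_9 = 0$ as well unless we are careful; the only one that survives is $A_{6,8}$ interpreted via the correct indexing, so one obtains that the remaining $\eta_j$ (for $j\notin\{5,9\}$) must all be nonzero. But then entries such as $\eta_4^2\eta_7$ or $\eta_1\eta_4^2$ of the Jacobian are nonzero, so the rank is $1$ as required. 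For all other geometric points of $\mathcal Y$ one has $\eta_5\neq0$ or $\eta_9\neq0$, in which case the entries $\partial g/\partial \eta_9$ or $\partial g/\partial \eta_5$ already make the Jacobian nonzero. This exhausts the smoothness verification.

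The main obstacle that I anticipate is the bookkeeping of the reindexing between our Cox ring and the one of \cite{frei_pieropan_15}: the precise identification of which monomials $f_i$ survive in the critical case $\eta_5 = \eta_9 = 0$ must be checked against the extended Dynkin diagram in \Cref{fig:1-Dynkin-diagram}, because the assertion that all $\eta_j$ with $j\notin\{5,9\}$ are nonzero relies on reading off the \enquote{complementary} maximal cliques correctly. Once this combinatorial check is carried out, smoothness follows, \cite[Proposition 3.7]{frei_pieropan_15} supplies the torsor structure in part~(3), and the remaining assertions of parts~(1) and~(2) are formal, so no further argument is needed beyond that of \cite[Proposition 4.1]{frei_pieropan_15}.
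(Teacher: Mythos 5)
Your proposal follows the paper's proof essentially verbatim: reduce parts (1)--(3) to \cite[Proposition 4.1]{frei_pieropan_15}, and verify smoothness via the Jacobian criterion by showing that in the critical case $\eta_5=\eta_9=0$ exactly one generator of the irrelevant ideal survives, forcing $\eta_j\neq0$ for all $j\notin\{5,9\}$. The one slip is your identification of the surviving monomial: it is $A_{5,9}=\eta_1\eta_2\eta_3\eta_4\eta_6\eta_7\eta_8$ (which is $f_8$ in the paper's numbering), not $A_{6,8}$ (which contains both $\eta_5$ and $\eta_9$ and hence vanishes); with that corrected, your conclusion and the rest of the argument match the paper's.
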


	We have seen that the desingularisation $\widetilde{S}$ can be described as a certain sequence of five blowing-ups of $\PP^2_K$ in rational points \cite{derenthal14_compos}. The proof of \cite[Proposition 4.1]{frei_pieropan_15} with the slight modifications given in the proof of \Cref{prop_Y_universal_torsor} shows that the integral model $\widetilde{\mathcal{S}}$ can be defined by the same sequence of blowing-ups of $\PP^2_{\ringofintegers}$.
	\Cref{prop_Y_universal_torsor}(3) yields that $\mathcal{Y}$, which is defined before this proposition, is a $\mathbb{G}_{m,\ringofintegers}^6$-torsor over $\widetilde{\mathcal{S}}$ via a morphism $\rho:\mathcal{Y}\rightarrow\widetilde{\mathcal{S}}$.
	
	The action of $\mathbb{G}_{m,\ringofintegers}^6(\ringofintegers)= (\ringofintegers^\times)^6$ on $\mathcal{Y}(\ringofintegers)$ is given by \cite[(3.2)]{frei_pieropan_15} using the degrees from \eqref{degree_E_i}: an element \[((t_0,\ldots,t_5),(\eta_1,\ldots,\eta_9))\in(\ringofintegers^\times)^6\times\mathcal{Y}(\ringofintegers)\] maps to
	\begin{equation*}
		(t_1t_4^{-1}\eta_1, t_0t_1^{-1}t_2^{-1}t_3^{-1}\eta_2, t_2t_5^{-1}\eta_3,t_4\eta_4,t_3\eta_5, t_5\eta_6, t_0t_1^{-1}t_4^{-1}\eta_7,t_0t_2^{-1}t_5^{-1}\eta_8,t_0t_3^{-1}\eta_9).
	\end{equation*}
	
	Let $\rho\colon Y\rightarrow \widetilde{S}$ be the base change of the torsor morphism $\mathcal{Y}\rightarrow \widetilde{\mathcal{S}}$ from $\ringofintegers$ to $K$. Then, \cite[Remark 3.2]{frei_pieropan_15} yields that $\rho$ is a universal torsor of $\widetilde{S}$.
	As in \cite[Section 4]{frei_pieropan_15}, we obtain a morphism $\Psi\colon Y\rightarrow S$ which is the composition of $\rho$ and $\pi$ (see also \cite[Remark 4.4]{frei_pieropan_15} where Frei and Pieropan state that their constructions also work for the other examples in \cite{derenthal14_lms}).
	The map $\Psi$ is given by sending $(\eta_1,\ldots,\eta_9)\in Y(K)$ to the point
	\[(\eta_1^2\eta_2^2\eta_3\eta_4^2\eta_5\eta_7:
	\eta_1\eta_2^2\eta_3^2\eta_5\eta_6^2\eta_8:
	\eta_1^2\eta_2^3\eta_3^2\eta_4\eta_5^2\eta_6:
	\eta_1\eta_2\eta_3\eta_4\eta_6\eta_7\eta_8:
	\eta_7\eta_8\eta_9)\]
	in $S(K)\subseteq\PP^4(K)$,
	where the sections \begin{equation}\label{sections_L0}
		L_0 = \lbrace \eta_1^2\eta_2^2\eta_3\eta_4^2\eta_5\eta_7,
		\eta_1\eta_2^2\eta_3^2\eta_5\eta_6^2\eta_8,
		\eta_1^2\eta_2^3\eta_3^2\eta_4\eta_5^2\eta_6,
		\eta_1\eta_2\eta_3\eta_4\eta_6\eta_7\eta_8,\eta_7\eta_8\eta_9\rbrace
	\end{equation}
	have anticanonical degree.
	
	We can use this to give an explicit parameterisation of $U_i(K)$, for $i=1,2,3$, by integral points on twists of $\mathcal{Y}$. But first, we describe the preimage of $V$ inside the torsor.
	
	Recall that $V$ is defined as the complement of the lines in $S$ and that 
	\[S\setminus V = S\setminus\{x_0x_3=0\}.\]
	An easy computation shows that $\Psi^{-1}(S\setminus V) = \lbrace \eta_1\cdots\eta_8=0\rbrace$, and 
	\begin{equation}\label{preimage_Psi(V(K))}
		(\Psi^{-1}(V))(K) = Y(K)\cap \left((K^\times)^8\times K\right).
	\end{equation}
	
	Analogously to \cite{derenthal14_compos}, for any given 6-tuple $\bm{C}=(C_0,\ldots,C_5)$ of nonzero fractional ideals of $\mathcal{O}_K$ (for example $\bm{C}\in\mathcal{C}^6$) we define 
	\begin{equation*}
		u_\mathbf{C}= 
		\idealnorm{C_0^3C_1^{-1}\cdots C_5^{-1}},
	\end{equation*} 
	and $\mathcal{O}_j = \bm{C}^{\deg\eta_j}$ for $j=1,\dots,9$, that means,
	\begin{align}\label{def_O_i}
		\begin{array}{lll}
			\mathcal{O}_1= C_1C_4^{-1},
			& \mathcal{O}_2= C_0C_1^{-1}C_2^{-1}C_3^{-1},
			&\mathcal{O}_3= C_2C_5^{-1},\\
			\mathcal{O}_4= C_4,
			& \mathcal{O}_5= C_3,
			&\mathcal{O}_6= C_5,\\
			\mathcal{O}_7= C_0C_1^{-1}C_4^{-1},
			&\mathcal{O}_8= C_0C_2^{-1}C_5^{-1}, \ \text{and}
			&\mathcal{O}_9= C_0C_3^{-1}.
		\end{array}
	\end{align}
	Let
	\begin{align*}
		\mathcal{O}_{j*}= \begin{cases}
			\mathcal{O}_j^{\neq 0} & \text{if }j\in\{1,\ldots,8\},\\
			\mathcal{O}_j & \text{if } j=9.
		\end{cases}
	\end{align*}
	For $\eta_j\in\mathcal{O}_j$, we define 
	\begin{equation}\label{def_I_j}
		I_j =  I_j(\eta_j) =\eta_j\mathcal{O}_j^{-1}\subseteq\ringofintegers.
	\end{equation}
	For simplicity, let $\bm{\eta} = (\eta_1,\ldots,\eta_9)$. 
	Let ${}_{\bm{C}}\rho\colon {}_{\bm{C}}\mathcal{Y}\rightarrow \widetilde{\mathcal{S}}$ be the twist of $\mathcal{Y}$ as constructed in \cite[Definition 2.6]{frei_pieropan_15}. By \cite[Theorem 2.5(i)]{frei_pieropan_15} these twists ${}_{\bm{C}}\mathcal{Y}$ are different integral models of the $K$-variety $Y$.
	
	As in \cite[Section 4]{frei_pieropan_15}, we give an explicit parameterisation of rational points by lattice points.
	
	\begin{lemma}\label{lemma_parameterisation_of_rational_points_via_universal_torsor}
		For any given $\bm{C}\in\mathcal{C}^6$, the map ${}_{\bm{C}}\rho$ induces a $\omega_K^6$-to-$1$-correspondence
		\[\bigsqcup_{\bm{C}\in\mathcal{C}^6} {}_{\bm{C}}\mathcal{Y}(\ringofintegers)\cap (\Psi^{-1}(V))(K) \rightarrow \widetilde{V}(K).\]
		We further have that ${}_{\bm{C}}\mathcal{Y}(\ringofintegers)\cap(\Psi^{-1}(V))(K)$ is the set of all $\bm{\eta}\in\mathcal{O}_{1*}\times\cdots\times\mathcal{O}_{9*}$
		such that
		\begin{align}
			&\eta_1\eta_4^2\eta_7 + \eta_3\eta_6^2\eta_8 + \eta_5\eta_9 = 0,\text{ and} \label{torsor-equation_new}\\
			&I_i + I_j = \ringofintegers \text{ if }E_i\text{ and }E_j\text{ do not share an edge in \Cref{fig:1-Dynkin-diagram}.} \label{gcd-condition_eta}
		\end{align}
		\begin{proof}
			This proof is based on \cite[Lemma 4.3]{frei_pieropan_15}.
			Recall that $\mapmindesingmodel\colon\widetilde{\mathcal{S}}\rightarrow \mathcal{S}$ denotes the minimal desingularisation, which is a model of the desingularisation $\pi\colon\widetilde{S}_{\overline{K}}\rightarrow S_{\overline{K}}$ induced by the anticanonical sections \eqref{sections_L0} of $\widetilde{S}$. This morphism induces an isomorphism $\mapmindesingmodel^{-1}(V)\rightarrow V$.
			By \cite[Theorem 2.7(ii)]{frei_pieropan_15} the set of rational points on the open variety $\pi^{-1}(V) = \widetilde{V}$ can be written as a disjoint union
			\[(\mapmindesingmodel^{-1}(V))(K) = \bigsqcup_{\bm{C}\in\mathcal{C}^6}{}_{\bm{C}}\rho({}_{\bm{C}}\mathcal{Y}(\ringofintegers)\cap(\Psi^{-1}(V))(K)).\]
			
			Now, let $\bm{C}\in\mathcal{C}^6$. \cite[Theorem 2.7(iii)]{frei_pieropan_15} and \eqref{preimage_Psi(V(K))} yield that ${}_{\bm{C}}\mathcal{Y}(\ringofintegers)\cap(\Psi^{-1}(V))(K)$ is the set of all
			\[\bm{\eta}\in \mathcal{O}_{1^*}\times\dots\times\mathcal{O}_{9^*}\]
			satisfying \eqref{torsor-equation_new} and 
			\begin{equation}\label{equivalent_coprime_condition}
				\sum_{i=1}^{9}f_i(\mathbf{I}) = \ringofintegers.
			\end{equation}
			Here, for every $\bm{\eta}$ we set $\mathbf{I} = \mathbf{I}(\bm{\eta}) = (I_1,\dots,I_9)$. The $f_i$ are the polynomials defined in \eqref{polynomials f_i}, that is, for instance $f_1(\mathbf{I}) = I_1\cdots I_6$.
			Analogously to \cite[Proof of Lemma 4.3]{frei_pieropan_15} one shows that \eqref{equivalent_coprime_condition} is equivalent to the coprimality conditions \eqref{gcd-condition_eta}.
		\end{proof}
	\end{lemma}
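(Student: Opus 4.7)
The plan is to apply \cite[Theorem 2.7]{frei_pieropan_15} to the universal torsor $\mathcal{Y}\to\widetilde{\mathcal{S}}$ constructed in \Cref{prop_Y_universal_torsor}, specialised to the open subset $\mapmindesingmodel^{-1}(V)\subseteq\widetilde{S}$. Since $V$ is the complement of the lines of $S$ and in particular avoids the singularity $Q$, the desingularisation morphism restricts to an isomorphism $\mapmindesingmodel^{-1}(V)\xrightarrow{\sim} V$; hence $\widetilde{V}=\mapmindesingmodel^{-1}(V)$ and parameterising $\widetilde{V}(K)$ via the torsor is equivalent to parameterising $V(K)$.

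First I would invoke \cite[Theorem 2.7(ii)]{frei_pieropan_15} to obtain the disjoint-union decomposition
\[(\mapmindesingmodel^{-1}(V))(K) = \bigsqcup_{\bm{C}\in\mathcal{C}^6}{}_{\bm{C}}\rho\bigl({}_{\bm{C}}\mathcal{Y}(\ringofintegers)\cap(\Psi^{-1}(V))(K)\bigr).\]
The fibre of ${}_{\bm{C}}\rho$ over a $K$-point of $\widetilde{V}$ is a torsor under the group $\mathbb{G}_{m,\ringofintegers}^6(\ringofintegers)=(\ringofintegers^\times)^6$, which has cardinality $\omega_K^6$ for $K$ imaginary quadratic, giving the $\omega_K^6$-to-$1$ property. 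Next, \cite[Theorem 2.7(iii)]{frei_pieropan_15} combined with the explicit description \eqref{preimage_Psi(V(K))} of $\Psi^{-1}(V)(K)$ identifies ${}_{\bm{C}}\mathcal{Y}(\ringofintegers)\cap(\Psi^{-1}(V))(K)$ with the set of tuples $\bm{\eta}\in\mathcal{O}_{1*}\times\cdots\times\mathcal{O}_{9*}$ satisfying the torsor equation \eqref{torsor-equation_new} together with the irrelevant-ideal condition $\sum_{l=1}^{9}f_l(\mathbf{I})=\ringofintegers$.

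The main obstacle is translating this irrelevant-ideal condition into the explicit pairwise coprimality statement \eqref{gcd-condition_eta}. The generators $f_1,\ldots,f_9$ from \eqref{polynomials f_i} are by construction the products $\prod_{k\notin T}\eta_k$ where $T$ ranges over the maximal cliques of the extended Dynkin diagram in \Cref{fig:1-Dynkin-diagram}. I would work locally at each prime $\pprime$: the equality $\sum_l f_l(\mathbf{I})=\ringofintegers$ at $\pprime$ is equivalent to the existence of a maximal clique $T$ with $\{k:\pprime\mid I_k\}\subseteq T$, and since every clique sits inside some maximal clique, this is in turn equivalent to $\{k:\pprime\mid I_k\}$ itself being a clique, i.e.\ to the coprimality $I_i+I_j=\ringofintegers$ for every non-adjacent pair of vertices. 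This is exactly the combinatorial translation carried out in \cite[Proof of Lemma 4.3]{frei_pieropan_15}, specialised here to our specific diagram, and once set up it reduces to a routine graph-theoretic verification.
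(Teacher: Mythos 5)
Your proposal is correct and follows essentially the same route as the paper's proof: both apply \cite[Theorem 2.7(ii),(iii)]{frei_pieropan_15} over $\mapmindesingmodel^{-1}(V)\cong V$ to get the disjoint-union decomposition and the description by the torsor equation plus the irrelevant-ideal condition $\sum_{l}f_l(\mathbf{I})=\ringofintegers$, and then convert the latter into the pairwise coprimality conditions \eqref{gcd-condition_eta}. The only difference is that you spell out the prime-by-prime maximal-clique argument (which the paper delegates to \cite[Proof of Lemma 4.3]{frei_pieropan_15}), and that combinatorial translation is carried out correctly.
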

	
	With the use of the previous lemma, we can finally give an explicit parameterisation of integral points by lattice points: in addition to the ``coprimality conditions'' $I_j+I_k = \ringofintegers$, we get conditions $I_j=\ringofintegers$ for $E_j\subset\vert D_i\vert$.
	
	\begin{prop}\label{lemma_parameterisation_of_integer_points_via_universal_torsor}
		Let $i\in\lbrace1,2,3\rbrace$. For a given $\bm{C}\in\mathcal{C}^6$ we set ${}_{\bm{C}}\mathcal{Y}_{i}= {}_{\bm{C}}\rho^{-1}(\widetilde{\mathcal{U}}_i)\subset {}_{\bm{C}}\mathcal{Y}$.
		Then, ${}_{\bm{C}}\rho\colon {}_{\bm{C}}\mathcal{Y}_{i}\rightarrow \widetilde{\mathcal{U}}_i$ is a $\mathbb{G}_{m,\ringofintegers}^6$-torsor, which induces a $\omega_K^6$-to-$1$-correspondence
		\begin{equation*}
			\bigsqcup_{\substack{\bm{C}\in\mathcal{C}^6}}{}_{\bm{C}}\mathcal{Y}_{i}(\ringofintegers)\cap (\Psi^{-1}(V))(K) \rightarrow \widetilde{\mathcal{U}}_i(\ringofintegers)\cap \widetilde{V}(K).
		\end{equation*}
		Explicitly, we have
		\begin{equation*}
			{}_{\bm{C}}\mathcal{Y}_{i}(\ringofintegers)\cap (\Psi^{-1}(V))(K)
			=\lbrace (\eta_1,\dots,\eta_9)\in \mathcal{O}_{1*}\times\dots\times \mathcal{O}_{9*} \mid \eqref{torsor-equation_new},\eqref{gcd-condition_eta},
			\eqref{I_j=O_K}\rbrace,
		\end{equation*}
		where \eqref{torsor-equation_new} and \eqref{gcd-condition_eta} are given in the previous lemma, and
		\begin{equation}
			I_j=\ringofintegers \text{ if }E_j\subset\lvert D_i\rvert\label{I_j=O_K}.
		\end{equation}
	\end{prop}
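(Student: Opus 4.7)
The plan is to deduce the statement from \Cref{lemma_parameterisation_of_rational_points_via_universal_torsor} by analysing the additional integrality condition that passing from $\widetilde{V}(K)$ to $\widetilde{\mathcal{U}}_i(\ringofintegers) \cap \widetilde{V}(K)$ imposes on the torsor variables.

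First I would verify the torsor claim. Since ${}_{\bm{C}}\rho\colon{}_{\bm{C}}\mathcal{Y}\rightarrow\widetilde{\mathcal{S}}$ is a $\mathbb{G}_{m,\ringofintegers}^6$-torsor (by \Cref{prop_Y_universal_torsor}(3) together with \cite[Definition 2.6]{frei_pieropan_15}, which shows that the twist is again a universal torsor) and $\widetilde{\mathcal{U}}_i\hookrightarrow\widetilde{\mathcal{S}}$ is an open immersion, the base change ${}_{\bm{C}}\mathcal{Y}_{i}\rightarrow\widetilde{\mathcal{U}}_i$ is again a $\mathbb{G}_{m,\ringofintegers}^6$-torsor.

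Next I would argue the bijective correspondence. By \Cref{lemma_parameterisation_of_rational_points_via_universal_torsor}, the $\omega_K^6$-to-$1$-correspondence
\[\bigsqcup_{\bm{C}\in\mathcal{C}^6}{}_{\bm{C}}\mathcal{Y}(\ringofintegers)\cap(\Psi^{-1}(V))(K) \rightarrow \widetilde{V}(K)\]
is already established. Restricting both sides to the preimage of $\widetilde{\mathcal{U}}_i$ preserves the correspondence and the multiplicity, since the $(\ringofintegers^\times)^6$-action stabilises ${}_{\bm{C}}\mathcal{Y}_i(\ringofintegers)$ (multiplication by a unit does not change the ideal $I_j$). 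Thus the essential content is to identify ${}_{\bm{C}}\mathcal{Y}_i(\ringofintegers)\cap(\Psi^{-1}(V))(K)$ inside ${}_{\bm{C}}\mathcal{Y}(\ringofintegers)\cap(\Psi^{-1}(V))(K)$ as exactly those $\bm{\eta}$ satisfying \eqref{I_j=O_K}.

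For this identification, I would use that $\widetilde{\mathcal{U}}_i=\widetilde{\mathcal{S}}\setminus\overline{D}_i$, so a point $\bm{\eta}\in{}_{\bm{C}}\mathcal{Y}(\ringofintegers)\cap(\Psi^{-1}(V))(K)$ lies in ${}_{\bm{C}}\mathcal{Y}_i(\ringofintegers)$ if and only if its image under ${}_{\bm{C}}\rho$ does not meet $\overline{D}_i$ in any residue field $\ringofintegers/\pprime$. Under the correspondence $V(\eta_j)={}_{\bm{C}}\rho^{-1}(E_j)$ coming from the Cox ring description (cf.\ the line above \Cref{remark: i=3 not countable} and the construction of the twist in \cite[Construction 3.1]{frei_pieropan_15}), reduction of $\bm{\eta}$ lands on $E_j\pmod{\pprime}$ exactly when $\pvalue{\eta_j\mathcal{O}_j^{-1}}=\pvalue{I_j}>0$. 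Since $D_i$ is the sum of the $E_j$ with $E_j\subset|D_i|$, the integrality condition is therefore equivalent to $\pprime\nmid I_j$ for all $\pprime$ and all such $j$, i.e.\ to \eqref{I_j=O_K}.

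The main obstacle I anticipate is a bookkeeping one: keeping track of which twist ${}_{\bm{C}}\mathcal{Y}$ one is on and verifying that the identification $V(\eta_j)={}_{\bm{C}}\rho^{-1}(E_j)$ persists after twisting, so that integrality modulo every $\pprime$ is correctly read off from the ideals $I_j=\eta_j\mathcal{O}_j^{-1}$ rather than from $\eta_j$ itself. This is built into the construction of \cite[Construction 3.1]{frei_pieropan_15}, but it must be invoked carefully, since the coordinates $\eta_j$ need not lie in $\ringofintegers$ when the class $\mathcal{O}_j$ is nontrivial; beyond this, the argument reduces to the already-verified coprimality part of \Cref{lemma_parameterisation_of_rational_points_via_universal_torsor}.
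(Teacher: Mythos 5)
Your overall strategy matches the paper's: establish the torsor claim by restriction, inherit the correspondence from \Cref{lemma_parameterisation_of_rational_points_via_universal_torsor}, and then identify the integral points of ${}_{\bm{C}}\mathcal{Y}_i$ inside those of ${}_{\bm{C}}\mathcal{Y}$ by the condition $I_j=\ringofintegers$ for $E_j\subset\lvert D_i\rvert$. The first two steps are fine as you describe them.

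The gap is in the step you flag as "the main obstacle" and then dismiss as "built into the construction of \cite[Construction 3.1]{frei_pieropan_15}." It is not. What you need is the identification of the twisted integral model of $Y\setminus V(\eta_j)$, namely ${}_{\bm{C}}\widetilde{\mathcal{Y}}_j={}_{\bm{C}}\mathcal{Y}\setminus V({}_{\bm{C}}(\eta_j))$ in the sense of \cite[Definition 2.4]{frei_pieropan_15}, with the complement of the Zariski closure $\overline{{}_{\bm{C}}\rho^{-1}(E_j)}$ in ${}_{\bm{C}}\mathcal{Y}$; only after that can "does not meet $\overline{D}_i$ modulo any $\pprime$" be read off from the ideals $I_j$. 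One inclusion is formal, but the reverse inclusion is exactly where the paper does real work: because $\Pic(\Spec\ringofintegers)$ is nontrivial for $h_K>1$, the relevant statement \cite[Theorem 2.5(ii)]{frei_pieropan_15} cannot be applied globally, and the paper constructs an affine open cover $\mathcal{W}_1\cup\mathcal{W}_2$ of $\Spec(\ringofintegers)$ by Picard-trivial opens (choosing, via Chebotarev applied to the Hilbert class field, two distinct prime ideals in each ideal class and removing them) to conclude that the inclusion is an isomorphism. A second, smaller omission: once this identification is in place, the integrality condition on ${}_{\bm{C}}\widetilde{\mathcal{Y}}_j(\ringofintegers)$ is a priori the condition $\sum_{m\in\ZZ^6}\sum_{f\in(\eta_j)_m}f(\bm{\eta})\bm{C}^{-m}=\ringofintegers$ from \cite[Theorem 2.5(iii)]{frei_pieropan_15}, and one must observe that it suffices to test the generator $\eta_j$, giving $\eta_j\bm{C}^{-\deg(\eta_j)}=I_j=\ringofintegers$. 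Your heuristic "reduction lands on $E_j\bmod\pprime$ iff $\pvalue{I_j}>0$" is the right intuition, but without the covering argument it is an assertion rather than a proof, precisely because the coordinates $\eta_j$ are not elements of $\ringofintegers$ when $\mathcal{O}_j$ is a nontrivial fractional ideal.
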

	\begin{proof}
		The restriction of $\rho$ to the open subscheme $\widetilde{\mathcal{U}}_i$ is a $\mathbb{G}_{m,\ringofintegers}^6$-torsor, since $\rho$ itself is a\linebreak $\mathbb{G}_{m,\ringofintegers}^6$-torsor.
		\Cref{lemma_parameterisation_of_rational_points_via_universal_torsor}, the fact that $\widetilde{V} = \pi^{-1}(V)$, and $\Psi = \pi\circ\rho$ give us the stated correspondence.

		For simplicity, we first study what happens if we remove an irreducible divisor $E_j$ from $\widetilde{S}$. Define \[Y_j = Y\setminus \rho^{-1}(E_j) = Y\setminus V(\eta_j)\] 
		for $j=1,\ldots,8$.
		Note that the second equality holds due to \cite[Proposition 1.6.2.1]{ADHL15} (or \cite[Proposition 1.5.3.6]{ADHL15}). For each $\bm{C}\in\mathcal{C}^6$ we shall also need the twists 
		\[{}_{\bm{C}}\widetilde{\mathcal{Y}}_j = {}_{\bm{C}}\mathcal{Y}\setminus V({}_{\bm{C}}(\eta_j))\] in the sense of \cite[Definition 2.4]{frei_pieropan_15}. 
		Now, \begin{equation}\label{eq:different_characterisation_twist_Y_i}
			{}_{\bm{C}}\widetilde{\mathcal{Y}}_j = {}_{\bm{C}}\mathcal{Y}\setminus \overline{{}_{\bm{C}}\rho^{-1}(E_j)}.\end{equation} 
		Indeed, $V({}_{\bm{C}}(\eta_j))\subset{}_{\bm{C}}\mathcal{Y}$ is a closed set containing $V(\eta_j)\subset Y = {}_{\bm{C}}\mathcal{Y}_K$ (see \cite[Theorem 2.5(i)]{frei_pieropan_15}) and $\overline{{}_{\bm{C}}\rho^{-1}(E_j)}$ is the smallest closed subset of ${}_{\bm{C}}\mathcal{Y}$ that contains $\rho^{-1}(E_j) = V(\eta_j)$. Thus, \[\overline{{}_{\bm{C}}\rho^{-1}(E_j)}\subset V({}_{\bm{C}}(\eta_j)).\] 
		Consider the morphism \[\varphi\colon \overline{{}_{\bm{C}}\rho^{-1}(E_j)}\hookrightarrow V({}_{\bm{C}}(\eta_j))\]
		in ${}_{\bm{C}}\mathcal{Y}$.
		For each class $[P_i]$ of the class group $\mathrm{Cl}_K$ of $K$, choose two prime ideals $\pprime_j\neq \mathfrak{q}_j$ with $[\pprime_j] = [\mathfrak{q}_j] = [P_j]$, $j=1,\dots, h_K$.
		Indeed, applying Chebotarev's density theorem to the Hilbert class field of $K$ yields infinitely many prime ideals in every class $[P_i]\in\mathrm{Cl}_K$.
		Then, \begin{align*}
			\mathcal{W}_1 &= \Spec(\ringofintegers)\setminus \{\pprime_1,\dots,\pprime_{h_K}\}\quad\text{and}\\
			\mathcal{W}_1 &= \Spec(\ringofintegers)\setminus \{\mathfrak{q}_1,\dots,\mathfrak{q}_{h_K}\}
		\end{align*}
		cover $\Spec(\ringofintegers)$ by construction, and the sequences
		\begin{align*}
			\ZZ\pprime_1\oplus\dots\oplus\ZZ\pprime_{h_K}&\to \Pic\Spec(\ringofintegers) \rightarrow\Pic\mathcal{W}_1\rightarrow 0\\
			\ZZ\mathfrak{q}_1\oplus\dots\oplus\ZZ\mathfrak{q}_{h_K}&\to \Pic\Spec(\ringofintegers) \rightarrow\Pic\mathcal{W}_2\rightarrow 0
		\end{align*}
		are exact. As the morphisms on the left are surjective, 
		both Picard groups $\Pic\mathcal{W}_i$ vanish.
		Now, \cite[Theorem 2.5.(ii)]{frei_pieropan_15} applied to the affine open covering $\mathcal{W}_1\cup\mathcal{W}_2$ of $\Spec(\ringofintegers)$ yields that $\varphi$ is an isomorphism on this open covering and hence on $\Spec(\ringofintegers)$. The identity \eqref{eq:different_characterisation_twist_Y_i} follows.

		Let $(\eta_j)_m$ denote the degree-$m$-part of the ideal $(\eta_j)$. Due to \cite[Theorem 2.5(iii)]{frei_pieropan_15} and \Cref{lemma_parameterisation_of_rational_points_via_universal_torsor}, we obtain
		\[{}_{\bm{C}}\widetilde{\mathcal{Y}}_j(\ringofintegers) = \{\bm{\eta}\in {}_{\bm{C}}\mathcal{Y}(\ringofintegers)\mid \eqref{eqn_defining_condition_integral_points} \},\]
		where
		\begin{equation}\label{eqn_defining_condition_integral_points}
			\sum_{m\in\ZZ^6}\sum_{f\in(\eta_j)_m} f(\bm{\eta})\bm{C}^{-m} = \ringofintegers.
		\end{equation}
		It suffices to consider the generator $\eta_j$ of the ideal $(\eta_j)$ on the left hand side of \eqref{eqn_defining_condition_integral_points}. Thus, this condition is equivalent to $\eta_j\bm{C}^{-\deg(\eta_j)} = \ringofintegers$, that is, to $I_j = \ringofintegers$ by definition of the $\mathcal{O}_j$ and $I_j$, see \eqref{def_O_i} and \eqref{def_I_j}.
		
		Now, 
		\[{}_{\bm{C}} \mathcal{Y}_i = \bigcap_{\substack{j\\E_j\subset\vert D_i\vert}} {}_{\bm{C}} \widetilde{\mathcal{Y}}_j.\]
		Therefore, we obtain
		\[ {}_{\bm{C}}\mathcal{Y}_i(\ringofintegers) = \bigcap_{\substack{j\\E_j\subset\vert D_i\vert}} {}_{\bm{C}}\widetilde{\mathcal{Y}}_j(\ringofintegers) = \{\bm{\eta}\in{}_{\bm{C}}\mathcal{Y}(\ringofintegers)\mid I_j = \ringofintegers~\text{for all}~ j\text{ with }E_j\subset\vert D_i\vert\},\]
		which together with \Cref{lemma_parameterisation_of_rational_points_via_universal_torsor} proves the lemma.
	\end{proof}
	
	\begin{remark}
		In comparison to the abstract definition of integral points on the universal torsor in \cite[Definition 2.4]{frei_pieropan_15}, there is also a more elementary and intuitive characterisation of integral points. 
		Let $\bm{\eta}$ be an integral point in ${}_{\bm{C}}\mathcal{Y}(\ringofintegers)\cap(\Psi^{-1}(V))(K)$, that is,  $(\eta_1,\dots,\eta_9)\in\mathcal{O}_{1*}\times\cdots\times\mathcal{O}_{9*}$ such that \eqref{torsor-equation_new} and \eqref{gcd-condition_eta} hold, due to \Cref{lemma_parameterisation_of_rational_points_via_universal_torsor}. For $i=1,2,3$, computations show that the point $(\eta_1,\dots,\eta_9)$ lies in ${}_{\bm{C}}\mathcal{Y}_i(\ringofintegers)$ if and only if for every prime ideal $\pprime\subseteq\ringofintegers$ there exists an element $\bm{t}\in\mathbb{G}_m^6(K)$ such that $\bm{t}\bm{\eta} = (\eta_1',\dots,\eta_9')$ satisfies 
		\begin{align}\label{integrality_conditions_mod_p}
			\begin{split}
				& v_\pprime(\eta_j')\ge 0 \text{ for all } j=1,\ldots,9, \\
				&\eta_j'\not\equiv 0 \;\bmod\;\mathfrak{p}\quad
				\begin{cases}
					\text{ for }j=1,2,3 &\text{if }i=1,\\
					\text{ for } j=1,2,3,4 &\text{if } i=2,\\
					\text{ for } j=7 &\text{if } i=3,
				\end{cases}\quad\text{and}\\
				& (\eta_j'\;\bmod\;\mathfrak{p},\eta_k'\;\bmod\;\mathfrak{p})\not\equiv(0,0) \text{ for all } E_j,E_k \text{ that do not share an edge in \Cref{fig:1-Dynkin-diagram}.}
			\end{split}
		\end{align}
		The basic approach to prove this is to translate \eqref{integrality_conditions_mod_p} into a system of linear equations and inequalities by using the $\pprime$-adic valuation $v_\pprime$ of $\eta_j$ and $t_j$, which then can be solved by using linear algebra.
	\end{remark}

	To get a nicer representation of the set of integral points, which makes the counting easier, we choose a different (but isomorphic) twist $_{\bm{C}'}\mathcal{Y}$ of the universal torsor $\mathcal{Y}$. To this end, we define
	\begin{align*}
		\mathcal{C}_1 &= \{(C_0,\dots,C_5)\mid C_3,C_4,C_5\in\mathcal{C}, C_0 = C_3C_4C_5, C_1 = C_4, C_2 = C_5\},\\
		\mathcal{C}_2 &= \{(C_0,\dots,C_5)\mid C_3,C_5\in\mathcal{C}, C_0 = C_3C_5, C_1 = C_4 = \ringofintegers, C_2 = C_5\},\quad\text{and}\\
		\mathcal{C}_3 &= \{(C_0,\dots,C_5)\mid C_1,\dots,C_5\in\mathcal{C}, C_0 = C_1C_4\}.
	\end{align*}
	We note that $_{\bm{C}}\mathcal{Y}_i$, defined in \Cref{lemma_parameterisation_of_integer_points_via_universal_torsor}, can be defined more generally for any $6$-tuple $\bm{C}$ of nonzero fractional ideals of $\ringofintegers$ (see also \cite[Definition 2.2]{frei_pieropan_15}), that means, not all coordinates of $\bm{C}$ have to be elements of $\mathcal{C}$. 
	We obtain the following nicer representation.
	
	\begin{prop}\label{lemma_nicer_representation}
		Let $i\in\lbrace1,\dots,3\rbrace$. Let $\bm{C}'\in\mathcal{C}_i$. 
		Then, ${}_{\bm{C}'}\rho\colon {}_{\bm{C}'}\mathcal{Y}_{i}\rightarrow \widetilde{\mathcal{U}}_i$ is a $\mathbb{G}_{m,\ringofintegers}^6$-torsor, which induces a $\omega_K^6$-to-$1$-correspondence
		\begin{equation*}
			\bigsqcup_{\substack{\bm{C}'\in\mathcal{C}_i}}{}_{\bm{C}'}\mathcal{Y}_{i}(\ringofintegers)\cap (\Psi^{-1}(V))(K) \rightarrow \widetilde{\mathcal{U}}_i(\ringofintegers)\cap \widetilde{V}(K).
		\end{equation*}
		Explicitly, we have
		\begin{equation*}
			{}_{\bm{C}'}\mathcal{Y}_{i}(\ringofintegers)\cap (\Psi^{-1}(V))(K)
			=\lbrace (\eta_1,\dots,\eta_9)\in \mathcal{O}_{1*}\times\dots\times \mathcal{O}_{9*} \mid \eqref{torsor-equation_new},\eqref{gcd-condition_eta},
			\eqref{uni-condition}\rbrace,
		\end{equation*}
		where \eqref{torsor-equation_new} and \eqref{gcd-condition_eta} are given in \Cref{lemma_parameterisation_of_rational_points_via_universal_torsor}, and
		\begin{equation}
			\eta_j\in\ringofintegers^\times \text{ if }E_j\subset\lvert D_i\rvert. \label{uni-condition}
		\end{equation}
		\begin{proof}
			The fact that ${}_{\bm{C}'}\rho$ is a $\mathbb{G}_{m,\ringofintegers}^6$-torsor is proven analogously to the previous proposition. To prove the stated correspondence and representation of integral points, let $\bm{C}\in\mathcal{C}^6$. 
			
			We start with the case $i=1$. The previous proposition gives us that an integral point $\bm{\eta}\in {}_{\bm{C}}\mathcal{Y}(\ringofintegers)\cap(\Psi^{-1}(V))(K)$ lies in ${}_{\bm{C}}\mathcal{Y}_{1}(\ringofintegers)\cap (\Psi^{-1}(V))(K)$ if and only $I_1 = I_2 = I_3 = \ringofintegers$.
			By \eqref{def_I_j}, we have $\eta_1\mathcal{O}_1^{-1} = \ringofintegers$, and equivalently $(\eta_1) = \mathcal{O}_1$.
			Since $(\eta_1)$ is a principal ideal, by \eqref{def_O_i} this is only possible if $[C_1C_4^{-1}] = [\ringofintegers]$. Analogously, we obtain $(\eta_2) = \mathcal{O}_2$, which implies $[C_0C_1^{-1}C_2^{-1}C_3^{-1}] = [\ringofintegers]$, as well as $(\eta_3) = \mathcal{O}_3$, which implies $[C_2C_5^{-1}]=[\ringofintegers]$.
			Since we have chosen only one ideal in each ideal class, we obtain $C_1=C_4$ and $C_2=C_5$. Hence, $\eta_1$ and $\eta_3$ have to be elements in $\ringofintegers^\times$.
			Further, $C_0$ is uniquely determined by $[C_0] = [C_3C_4C_5]$. 
			Due to \cite[Proposition 2.5(iv)]{frei_pieropan_15} we can consider the to $_{\bm{C}}\mathcal{Y}$ isomorphic twist $_{\bm{C}'}\mathcal{Y}$ with $\bm{C}' = (C_3C_4C_5,C_4,C_5,C_3,C_4,C_5)$, that means we can replace $C_0$ with $C_3C_4C_5$ and $\bm{\eta}$ maps to itself. In this twist, we obtain $(\eta_2) = \ringofintegers$, and therefore $\eta_2\in\ringofintegers^\times$. By construction, the image of this twist in $\widetilde{\mathcal{U}}_1(\ringofintegers)\cap\widetilde{V}(K)$ remains unchanged. Therefore, together with \Cref{lemma_parameterisation_of_integer_points_via_universal_torsor} the stated correspondence and representation of integral points follows. We only have to take the disjoint union over $\bm{C}'\in\mathcal{C}_1\cong \mathcal{C}^{3}$, as we have shown that the remaining twists do not contain any integral points.
			
			In the case $i=2$, additionally to the case $i=1$, we obtain the condition $[C_4]=[\ringofintegers]$ by $I_4 = \ringofintegers$, which implies $C_1=C_4=\ringofintegers$. Again, we consider an isomorphic twist $_{\bm{C}'}\mathcal{Y}$ of $_{\bm{C}}\mathcal{Y}$ by choosing $\bm{C}' = (C_3C_5,\ringofintegers,C_5,C_3,\ringofintegers,C_5)$. 
			
			For an integral point $\bm{\eta}$ in ${}_{\bm{C}}\mathcal{Y}_{3}(\ringofintegers)\cap (\Psi^{-1}(V))(K)$, the previous proposition gives us $I_7 = \ringofintegers$. Thus, $[C_0C_1^{-1}C_4^{-1}]=[\ringofintegers]$, or equivalently $[C_0] = [C_1C_4]$.
			By choosing the isomorphic twist $_{\bm{C}'}\mathcal{Y}$ of $_{\bm{C}}\mathcal{Y}$ with $\bm{C}' = (C_1C_4, C_1,C_2,C_3,C_4,C_5)$, we get rid of $C_0$ and the lemma follows.
		\end{proof}
	\end{prop}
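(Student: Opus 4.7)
The plan is to deduce this proposition directly from \Cref{lemma_parameterisation_of_integer_points_via_universal_torsor} by reinterpreting the condition $I_j = \ringofintegers$ in class-group language and then replacing each admissible tuple $\bm{C}\in\mathcal{C}^6$ by an isomorphic twist coming from $\mathcal{C}_i$. Since $I_j = \eta_j\mathcal{O}_j^{-1}$ by \eqref{def_I_j}, the equality $I_j = \ringofintegers$ is equivalent to $(\eta_j) = \mathcal{O}_j$ as fractional ideals. Because $(\eta_j)$ is principal, this forces $[\mathcal{O}_j] = [\ringofintegers]$, and the formulas \eqref{def_O_i} then translate this into explicit equalities of ideal classes among $C_0,\dots,C_5$. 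Since $\mathcal{C}$ contains exactly one representative per class, these class equalities become equalities of ideals among the $C_k\in\mathcal{C}$, and they also determine $[C_0]$ in terms of the remaining components.

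Carrying this out case by case: for $i=1$ the divisor $D_1 = E_1+E_2+E_3$ gives the three conditions $[C_1C_4^{-1}]=[C_0C_1^{-1}C_2^{-1}C_3^{-1}]=[C_2C_5^{-1}]=[\ringofintegers]$, forcing $C_1 = C_4$, $C_2 = C_5$, and $[C_0] = [C_3C_4C_5]$, so that $C_3,C_4,C_5\in\mathcal{C}$ are the only free parameters; for $i=2$ the additional constraint $E_4\subset |D_2|$ gives $[C_4]=[\ringofintegers]$, hence $C_1=C_4=\ringofintegers$ and $[C_0]=[C_3C_5]$; for $i=3$ the single constraint $E_7\subset|D_3|$ yields $[C_0] = [C_1C_4]$ with $C_1,\dots,C_5\in\mathcal{C}$ free. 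In each case this produces a natural bijection between the admissible tuples in $\mathcal{C}^6$ and the tuples in $\mathcal{C}_i$, obtained by replacing the canonical representative $C_0\in\mathcal{C}$ by the literal product of the remaining ideals (and, for $i=2$, also replacing $C_1$ and $C_4$ by $\ringofintegers$ itself).

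The point of this replacement is that, while $I_j=\ringofintegers$ only says $(\eta_j) = \mathcal{O}_j$ in the original twist, passing from $\bm{C}\in\mathcal{C}^6$ to the corresponding $\bm{C}'\in\mathcal{C}_i$ makes $\mathcal{O}_j = \ringofintegers$ as literal ideals for every $j$ with $E_j\subset |D_i|$, so the condition collapses to the desired $\eta_j\in\ringofintegers^\times$. Since $\bm{C}$ and $\bm{C}'$ have the same componentwise ideal classes, \cite[Proposition 2.5(iv)]{frei_pieropan_15} provides an isomorphism ${}_{\bm{C}}\mathcal{Y}\cong {}_{\bm{C}'}\mathcal{Y}$ of $\ringofintegers$-models compatible with the projections to $\widetilde{\mathcal{S}}$; thus the torsor statement, the $\omega_K^6$-to-$1$ correspondence, and the conditions \eqref{torsor-equation_new} and \eqref{gcd-condition_eta} transfer verbatim from \Cref{lemma_parameterisation_of_integer_points_via_universal_torsor}, while the $I_j=\ringofintegers$ clauses become \eqref{uni-condition}. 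The main obstacle I expect is the bookkeeping of distinguishing literal ideal equalities from equalities of ideal classes: one needs to verify carefully that the isomorphism of twists truly translates the conditions $I_j = \ringofintegers$ into $\eta_j\in\ringofintegers^\times$, and that the map from the admissible subset of $\mathcal{C}^6$ to $\mathcal{C}_i$ is indeed a bijection so that no twist is lost or counted twice in the disjoint union.
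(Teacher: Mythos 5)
Your proposal is correct and follows essentially the same route as the paper: translate $I_j=\ringofintegers$ into $(\eta_j)=\mathcal{O}_j$, deduce the class-group constraints on $\bm{C}$ via \eqref{def_O_i}, and then pass to the isomorphic twist with $\bm{C}'\in\mathcal{C}_i$ using \cite[Proposition 2.5(iv)]{frei_pieropan_15} so that the relevant $\mathcal{O}_j$ become literally $\ringofintegers$ and the condition collapses to $\eta_j\in\ringofintegers^\times$. The only cosmetic difference is that for $i=2$ the equalities $C_1=C_4=\ringofintegers$ already hold literally (since $\ringofintegers\in\mathcal{C}$ represents the trivial class), so only $C_0$ needs to be replaced in the twist.
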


	\begin{example}\label{example:integral_no_subset_of_rational}
		We want to emphasise that the parameterisation of integral points on the universal torsor $\mathcal{Y}$ in \Cref{lemma_parameterisation_of_integer_points_via_universal_torsor} contains points $(\eta_1,\dots,\eta_9)$ with $\eta_j\not\in\ringofintegers^\times$ for $E_j\subset\vert D_i\vert$, so that the nicer parameterisation in \Cref{lemma_nicer_representation} really changes the set of integral points. To this end, we consider an explicit example.
		
		Let $K = \QQ(\alpha)$ with $\alpha = \sqrt{-5}$ be a number field of class number $h_K = 2$ with ring of integers $\ringofintegers = \ZZ[\alpha]$. Take $\mathfrak{p} = (2,1+\alpha)$ and let $\mathcal{C} = \{\ringofintegers, \mathfrak{p}\}$. 
		For $\bm{C} = (\mathfrak{p},\dots,\mathfrak{p})$ and $i=1$, consider the point
		\begin{equation}\label{example_point_p}
			P = \left(1:\tfrac{1}{2}:1:1+\alpha:2:1-\alpha:\tfrac{1}{2}+\tfrac{1}{2}\alpha:\tfrac{1}{2}-\tfrac{1}{2}\alpha:7\right).
		\end{equation}
		Let $P'$ denote its representative in $K^9$ defined by the same coordinates as given above. We set $\pprime_3 = (3,1+\alpha)$, $\overline{\pprime}_3 = (3,1-\alpha)$, $\pprime_7 = (7,3+\alpha)$ and $\overline{\pprime}_7 = (7,3-\alpha)$.
		One easily checks that the entries of $P'$ have the following factorisations into prime ideals
		\begin{align*}
			\left(\tfrac{1}{2}\right) = \mathfrak{p}^{-2},\quad
			\left(1+\alpha\right) = \mathfrak{p}\pprime_3,\quad
			(2) = \mathfrak{p}^2,\quad
			\left(1-\alpha\right) = \mathfrak{p}\overline{\pprime}_3,\\
			\left(\tfrac{1}{2}+\tfrac{1}{2}\alpha\right) = \mathfrak{p}^{-1}\pprime_3,\quad
			\left(\tfrac{1}{2}-\tfrac{1}{2}\alpha\right) = \mathfrak{p}^{-1}\overline{\pprime
			}_3,\quad\text{and}\quad
			(7) = \pprime_7\overline{\pprime}_7.
		\end{align*}
		This shows that
		\begin{align*}
			\begin{array}{lll}
				I_1 = \ringofintegers,& I_2 = \ringofintegers, &I_3 = \ringofintegers,\\
				I_4 = \pprime_3,& I_5 = \mathfrak{p},& I_6 = \overline{\pprime}_3,\\ 
				I_7 = \pprime_7,& I_8 = \overline{\pprime}_3,\text{ and} & I_9 = \pprime_7\overline{\pprime}_7.
			\end{array}
		\end{align*}
		We deduce that \eqref{gcd-condition_eta} is satisfied. Moreover, the torsor equation \eqref{torsor-equation_new} holds, since
		\[2\cdot 7 + \tfrac{1}{2}(1+\alpha)^3 + \tfrac{1}{2}(1-\alpha)^3 = 0.\] 
		\Cref{lemma_parameterisation_of_integer_points_via_universal_torsor} shows that $P'$ is a point in ${}_{\bm{C}}\mathcal{Y}_{1}(\ringofintegers)\cap (\Psi^{-1}(V))(K)$ and therefore corresponds to an integral point on $S$.
		
		Clearly, $\eta_2\not\in\ringofintegers^\times$. Thus, \Cref{lemma_nicer_representation} yields that $P'$ is no element of ${}_{\bm{C}'}\mathcal{Y}_{1}(\ringofintegers)\cap (\Psi^{-1}(V))(K)$ (note that $\bm{C}' = (\pprime^3,\pprime,\dots,\pprime)$). Hence, $P'$ does not lie in the parameterisation of integral points from \Cref{lemma_nicer_representation}.
		Instead, \Cref{lemma_nicer_representation} represents $P$ by 
		\[P'' = (1,1,1,1+\alpha,2,1-\alpha, 1+\alpha, 1-\alpha, 14),\]
		which is obtained by acting on $P'$ with $\bm{t} = (2,1,1,1,1,1)$.
		
		We further note that $P'$ satisfies \eqref{integrality_conditions_mod_p} with $\bm{t} = 1$ for all prime ideals except $\pprime$. By taking $t_1=\dots=t_5=\tfrac{1}{2}+\tfrac{1}{2}\alpha$ and $t_0 = -\tfrac{7}{2}-\tfrac{1}{2}\alpha$ for $\pprime$ (we note that $t_i\ringofintegers = \mathfrak{p}^{-1}\pprime_3$ for $i=1,\dots,5$, and $t_0\ringofintegers = \mathfrak{p}^{-1}\pprime_3^3$), we have 
		\[\bm{t}P' = (1,1,1,-2+\alpha,1+\alpha,3,-2+\alpha,3,14).\]
		By considering the corresponding factorisations of the entries into prime ideals
		\begin{equation*}
			(-2+\alpha) = \pprime_3^2,\quad
			(1+\alpha) = \mathfrak{p}\pprime_3,\quad
			(3) = \pprime_3\overline{\pprime}_3,\quad\text{and}\quad
			(14) = \mathfrak{p}^2\pprime_7\overline{\pprime}_7
		\end{equation*}
		we conclude that $\bm{t}P'$ satisfies \eqref{integrality_conditions_mod_p} for $\mathfrak{p}$ (but not for $\pprime_3$).
	\end{example}

	\subsection{Log-anticanonical bundles and associated height functions}
	Now, we study the log-anticanonical bundles and the associated height functions. Recall that, due to symmetry reasons, the cases concerning $D_4$ and $D_5$ can be reduced to $D_2$ and $D_3$, respectively.
	
	\begin{lemma}\label{gcd_sets_M}
		The only nonzero reduced effective divisors $D\subset\widetilde{S}$ such that $\omega_{\widetilde{S}}(D)^\vee$ is big and nef are $D_i$ for $i\in\{1,\ldots,5\}$. 
		
		Consider the sets
		\begin{align*}
			M_1 &= \lbrace \eta_1\eta_2\eta_4^2\eta_5\eta_7, \eta_2\eta_3\eta_5\eta_6^2\eta_8,\eta_1\eta_2^2\eta_3\eta_4\eta_5^2\eta_6,\eta_4\eta_6\eta_7\eta_8 \rbrace,\\
			M_2 &=\lbrace \eta_1\eta_2\eta_4\eta_5\eta_7,\eta_1\eta_2^2\eta_3\eta_5^2\eta_6,\eta_6\eta_7\eta_8 \rbrace, \text{ and }\\
			M_3 &= \lbrace \eta_1^2\eta_2^2\eta_3\eta_4^2\eta_5,\eta_1\eta_2\eta_3\eta_4\eta_6\eta_8,\eta_8\eta_9 \rbrace
		\end{align*}
		of monomials in the Cox ring $R$ of degree $\omega_{\widetilde{S}}(D_i)^\vee$ for $i=1,2,3$, respectively. For any $6$-tuple $\bm{C}$ of nonzero fractional ideals of $\mathcal{O}_K$, and for $\bm{\eta}\in \mathcal{O}_{1}\times\cdots\times\mathcal{O}_9$ satisfying \eqref{gcd-condition_eta}, the greatest common divisor of the set $M_i$ is the ideal 
		\begin{align*}
			\begin{cases}
				C_0^2C_1^{-1}C_2^{-1} & \text{if }i=1,\\
				C_0^2C_1^{-1}C_2^{-1}C_4^{-1} & \text{if } i=2,\\
				C_0^2C_2^{-1}C_3^{-1}C_5^{-1} & \text{if } i=3.
			\end{cases}
		\end{align*}
	\end{lemma}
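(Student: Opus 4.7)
My plan is to prove the two claims separately. The first (classification of admissible boundaries) can be settled by invoking \cite[Theorem 10]{derenthal_wilsch_21}, already cited in \Cref{remark: i=3 not countable}, which classifies those reduced effective boundary divisors on the minimal desingularisation of a singular del Pezzo surface of degree $\le 4$ that make $\omega_{\widetilde{S}}(D)^\vee$ big and nef. In our geometry, inspection of \Cref{fig:1-Dynkin-diagram} shows that the only admissible divisors are those associated with the singularity $Q$ and the lines $L_2,\ldots,L_5$, yielding precisely $D_1,\ldots,D_5$. For a direct sanity check---most importantly for the ruled-out candidate $D=E_1+E_2+E_3+E_5$ coming from $L_1$---one uses that nefness can be tested on the finitely many negative curves $E_1,\ldots,E_9$ via the intersection form on $\Pic(\widetilde{S})$ recorded after \eqref{degree_E_i}; in that case $(-K_{\widetilde{S}}-D).E_2=-1$.

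For the second claim, I would first identify each monomial in $M_i$ as a quotient of one of the anticanonical sections in \eqref{sections_L0} used to define the height $H_i$ by the common torsor factor $\eta_{D_i}:=\prod_{E_j\subset|D_i|}\eta_j$. Explicitly, $\eta_{D_1}=\eta_1\eta_2\eta_3$, $\eta_{D_2}=\eta_1\eta_2\eta_3\eta_4$, and $\eta_{D_3}=\eta_7$; dividing the relevant sections by these factors reproduces $M_1$, $M_2$, $M_3$. A direct computation using the degrees in \eqref{degree_E_i} then shows that every monomial in $M_i$ has class $-K_{\widetilde{S}}-D_i$, i.e.\ degree $\omega_{\widetilde{S}}(D_i)^\vee$.

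Next, for $\bm{\eta}\in\mathcal{O}_1\times\cdots\times\mathcal{O}_9$ and a monomial $\prod\eta_j^{a_j}\in M_i$, the identities $(\eta_j)=I_j\cdot\mathcal{O}_j$ from \eqref{def_I_j} and $\mathcal{O}_j=\bm{C}^{\deg\eta_j}$ from \eqref{def_O_i} yield
\[\left(\prod_j\eta_j^{a_j}\right)=\left(\prod_j I_j^{a_j}\right)\cdot\bm{C}^{\omega_{\widetilde{S}}(D_i)^\vee}.\]
The second factor depends only on the common degree of the monomials in $M_i$ and evaluates, by direct substitution, to $C_0^2C_1^{-1}C_2^{-1}$, $C_0^2C_1^{-1}C_2^{-1}C_4^{-1}$, or $C_0^2C_2^{-1}C_3^{-1}C_5^{-1}$ for $i=1,2,3$, respectively. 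The lemma thus reduces to showing that $\gcd\{\prod_j I_j^{a_j}:\prod_j\eta_j^{a_j}\in M_i\}=\ringofintegers$.

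The main obstacle is this final gcd identity, which I would handle by a clique analysis on the extended Dynkin diagram. Fix a prime ideal $\pprime$ and set $T_\pprime:=\{j\in\{1,\ldots,9\}:\pprime\mid I_j\}$. Any two distinct indices in $T_\pprime$ must be joined by an edge by \eqref{gcd-condition_eta}, so $T_\pprime$ is a clique in \Cref{fig:1-Dynkin-diagram}; inspection shows that the cliques of this graph are the vertices, the edges, and the single triangle $\{7,8,9\}$. For each such clique and each $i$, it is enough to exhibit a monomial in $M_i$ whose set of indices is disjoint from $T_\pprime$. The critical case is the triangle $\{7,8,9\}$, handled by $\eta_1\eta_2^2\eta_3\eta_4\eta_5^2\eta_6\in M_1$, $\eta_1\eta_2^2\eta_3\eta_5^2\eta_6\in M_2$, and $\eta_1^2\eta_2^2\eta_3\eta_4^2\eta_5\in M_3$, none of which involves $\eta_7,\eta_8,\eta_9$. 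The remaining edge and singleton cases are routine to tabulate from the explicit lists defining $M_1,M_2,M_3$.
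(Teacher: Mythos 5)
Your proposal is correct and follows essentially the same route as the paper: the first claim is quoted from \cite[Theorem 10]{derenthal_wilsch_21}, the common ideal factor $\bm{C}^{\omega_{\widetilde{S}}(D_i)^\vee}$ (computed from \eqref{degree_E_i} and \eqref{def_O_i}) is pulled out of each monomial, and the residual gcd of the products $\prod_j I_j^{a_j}$ is shown to be trivial using the coprimality conditions \eqref{gcd-condition_eta}. Your clique enumeration on \Cref{fig:1-Dynkin-diagram} is merely a more systematic packaging of the paper's case analysis (which fixes one monomial in each $M_i$ and splits on which of its factors a given prime divides), and the singleton and edge cases you leave as ``routine'' do all check out against the supports of the monomials in $M_1,M_2,M_3$.
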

	\begin{proof}
		The first statement is a special case of \cite[Theorem 10]{derenthal_wilsch_21}.
		For the first set, a simple computation shows
		\begin{align*}
			\eta_1\eta_2\eta_4^2\eta_5\eta_7\ringofintegers+
			\eta_2\eta_3\eta_5\eta_6^2\eta_8\ringofintegers+
			\eta_1\eta_2^2\eta_3\eta_4\eta_5^2\eta_6\ringofintegers+
			\eta_4\eta_6\eta_7\eta_8\ringofintegers\\
			= C_0^2C_1^{-1}C_2^{-1}(I_1I_2I_4^2I_5I_7+I_2I_3I_5I_6^2I_8+I_1I_2^2I_3I_4I_5^2I_6+I_4I_6I_7I_8).
		\end{align*}
		We want to show 
		\begin{equation}\label{gcd_cond_M1}
			I_1I_2I_4^2I_5I_7+I_2I_3I_5I_6^2I_8+I_1I_2^2I_3I_4I_5^2I_6+I_4I_6I_7I_8 = \ringofintegers.
		\end{equation}
		Assume that $\mathfrak{p}\divides I_4I_6I_7I_8$ for a prime ideal $\pprime\subset\ringofintegers$. Then, we distinguish four cases. If $\mathfrak{p}\divides I_4$, then $\mathfrak{p}\nmid I_2I_3I_5I_6I_8$, since the corresponding divisors $E_2,E_3,E_5,E_6,E_8$ do not share an edge with $E_4$ in \Cref{fig:1-Dynkin-diagram}. Thus, the second addend is not divisible by $\mathfrak{p}$. If $\mathfrak{p}\divides I_6$, then $\mathfrak{p}\nmid I_1I_2I_4I_5I_7$. Hence, the first addend is not divisible by $\mathfrak{p}$. If $\mathfrak{p}\divides I_7$, then $\mathfrak{p}\nmid I_1I_2I_3I_5I_6$. And it can only divide either $I_4$ or $I_8$, because the corresponding divisors $E_4$ and $E_8$ do not share an edge in \Cref{fig:1-Dynkin-diagram}. Therefore, either the second or the third addend is not divisible by $\mathfrak{p}$. Lastly, assume $\mathfrak{p}\divides I_8$. Then, $\mathfrak{p}\nmid I_1\cdots I_5$ and it divides either $I_6$ or $I_7$. Thus, either the first or the third addend is not divisible by $\mathfrak{p}$. This proves \eqref{gcd_cond_M1}, and hence the statement for the first set.
		
		A very similar argument as above shows that \eqref{gcd_cond_M1} implies
		\begin{equation*}
			I_1I_2I_4I_5I_7+I_1I_2^2I_3I_5^2I_6+I_6I_7I_8 = \ringofintegers.
		\end{equation*}
		Then, we obtain
		\begin{equation*}
			\eta_1\eta_2\eta_4\eta_5\eta_7\ringofintegers+
			\eta_1\eta_2^2\eta_3\eta_5^2\eta_6\ringofintegers+
			\eta_6\eta_7\eta_8\ringofintegers
			= C_0^2C_1^{-1}C_2^{-1}C_4^{-1}.
		\end{equation*}
		
		For the third set, assume $\mathfrak{p}\divides I_8$ for a prime ideal $\mathfrak{p}\subset\ringofintegers$. Then, $\mathfrak{p}\nmid I_1\cdots I_5$ and $I_1^2I_2^2I_3I_4^2I_5$ is not divisible by $\mathfrak{p}$. If $\mathfrak{p}\divides I_9$, we have $\mathfrak{p}\nmid I_1\cdots I_4I_6$. 
		And $\mathfrak{p}$ divides either $I_5$ or $I_8$, since the corresponding divisors $E_5$ and $E_8$ do not share an edge in \Cref{fig:1-Dynkin-diagram}. Thus, we get
		\[I_1^2I_2^2I_3I_4^2I_5+I_1I_2I_3I_4I_6I_8+I_8I_9=\ringofintegers,\]
		and therefore,
		\begin{equation*}
			\eta_1^2\eta_2^2\eta_3\eta_4^2\eta_5\ringofintegers+
			\eta_1\eta_2\eta_3\eta_4\eta_6\eta_8\ringofintegers+
			\eta_8\eta_9\ringofintegers = C_0^2C_2^{-1}C_3^{-1}C_5^{-1}.
		\end{equation*}
		This proves the lemma.
	\end{proof}

	For $\bm{C}\in\mathcal{C}_i$, $i=1,2,3$, define
	\begin{align}\label{def_uC_i}
		\begin{split}
			u_{\mathbf{C},1}&= 
			\idealnorm{C_3^2C_4C_5},\\
			u_{\mathbf{C},2}&= 
			\idealnorm{C_3^2C_5},\quad\text{and}\\
			u_{\mathbf{C},3}&= 
			\idealnorm{C_1^2C_2^{-1}C_3^{-1}C_4^2C_5^{-1}}.
		\end{split}
	\end{align}
	
	The sets $M_i$ of sections define adelic metrics on the line bundles that are isomorphic to $\omega_{\widetilde{S}}(D_i)^\vee$,  $i=1,2,3$. Then, log-anticanonical height functions $\widetilde{H}_i$ are induced by these metrics for $i=1,2,3$
	(see for example \cite{pey95,pey03} on how heights are induced by metrics).
	
	\begin{lemma}\label{height_function_on_desingularisation}
		For $\bm{\eta} = (\eta_1,\ldots,\eta_9)\in K^9$ satisfying the torsor equation \eqref{torsor-equation_new} and condition \eqref{uni-condition} 
		let
		\begin{equation*}
			\mathcal{H}_i(\bm{\eta}) = 
			\begin{cases}
				\max\lbrace \norminf{\eta_4^2\eta_5\eta_7},
				\norminf{\eta_5\eta_6^2\eta_8},
				\norminf{\eta_4\eta_5^2\eta_6},
				\norminf{\eta_4\eta_6\eta_7\eta_8}\rbrace, & i=1,\\
				\max\lbrace \norminf{\eta_5\eta_7}, \norminf{\eta_5^2\eta_6},\norminf{\eta_6\eta_7\eta_8}\rbrace, & i=2,\\
				\max\lbrace \norminf{ \eta_1^2\eta_2^2\eta_3\eta_4^2\eta_5},\norminf{ \eta_1\eta_2\eta_3\eta_4\eta_6\eta_8},\norminf{\eta_8\eta_9}\rbrace, & i=3.
			\end{cases}
		\end{equation*}
		For $B\ge0$, $\bm{C}\in\mathcal{C}_i$ and $\bm{\eta}\in{}_{\bm{C}}\mathcal{Y}_{i}(\ringofintegers)\cap(\Psi^{-1}(V))(K)$,
		we have 
		\begin{equation*}
			\mathcal{H}_i(\bm{\eta})\le u_{\bm{C},i}B \text{ if and only if }\widetilde{H}_i(\rho(\bm{\eta})) = H_i(\pi(\rho(\bm{\eta})))\le B,
		\end{equation*}
		where $H_i$ is one of the height functions defined in \eqref{height_function_general}, \eqref{height_function_2} and \eqref{height_function_3}, and $\widetilde{H}_i$ is the log-anti\-ca\-no\-ni\-cal height on $\widetilde{S}(K)$ induced by the sections in \Cref{gcd_sets_M}, $i=1,2,3$.
	\end{lemma}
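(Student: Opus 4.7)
The plan is to evaluate $\widetilde{H}_i(\rho(\bm{\eta}))$ directly in terms of the sections from \Cref{gcd_sets_M}, and then to identify the result with both $\mathcal{H}_i(\bm{\eta})/u_{\bm{C},i}$ and $H_i(\pi(\rho(\bm{\eta})))$, from which the stated biconditional is immediate.

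First, I would identify each $m\in M_i$ with a quotient of one of the anticanonical sections in \eqref{sections_L0} by the monomial $\prod_{E_j\subset|D_i|}\eta_j$. Concretely, for $i=1$ the four coordinate sections $x_0,x_1,x_2,x_3$ are each divisible by $\eta_1\eta_2\eta_3$ in the Cox ring, and the resulting quotients coincide termwise with $M_1$; for $i=2$ the sections $x_0,x_2,x_3$ are divisible by $\eta_1\eta_2\eta_3\eta_4$ and yield $M_2$; for $i=3$ the sections $x_0,x_3,x_4$ are divisible by $\eta_7$ and yield $M_3$. Since $[\omega_{\widetilde{S}}(D_i)^\vee]=[-K_{\widetilde{S}}]-[D_i]$, this confirms on the level of divisor classes that the $M_i$ generate line bundles isomorphic to $\omega_{\widetilde{S}}(D_i)^\vee$, and provides the dictionary between the log-anticanonical height and the height $H_i$ on $S$.

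Second, I would compute $\widetilde{H}_i(\rho(\bm{\eta}))$ from the natural adelic metric attached to the sections $M_i$. Since $K$ has a single archimedean place and $\bm{\eta}$ is an integral-model representative over $\bm{C}\in\mathcal{C}_i$, the standard Schanuel-type quotient formula yields
\begin{equation*}
\widetilde{H}_i(\rho(\bm{\eta}))=\frac{\max_{m\in M_i}\norminf{m(\bm{\eta})}}{\idealnorm{\sum_{m\in M_i}m(\bm{\eta})\ringofintegers}}.
\end{equation*}
By \Cref{gcd_sets_M}, the denominator equals $\idealnorm{C_0^2C_1^{-1}C_2^{-1}}$, $\idealnorm{C_0^2C_1^{-1}C_2^{-1}C_4^{-1}}$, and $\idealnorm{C_0^2C_2^{-1}C_3^{-1}C_5^{-1}}$ for $i=1,2,3$ respectively. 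Substituting the defining relations of $\mathcal{C}_i$ — namely $C_0=C_3C_4C_5$ and $C_1=C_4,\,C_2=C_5$ for $i=1$; $C_0=C_3C_5$ and $C_1=C_4=\ringofintegers,\,C_2=C_5$ for $i=2$; and $C_0=C_1C_4$ for $i=3$ — these norms simplify to exactly $u_{\bm{C},1}$, $u_{\bm{C},2}$, and $u_{\bm{C},3}$ as defined in \eqref{def_uC_i}.

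Third, I would invoke \eqref{uni-condition} together with the fact that, since $K$ is imaginary quadratic, every element of $\ringofintegers^\times$ is a root of unity and therefore has archimedean norm $\norminf{\cdot}=1$ and generates the unit ideal. Hence, stripping the unit factors $\eta_j$ (for $E_j\subset|D_i|$) from each $m\in M_i$ neither changes the archimedean norms nor affects the ideal generated by the values: the stripped monomials are precisely those appearing in the definition of $\mathcal{H}_i$, so $\max_{m\in M_i}\norminf{m(\bm{\eta})}=\mathcal{H}_i(\bm{\eta})$, giving $\widetilde{H}_i(\rho(\bm{\eta}))=\mathcal{H}_i(\bm{\eta})/u_{\bm{C},i}$. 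The mirror-image argument — multiplying each $m(\bm{\eta})$ by the unit factor $\prod_{E_j\subset|D_i|}\eta_j$ to recover the coordinate sections $x_j$ appearing in $H_i$ — identifies numerator and denominator of $\widetilde{H}_i(\rho(\bm{\eta}))$ with those of $H_i(\pi(\rho(\bm{\eta})))$, yielding the second equality and hence the lemma.

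The only delicate point is the second step, i.e.\ the bookkeeping that specialises the gcd computation of \Cref{gcd_sets_M} to $\bm{C}\in\mathcal{C}_i$ and matches it with $u_{\bm{C},i}$; this is a short but careful ideal calculation. Steps one and three are direct inspections once the unit structure of $\ringofintegers$ in the imaginary quadratic setting is used.
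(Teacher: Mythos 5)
Your proposal is correct and follows essentially the same route as the paper's proof: both evaluate the height induced by the sections $M_i$ as a product over places, identify the finite-place contribution with $\idealnorm{\sum_{m\in M_i}m(\bm{\eta})\ringofintegers}^{-1}=u_{\bm{C},i}^{-1}$ via \Cref{gcd_sets_M} and \eqref{def_uC_i}, and then use that the $\eta_j$ with $E_j\subset\lvert D_i\rvert$ are roots of unity to match both the archimedean maximum with $\mathcal{H}_i(\bm{\eta})$ and the numerator/denominator of $H_i$. Your write-up is somewhat more explicit than the paper's (in particular the termwise identification $m = x_k/\prod_{E_j\subset\lvert D_i\rvert}\eta_j$ and the substitution of the $\mathcal{C}_i$-relations), but the argument is the same.
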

	\begin{proof}
		For $i\in\{1,2,3\}$, the above introduced log-anticanonical height functions induced by the metrics are given by $\widetilde{H}_i(x)=H_{\PP^{N_i}}(f_i(x))$ where $f_i(\rho(\bm{\eta})) = (m_0(\bm{\eta}):\dots:m_{N_i}(\bm{\eta}))$ for the sections $m_0,\ldots,m_{N_i}\in M_i$ constructed in \Cref{gcd_sets_M}.
		Therefore, 
		\begin{equation*}
			\widetilde{H}_i(\rho(\bm{\eta})) = \prod_{v\in\Omega_K}\max_{m\in M_i}\{\absvalue{m(\bm{\eta})}_v^2\}.
		\end{equation*}
		By definition of the $\mathfrak{p}$-adic absolute value, \Cref{gcd_sets_M}, and \eqref{def_uC_i}, the product over all prime ideals $\mathfrak{p}$ contributes the factor $u_{\bm{C},i}^{-1}$. Hence,
		\begin{equation*}
			\widetilde{H}_i(\rho(\bm{\eta})) = u_{\bm{C},i}^{-1} \max_{m\in M_i}\{ \norminf{m(\bm{\eta})}\} = u_{\bm{C},i}^{-1}\mathcal{H}_i(\bm{\eta}).
		\end{equation*}
		
		These height functions coincide with the ones defined in the introduction: For example, for a point $\bm{\eta}$ in ${}_{\bm{C}}\mathcal{Y}_{i}(\ringofintegers)\cap(\Psi^{-1}(V))(K)$ we have $\eta_1,\eta_2,\eta_3\in\ringofintegers^\times$. Thus $\norminf{\eta_4^2\eta_5\eta_7} = \norminf{\eta_1^2\eta_2^2\eta_3\eta_4^2\eta_5\eta_7} = \absvalue{x_0}^2$. We have analogous identities for the other coordinates and cases. 
		In addition, due to \Cref{gcd_sets_M} and \eqref{def_uC_i}, we have $\idealnorm{x_0\ringofintegers + \dots x_3\ringofintegers} = u_{\bm{C},1}$, and we obtain analogous results for the other two cases.
	\end{proof}

	\subsection{The Counting Problem}
	In this subsection we combine the results from the previous subsections to give a parameterisation of integral points on $\mathcal{U}_i$ via integral points on a universal torsor. We use this parameterisation to finally concretise our counting problem. 
	But first, we prove that there is a bijection between the integral points on the del Pezzo surface $S$ and its desingularisation $\widetilde{S}$. Therefore, it makes no difference to speak about integral points on the former in place of the latter. 
	
	\begin{lemma}\label{lemma_bijection_U_and_tilde(U)}
		For $i\in\{1,2,3\}$, the morphism $\pi\colon\widetilde{S}\rightarrow S$ induces bijections
		\[\widetilde{\mathcal{U}}_i(\ringofintegers)\cap\widetilde{V}(K)\rightarrow \mathcal{U}_i(\ringofintegers)\cap V(K).\]
		\begin{proof}
			We consider the morphism $f\colon\mathcal{Y}\rightarrow \PP^4_{\ringofintegers}$ which is given by $\bm{\eta}\mapsto(s_0(\bm{\eta}):\cdots:s_4(\bm{\eta}))$ where the $s_j$ are the anticanonical sections given in \eqref{sections_L0}. We already know that $\pi|_{\widetilde{V}}$ is an isomorphism. Hence, it induces a bijection between the sets $\widetilde{V}(K)$ and $V(K)$, and it remains to show that the integrality condition \eqref{I_j=O_K} resp. \eqref{uni-condition} for $\bm{\eta}$ is satisfied if and only if the corresponding integrality condition on $\mathcal{U}_i(\ringofintegers)$ is satisfied for $f(\bm{\eta})$.
			We note that \eqref{I_j=O_K} holds if and only if \eqref{uni-condition} holds with the choice of $\bm{C}$ and $\bm{C}'$ we make in \Cref{lemma_nicer_representation}. Hence, it suffices to consicer \eqref{I_j=O_K} here.
			
			Let us recall that a point $\bm{x} = (x_0:\cdots:x_4)$ lies in $\mathcal{U}_i(\ringofintegers)$ if and only if we can choose $x_0,\dots,x_4\in\ringofintegers$, and \eqref{corresponding_gcd_ideal} holds as well as \eqref{integrality condition} resp. \eqref{integrality_condition_2} resp. \eqref{gcd_cond_i=3} for $i=1$ resp. $i=2$ resp. $i=3$.
			
			Let $\bm{\eta}\in \widetilde{\mathcal{U}}_i(\ringofintegers)\cap \widetilde{V}(K)$. Then, by the \Cref{gcd_sets_M} and the definition of $I_j$ we have 
			\begin{align*}
				s_0(\bm{\eta})\ringofintegers + \dots + s_3(\bm{\eta})\ringofintegers &= \eta_1\eta_2\eta_3\sum_{m\in M_1}m(\bm{\eta})\ringofintegers = I_1I_2I_3C_0^{3}C_1^{-1}\cdots C_5^{-1},\\
				s_0(\bm{\eta})\ringofintegers + s_2(\bm{\eta})\ringofintegers + s_3(\bm{\eta})\ringofintegers &= \eta_1\eta_2\eta_3\eta_4\sum_{m\in M_2} m(\bm{\eta})\ringofintegers = I_1\cdots I_4 C_0^{3}C_1^{-1}\cdots C_5^{-1},\quad\text{and}\\
				s_0(\bm{\eta})\ringofintegers + s_3(\bm{\eta})\ringofintegers+s_4(\bm{\eta})\ringofintegers &= \eta_7\sum_{m\in M_3} m(\bm{\eta})\ringofintegers = I_7 C_0^{3}C_1^{-1}\cdots C_5^{-1},
			\end{align*}
			where $M_1,\,M_2$ and $M_3$ are defined in \Cref{gcd_sets_M}.
			One easily shows with \eqref{gcd-condition_eta} (see also \cite[Lemma 9.1]{derenthal14_compos}) that
			\[s_0(\bm{\eta})\ringofintegers+\cdots +s_4(\bm{\eta})\ringofintegers = C_0^{3}C_1^{-1}\cdots C_5^{-1}.\] 
			Hence, we have to show that \eqref{I_j=O_K} is equivalent to 
			\begin{align}\label{eq:prod_I_equals_OK}
				\begin{split}
					\begin{cases}
						I_1I_2I_3 = \ringofintegers & \quad\text{ if }i=1,\\
						I_1I_2I_3I_4 = \ringofintegers & \quad\text{ if }i=2,\\
						I_7 = \ringofintegers & \quad\text{ if }i=2.
					\end{cases}
				\end{split}
			\end{align}
			Clearly, \eqref{I_j=O_K} implies \eqref{eq:prod_I_equals_OK}. Vice versa, 
			note that $I_j\subseteq \ringofintegers$. Therefore, it is easy to see that the opposite direction also holds.
		\end{proof}
	\end{lemma}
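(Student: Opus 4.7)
The plan is to peel off the easy part of the bijection on $K$-points, then reduce the integral-model condition to the torsor-level condition supplied by \Cref{lemma_nicer_representation}. Since $\pi$ contracts the exceptional curves onto the singularity $Q$, which lies outside $V$, the restriction $\pi|_{\widetilde{V}}$ is an isomorphism of $K$-schemes and hence a bijection on $K$-points. The remaining task is to verify that, under this bijection, $\widetilde{P}\in\widetilde{\mathcal{U}}_i(\ringofintegers)$ if and only if $\pi(\widetilde{P})\in\mathcal{U}_i(\ringofintegers)$, i.e.\ that the divisor-avoidance condition \eqref{I_j=O_K} on the desingularised side matches the gcd condition \eqref{integrality condition}, \eqref{integrality_condition_2}, or \eqref{gcd_cond_i=3} on the surface side.

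To do this, I lift $\widetilde{P}$ to a torsor point $\bm{\eta}$ via \Cref{lemma_nicer_representation} for some $\bm{C}'\in\mathcal{C}_i$, so that the projective coordinates of $\pi(\widetilde{P})=\Psi(\bm{\eta})$ are the anticanonical sections \eqref{sections_L0} evaluated at $\bm{\eta}$. On the torsor side, \Cref{lemma_nicer_representation} tells me that $\widetilde{P}\in\widetilde{\mathcal{U}}_i(\ringofintegers)$ is equivalent to $I_j=\ringofintegers$ for every $E_j\subset|D_i|$. On the surface side, a computation analogous to \cite[Lemma~9.1]{derenthal14_compos} yields
\[
\sum_{j=0}^{4} s_j(\bm{\eta})\,\ringofintegers \;=\; C_0^{3}C_1^{-1}\cdots C_5^{-1},
\]
while the relevant truncated sum (over indices $\{0,1,2,3\}$ for $i=1$, $\{0,2,3\}$ for $i=2$, and $\{0,3,4\}$ for $i=3$) equals the same ideal multiplied by the product $\prod_{E_j\subset|D_i|} I_j$, namely $I_1I_2I_3$, $I_1I_2I_3I_4$, and $I_7$ respectively. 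Since the $I_j$ are integral ideals of $\ringofintegers$, the integrality condition on $\pi(\widetilde{P})$ (equality of the partial and full sums) is equivalent to this product being $\ringofintegers$, which in turn is equivalent to each individual $I_j=\ringofintegers$, matching \eqref{I_j=O_K}.

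The main work lies in the gcd computation of the previous paragraph. Each truncated sum factors as a common prefactor (the product of those $\eta_j$ with $E_j\subset|D_i|$) times a sum of monomials from the set $M_i$ of \Cref{gcd_sets_M}; the coprimality relations \eqref{gcd-condition_eta} then force the inner sum to generate the unit ideal, verifiable prime-by-prime via the non-adjacency pattern of \Cref{fig:1-Dynkin-diagram}, exactly as in the proof of \Cref{gcd_sets_M}. No new ingredient is required beyond adapting that case analysis to each of the three index sets above, and once the gcd identities are established the equivalence of the two integrality conditions is formal.
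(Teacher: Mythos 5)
Your proposal is correct and follows essentially the same route as the paper: reduce to the isomorphism $\pi|_{\widetilde{V}}$ on $K$-points, lift to the torsor, and compare the truncated gcd sums (which factor as $\prod_{E_j\subset|D_i|}I_j$ times the full gcd $C_0^3C_1^{-1}\cdots C_5^{-1}$ via \Cref{gcd_sets_M}) with the full one, concluding from $I_j\subseteq\ringofintegers$ that the product being trivial forces each factor to be trivial. The only cosmetic difference is that you phrase the torsor-level condition via \eqref{uni-condition} from \Cref{lemma_nicer_representation}, whereas the paper works with \eqref{I_j=O_K} after noting the two are equivalent for the chosen twists.
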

	
	\begin{cor}\label{cor_correspondence_integral_points_universal_torsor_points}
		For $i\in\{1,2,3\}$, there is a $1$-to-$\omega_K^6$ correspondence between the set $\mathcal{U}_i(\ringofintegers)\cap V(K)$ of integral points and
		\[\bigcup_{\bm{C}\in\mathcal{C}_i} \{\mathbf{\eta}\in \mathcal{O}_{1*}\times\dots\times\mathcal{O}_{9*}\mid \eqref{torsor-equation_new},~ \eqref{gcd-condition_eta}, ~\eqref{uni-condition}\}.\]
		\begin{proof}
			This immediately follows from \Cref{lemma_parameterisation_of_integer_points_via_universal_torsor}, \Cref{lemma_nicer_representation}, and \Cref{lemma_bijection_U_and_tilde(U)}.
		\end{proof}
	\end{cor}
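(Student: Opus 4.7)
The plan is to simply chain together the three preceding results. The corollary is a direct composition, so no new work is needed; I would spell out the composition as follows.

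First, I would invoke \Cref{lemma_bijection_U_and_tilde(U)} to identify the target set $\mathcal{U}_i(\ringofintegers)\cap V(K)$ with $\widetilde{\mathcal{U}}_i(\ringofintegers)\cap \widetilde{V}(K)$ via the bijection induced by $\pi\colon\widetilde{S}\to S$. This replaces the counting problem on $S$ with the analogous one on the minimal desingularisation $\widetilde{S}$, where the universal torsor lives.

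Next, I would apply \Cref{lemma_nicer_representation} to $\widetilde{\mathcal{U}}_i(\ringofintegers)\cap \widetilde{V}(K)$. That lemma already provides the $\omega_K^6$-to-$1$ correspondence
\[
\bigsqcup_{\bm{C}\in\mathcal{C}_i}{}_{\bm{C}}\mathcal{Y}_{i}(\ringofintegers)\cap (\Psi^{-1}(V))(K) \longrightarrow \widetilde{\mathcal{U}}_i(\ringofintegers)\cap \widetilde{V}(K),
\]
together with the explicit description of each fibre as the set of $(\eta_1,\dots,\eta_9)\in \mathcal{O}_{1*}\times\dots\times \mathcal{O}_{9*}$ satisfying \eqref{torsor-equation_new}, \eqref{gcd-condition_eta}, and \eqref{uni-condition}. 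Composing with the bijection from \Cref{lemma_bijection_U_and_tilde(U)} yields exactly the stated $1$-to-$\omega_K^6$ correspondence (the direction is flipped compared to \Cref{lemma_nicer_representation}, which accounts for the $1$-to-$\omega_K^6$ rather than $\omega_K^6$-to-$1$ phrasing). The disjoint union in \Cref{lemma_nicer_representation} has been relaxed to a union in the statement, which is harmless.

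There is no genuine obstacle here: the content of the corollary is already entirely packaged in the three results it cites. The only minor point to double-check is that the parameterisation of \Cref{lemma_parameterisation_of_integer_points_via_universal_torsor} is invoked only implicitly, through its use inside the proof of \Cref{lemma_nicer_representation}; so in practice the proof is just a one-line composition of \Cref{lemma_nicer_representation} with \Cref{lemma_bijection_U_and_tilde(U)}.
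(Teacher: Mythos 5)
Your proof is correct and matches the paper's own argument, which is exactly the one-line composition of \Cref{lemma_nicer_representation} (whose proof already uses \Cref{lemma_parameterisation_of_integer_points_via_universal_torsor}) with the bijection from \Cref{lemma_bijection_U_and_tilde(U)}. Your remarks about the flipped direction of the correspondence and the disjoint union being written as a union are accurate and harmless.
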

	
	Now, we can concretise our counting problem. As mentioned in \Cref{remark: i=3 not countable}, from now on we only consider $i=1,2$.
	Let us recall the definition of the height function $\mathcal{H}_i$ in \Cref{height_function_on_desingularisation} and the definition of $u_{\bm{C},i}$ in \eqref{def_uC_i}. For $i=1,2$ and $\bm{C}\in\mathcal{C}_i$, define 
	\begin{equation*}
		M_{\bm{C},i}(B) = \left\lbrace
		(\eta_1,\ldots,\eta_9)\in \mathcal{O}_{1*}\times\dots\times\mathcal{O}_{9*} ~
		\begin{array}{|c}
			\eqref{gcd-condition_eta}, ~\eqref{uni-condition},\\
			\eta_1\eta_4^2\eta_7+\eta_3\eta_6^2\eta_8+\eta_5\eta_9 = 0,\\
			\mathcal{H}_i(\eta_1,\ldots,\eta_8)\le u_{\bm{C},i}B        
		\end{array}
		\right\rbrace.
	\end{equation*}
	
	\begin{prop}\label{prop: N_i(B)}
		For $i\in\{1,2\}$ we obtain
		\begin{equation*}
			N_i(B) = \frac{1}{\omega_K^6}\sum_{\bm{C}\in\mathcal{C}_i}\# M_{\bm{C},i}(B).
		\end{equation*}
	\end{prop}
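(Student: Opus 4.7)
The plan is to assemble the proposition directly from the three ingredients already established in this section: the bijection $\pi\colon\widetilde{\mathcal{U}}_i(\ringofintegers)\cap\widetilde{V}(K)\to \mathcal{U}_i(\ringofintegers)\cap V(K)$ of Lemma \ref{lemma_bijection_U_and_tilde(U)}, the disjoint torsor parameterisation of Proposition \ref{lemma_nicer_representation} (summarised in Corollary \ref{cor_correspondence_integral_points_universal_torsor_points}), and the height translation of Lemma \ref{height_function_on_desingularisation}. Once these are combined, the proposition reduces to bookkeeping.

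First I would start from the defining expression
\[
N_i(B) = \#\{\bm{x}\in \mathcal{U}_i(\ringofintegers)\cap V(K)\mid H_i(\bm{x})\le B\}
\]
and apply Proposition \ref{lemma_nicer_representation}, which exhibits $\mathcal{U}_i(\ringofintegers)\cap V(K)$ as the image under $\pi\circ\rho$ of the disjoint union $\bigsqcup_{\bm{C}\in\mathcal{C}_i}{}_{\bm{C}}\mathcal{Y}_i(\ringofintegers)\cap(\Psi^{-1}(V))(K)$ with all fibres of uniform size $\omega_K^6$. Because the union is disjoint and the fibre size is constant, the count splits as $\frac{1}{\omega_K^6}$ times a sum over $\bm{C}\in\mathcal{C}_i$ of torsor points subject to a translated height bound.

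Next I would use Lemma \ref{height_function_on_desingularisation} to rewrite the height condition $H_i(\pi(\rho(\bm{\eta})))\le B$ equivalently as $\mathcal{H}_i(\bm{\eta})\le u_{\bm{C},i}B$, which is precisely the height inequality built into $M_{\bm{C},i}(B)$. Combining this with the explicit description of ${}_{\bm{C}}\mathcal{Y}_i(\ringofintegers)\cap(\Psi^{-1}(V))(K)$ as the tuples in $\mathcal{O}_{1*}\times\cdots\times\mathcal{O}_{9*}$ satisfying \eqref{torsor-equation_new}, \eqref{gcd-condition_eta} and \eqref{uni-condition} identifies the $\bm{C}$-summand with exactly $M_{\bm{C},i}(B)$, and summing yields the claimed formula.

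There is no substantive obstacle here; the argument is purely formal. The only subtlety worth being explicit about is that the $\mathbb{G}_{m,\ringofintegers}^6(\ringofintegers)=(\ringofintegers^\times)^6$-action on ${}_{\bm{C}}\mathcal{Y}_i(\ringofintegers)$ preserves the height condition, because $\widetilde{H}_i$ lives on $\widetilde{S}(K)$ and therefore $H_i\circ\pi\circ\rho = \widetilde{H}_i\circ\rho$ is automatically $(\ringofintegers^\times)^6$-invariant. This guarantees that the fibres of the $\omega_K^6$-to-$1$ correspondence remain of full size $\omega_K^6$ after imposing $H_i\le B$, so no additional correction factor appears in the final identity.
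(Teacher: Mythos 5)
Your argument is correct and follows essentially the same route as the paper, which likewise combines Corollary \ref{cor_correspondence_integral_points_universal_torsor_points} (the $1$-to-$\omega_K^6$ torsor correspondence restricted to $\bm{C}\in\mathcal{C}_i$) with the height translation of Lemma \ref{height_function_on_desingularisation}. Your added observation that the unit action preserves the height, so the fibres stay of full size $\omega_K^6$ after imposing the height bound, is a worthwhile explicit justification of a point the paper leaves implicit.
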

	\begin{proof}
		By combining \Cref{cor_correspondence_integral_points_universal_torsor_points}
		and \Cref{height_function_on_desingularisation}, this result is obtained similarly to \cite[Lemma 9.1]{derenthal14_compos} and \cite[Lemma 15]{derenthal_wilsch_21}. 
		Since $\# D_i$ components $C_j$ of $\bm{C}$ are uniquely determined by the remaining $C_j$'s, we only need to sum over $\bm{C}\in\mathcal{C}_i$.
	\end{proof}
	
	\begin{remark}
		As in \cite{derenthal14_compos}, it is also possible to obtain the result stated in \Cref{cor_correspondence_integral_points_universal_torsor_points} by using an elementary approach. \cite[Lemma 9.1]{derenthal14_compos} gives a $1$-to-$\omega_K^6$ correspondence between $V(K)$ and 
		\begin{equation*}
			\bigcup_{(C_0,\ldots,C_5)\in\mathcal{C}^6}\lbrace (\eta_1,\ldots,\eta_9)\in\mathcal{O}_{1^*}\times\cdots\times\mathcal{O}_{9^*}\mid \eqref{torsor-equation_new},\phantom{.} \eqref{gcd-condition_eta}\rbrace.
		\end{equation*}
		Then, \Cref{cor_correspondence_integral_points_universal_torsor_points} follows from similar arguments as in the proof of \Cref{lemma_parameterisation_of_integer_points_via_universal_torsor}.
	\end{remark}

	\section{Summations}\label{section: summations}
	To compute the number $N_i(B)$, using its representation in \Cref{prop: N_i(B)}, we split up the set $M_{\bm{C},i}(B)$ into two disjoint sets depending on the sizes of $\eta_7$ and $\eta_8$. In the first set, we sum first over the bigger variable $\eta_8$; in the second set we sum first over the bigger variable $\eta_7$.
	
	More concretely, we define $M^{(8)}_{\bm{C},i}(B)$ to be the set of $(\eta_1,\ldots,\eta_9)\in M_{\bm{C},i}(B)$ with $\idealnorm{I_8}\ge \idealnorm{I_7}$, and define $M^{(7)}_{\bm{C},i}(B)$ to be the set of $(\eta_1,\ldots,\eta_9)\in M_{\bm{C},i}(B)$ with $\idealnorm{I_7}>\idealnorm{I_8}$.
	Further, let \[N_{8,i}(B) = \frac{1}{\omega_K^6} \sum_{\bm{C}\in\mathcal{C}_i}\# M_{\bm{C},i}^{(8)}(B).\]
	We define $N_{7,i}(B)$ analogously.
	Then, clearly $N_i(B) = N_{8,i}(B) + N_{7,i}(B)$.
	
	From now on, we use the notation
	\begin{align*}
		\bm{\eta}^{(i)} &= (\eta_j)_{j\in J_i} = 
		\begin{cases}
			(\eta_4,\ldots,\eta_7), & i=1,\\
			(\eta_5,\eta_6,\eta_7), & i=2,
		\end{cases}\\
		\bm{I}^{(i)} &= (I_j)_{j\in J_i} = 
		\begin{cases}
			(I_4,\ldots,I_7), & i=1,\\
			(I_5,I_6,I_7), & i=2,
		\end{cases}
	\end{align*}
	and
	\begin{equation*}
		\bm{\mathcal{O}}_*^{(i)} = 
		\begin{cases}
			\mathcal{O}_{4^*}\times\dots\times\mathcal{O}_{7^*}, & i=1,\\
			\mathcal{O}_{5^*}\times\dots\times\mathcal{O}_{7^*}, & i=2,\\
		\end{cases}
	\end{equation*}
	for $(7-\#D_i)$-tuples indexed by 
	\begin{equation*}
		J_i = \{ j\in\{1,\ldots,7\}\mid E_j\not\subset D_i\}. 
	\end{equation*}
	We write $\idealnorm{\bm{I}^{(i)}} = (\idealnorm{I_j})_{j\in J_i}$ and $\mathcal{H}_i(\bm{\eta}^{(i)},\eta_8)$ for $\mathcal{H}_i(\eta_1,\ldots,\eta_9)$. Here, by using the torsor equation \eqref{torsor-equation_new} the variable $\eta_9$ is expressed in terms of $\eta_1,\ldots,\eta_8$, assuming $\eta_5\neq 0$, and $\eta_j\in\ringofintegers^\times$ whenever $E_j\subset D_i$.
	
	\subsection{The first summation}
	We start by summing over $\eta_8$ in $M_{\bm{C},i}^{(8)}(B)$ with dependent $\eta_9$. Due to the torsor equation \eqref{torsor-equation_new}, $\eta_9$ is dependent on $\eta_1,\ldots,\eta_8$. The rough idea is to estimate the sum over $\eta_8$ by an integral over the same region.
	Similarly to Lemma 9.2 in \cite{derenthal14_compos} we obtain
	
	\begin{lemma}\label{lemma_first_summation}
		For $B>0$, $\bm{C}\in\mathcal{C}_i$, $i=1,2$, we have
		\begin{align*}
			\# M_{\bm{C},1}^{(8)}(B) &= \frac{2\omega_K^3}{\sqrt{\lvert\Delta_K\rvert}} 
			\sum_{\bm{\eta}^{(1)}\in \mathcal{O}_{*}^{(1)}} \Theta_8(\bm{I}^{(1)}) V_8(\idealnorm{\bm{I}^{(1)}};B)
			+ O_{\bm{C}}(B\left(\log B\right)^3),\quad\text{and}\\
			\# M_{\bm{C},2}^{(8)}(B) &= \frac{2\omega_K^4}{\sqrt{\absvalue{\Delta_K}}}
			\sum_{\bm{\eta}^{(2)}\in \mathcal{O}_{*}^{(2)}} \Theta_8(\bm{I}^{(2)}) V_8(\idealnorm{\bm{I}^{(2)}};B)
			+ O_{\bm{C}}(B\left(\log B\right)),
		\end{align*}
		where \begin{equation*}
			V_8(\bm{t}^{(i)};B) = \frac{1}{t_5}\int_{\substack{\mathcal{H}_i((\sqrt{t_j})_{j\in J_i},\eta_8)\le B\\ \norminf{\eta_8} \ge t_7}} \mathrm{d}\eta_8
		\end{equation*}
		with a complex variable $\eta_8$, and where
		\begin{equation*}
			\Theta_8(\bm{I}^{(i)})= \prod_\mathfrak{p}\Theta_{8,\mathfrak{p}}(I_\mathfrak{p}(\bm{I}^{(i)}))
		\end{equation*}
		with $I_\mathfrak{p}(\bm{I}^{(i)}) = \lbrace j\in J_i :\mathfrak{p}\divides I_j\rbrace$ and
		\begin{equation*}
			\Theta_{8,\mathfrak{p}}(I) =  
			\begin{dcases}
				1 & \text{if }I = \emptyset,\{5\},\{6\},\{7\},\\
				1-\frac{1}{\idealnorm{\mathfrak{p}}} & \text{if } I=\{1\},\{3\},\{4\},\{1,2\},\{1,4\},\{2,3\},\{2,5\},\{3,6\},\{4,7\},\\
				1-\frac{2}{\idealnorm{\mathfrak{p}}} & \text{if }I = \{2\},\\
				0 & \text{otherwise}.
			\end{dcases}
		\end{equation*}
		Further, if we replace the condition $\idealnorm{I_8}\ge\idealnorm{I_7}$ in the definition of $M_{\bm{C}}^{(8)}$ by $\idealnorm{I_8}>\idealnorm{I_7}$, the same asymptotic formula holds.
	\end{lemma}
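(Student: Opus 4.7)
The plan is to fix $\bm{\eta}^{(i)}$ together with the unit-valued coordinates $\eta_j$ for indices $j$ with $E_j \subset |D_i|$; summing over these units accounts for the factors $\omega_K^3$ (for $i=1$) and $\omega_K^4$ (for $i=2$) appearing in the statement. It then remains to estimate the inner sum over $\eta_8 \in \mathcal{O}_{8*}$ by the two-dimensional integral $V_8$, with the local densities $\Theta_{8,\pprime}$ emerging as Euler factors at each prime.

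First, I would use the torsor equation \eqref{torsor-equation_new} to eliminate $\eta_9 = -(\eta_1\eta_4^2\eta_7+\eta_3\eta_6^2\eta_8)/\eta_5$. The requirement $\eta_9 \in \mathcal{O}_9$ then translates into a congruence on $\eta_8$ modulo a fractional ideal built from $\eta_5$ and the $\mathcal{O}_j$'s; the admissible $\eta_8$ therefore form a coset of a sublattice of $\mathcal{O}_8$ whose covolume as a subset of $\mathbb{C}$ has the shape $\idealnorm{I_5}\cdot\sqrt{|\Delta_K|}/2$ times a constant depending only on $\bm{C}$. This is the origin of the $1/t_5$ factor inside the definition of $V_8$ and of the prefactor $2/\sqrt{|\Delta_K|}$ in the main term.

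Next, the coprimality conditions $I_j+I_8 = \ringofintegers$ for pairs $(E_j,E_8)$ that are not joined by an edge in Figure~\ref{fig:1-Dynkin-diagram} can be handled prime-by-prime by Möbius inversion over the ideals dividing $I_8$. Computing the resulting local density at a prime $\pprime$ reduces to counting residues $\eta_8 \bmod \pprime$ that satisfy both the congruence induced by the torsor equation and the coprimality constraints encoded by the set $I_\pprime(\bm{I}^{(i)}) = \{j \in J_i: \pprime \mid I_j\}$. A direct case analysis over the possible $I_\pprime$, using the edge structure of the Dynkin diagram to rule out the configurations labelled "otherwise", recovers precisely the values of $\Theta_{8,\pprime}(I)$ listed in the statement, and the Euler product form of $\Theta_8$ follows from multiplicativity.

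Finally, for the resulting lattice-point count I would invoke the standard Gauss-circle style estimate that the number of points of the coset lying in a bounded region $R \subset \mathbb{C}$ equals $\mathrm{vol}(R)/\mathrm{covol}$ with error bounded by a multiple of $1$ plus the perimeter of $R$ divided by the shortest nonzero lattice vector. Here $R$ is cut out by $\mathcal{H}_i(\bm{\eta}^{(i)},\eta_8)\le u_{\bm{C},i}B$ together with $\norminf{\eta_8} \ge \idealnorm{I_7}$. The main obstacle will be bounding this perimeter uniformly in $\bm{\eta}^{(i)}$ and then summing the resulting pointwise errors over all $\bm{\eta}^{(i)} \in \bm{\mathcal{O}}_*^{(i)}$ with $\mathcal{H}_i\le u_{\bm{C},i}B$ so as to obtain $O_{\bm{C}}(B(\log B)^3)$ for $i=1$ and $O_{\bm{C}}(B\log B)$ for $i=2$; the truncation $\norminf{\eta_8} \ge \idealnorm{I_7}$ is essential in providing the logarithmic savings. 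The remainder of the argument parallels \cite[Lemma 9.2]{derenthal14_compos} and the analogous step in \cite{derenthal_wilsch_21}, with the adaptations needed to accommodate the integrality condition \eqref{uni-condition} and the general class number $h_K$.
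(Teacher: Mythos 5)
Your plan follows essentially the same route as the paper: the paper likewise handles the main term by reducing to \cite[Lemma 9.2]{derenthal14_compos} (lattice-point count for $\eta_8$ with $\eta_9$ eliminated via the torsor equation, covolume $\sqrt{|\Delta_K|}/2$ giving the prefactor, M\"obius inversion over the coprimality conditions producing $\Theta_{8,\pprime}$, and the unit coordinates contributing $\omega_K^{3}$ resp.\ $\omega_K^{4}$). The only part the paper actually writes out in detail is the error-term summation you defer to the end --- bounding the region for $\eta_8$ by a ball of radius $\ll_{\bm{C}} B^{1/2}/\norminf{\eta_4\eta_6\eta_7}^{1/2}$ (resp.\ $\ll_{\bm{C}} B^{1/2}/\norminf{\eta_6\eta_7}^{1/2}$) and summing $2^{\omega(I_4)}(R+1)$ over the remaining ideals using the height conditions and $\idealnorm{I_8}\ge\idealnorm{I_7}$ --- and this goes through exactly as you anticipate.
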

	\begin{proof}
		The proof for the main term is analogous to Lemma 9.2 in \cite{derenthal14_compos}. As we have slightly different height functions and some $\eta_i\in\ringofintegers^\times$, we obtain different error terms. 
		
		We notice that in the case $i=1$, we have $I_j = \ringofintegers$ for $j=1,2,3$. Thus, $\idealnorm{I_j} = 1$ for $j=1,2,3$. 
		Further, there are no prime ideals dividing $I_j$ for $j=1,2,3.$ Hence, nothing is dependent on $\eta_1,\eta_2,\eta_3$ and the sum over these $\eta_j$ yields the factor $\omega_K^3$. Similarly, in the case $i=2$, the sum over $\eta_1,\dots,\eta_4$ yields the factor $\omega_K^4$.
		
		Now, we compute the error terms. We start with the case $i=1$. Due to the fourth height condition in \Cref{height_function_on_desingularisation}, we have $\norminf{\eta_8} \le u_{\bm{C},1}\frac{B}{\norminf{\eta_4\eta_6\eta_7}}$. Hence, the set $\mathcal{R}_1(\bm{\eta}^{(1)};u_{\bm{C},1}B)\subseteq\CC$ of $\eta_8$ with $\mathcal{H}(\bm{\eta}^{(1)},\eta_8)\le u_{\bm{C},1}B$ and $\idealnorm{I_8}\ge \idealnorm{I_7}$ is contained in a ball of radius
		\[ R_1(\bm{\eta}^{(1)};u_{\bm{C},1}B)=  u_{\bm{C},1}^{1/2}=\frac{B^{1/2}}{\norminf{\eta_4\eta_6\eta_7}^{1/2}} \ll_{\bm{C}} \frac{B^{1/2}}{(\idealnorm{I_4}\idealnorm{I_6}\idealnorm{I_7})^{1/2}}.\]
		Therefore, the error term is (see also \cite[Lemma 9.2]{derenthal14_compos})
		\begin{equation*}
			\ll_{\bm{C}}
			\sum_{\bm{\eta}^{(1)}\in\mathcal{O}_{*}^{(1)}}
			2^{\omega(I_4)}
			\left( 
			\frac{B^{1/2}}{\idealnorm{I_4}^{1/2}\idealnorm{I_5}^{1/2}\idealnorm{I_6}^{1/2}\idealnorm{I_7}^{1/2}} +1
			\right),
		\end{equation*}
		where $\omega(I_j)$ denotes the number of distinct prime divisors of $I_j$.
		Similar to \cite{derenthal14_compos} we can replace the sums over $\eta_j\in\mathcal{O}_{j^*}$ by sums over the ideals $I_j\in\mathcal{I}_K$, since there are at most $\lvert \ringofintegers^\times\rvert<\infty$ elements $\eta_j\in\mathcal{O}_j$ with $I_j=\mathfrak{a}$ for an ideal $\mathfrak{a}\in\mathcal{I}_K$.
		Therefore, the error term is
		\begin{equation*}
			\ll_{\bm{C}}
			\sum_{\bm{I}^{(1)}\in \mathcal{I}_K^4}
			2^{\omega(I_4)}\
			\left( 
			\frac{B^{1/2}}{\idealnorm{I_4}^{1/2}\idealnorm{I_5}^{1/2}\idealnorm{I_6}^{1/2}\idealnorm{I_7}^{1/2}} +1
			\right).
		\end{equation*}
		We first sum over $I_6$. Due to the second height condition in \Cref{height_function_on_desingularisation} and our assumption $\idealnorm{I_8}\ge \idealnorm{I_7}$ we have 
		\begin{equation*}
			\idealnorm{I_6}\le \frac{B^{1/2}}{\idealnorm{I_5}^{1/2}\idealnorm{I_8}^{1/2}}\le \frac{B^{1/2}}{\idealnorm{I_5}^{1/2}\idealnorm{I_7})^{1/2}}.
		\end{equation*}
		Hence, the error term becomes
		\begin{align*}
			&\ll_{\bm{C}}  \sum_{I_4,I_5,I_7\in \mathcal{I}_K}
			2^{\omega(I_4)}\
			\left( 
			\frac{B^{3/4}}{\idealnorm{I_4}^{1/2}\idealnorm{I_5}^{3/4}\idealnorm{I_7}^{3/4}} +
			\frac{B^{1/2}}{\idealnorm{I_5}^{1/2}\idealnorm{I_7}^{1/2}}
			\right)\\
			&\ll_{\bm{C}} \sum_{I_4,I_5\in \mathcal{I}_K}
			2^{\omega(I_4)}\
			\left( 
			\frac{B}{\idealnorm{I_4}\idealnorm{(I_5}} +
			\frac{B}{\idealnorm{I_4}\idealnorm{I_5}}
			\right)\\
			&\ll_{\bm{C}} B\log B\sum_{I_4\in\mathcal{I}_K}
			\frac{2^{\omega(I_4)}}{\idealnorm{I_4}},
		\end{align*}
		where we summed over $I_7$ in the second line with $\idealnorm{I_7}\le \frac{B}{\idealnorm{I_4}^2\idealnorm{I_5}}.$
		Lemma 2.4 and 2.9 in \cite{derenthal14_compos} yield 
		\[\sum_{I_4\in\mathcal{I}_K} \frac{2^{\omega(I_4)}}{\idealnorm{I_4}}\ll \left(\log B\right)^2.\]
		We finally obtain that the error term is $\ll_{\bm{C}} B\left(\log B\right)^3.$
		
		For the case $i=2$, the set $\mathcal{R}_2(\bm{\eta}^{(2)}; u_{\bm{C},2}B)\subset\CC$ of $\eta_8$ with $\mathcal{H}_2(\bm{\eta}^{(2)})\le u_{\bm{C},2}B$ and $\idealnorm{I_8}\ge\idealnorm{I_7}$ is contained in a ball of radius
		\[R_2(\bm{\eta}^{(2)}; u_{\bm{C},2}B) = u_{\bm{C},2}^{1/2}\frac{B^{1/2}}{\norminf{\eta_6\eta_7}^{1/2}} \ll_{\bm{C}} \frac{B^{1/2}}{(\idealnorm{I_6}\idealnorm{I_7})^{1/2}}.\]
		Hence, we obtain that the error term is
		\begin{equation*}
			\ll_{\bm{C}} \sum_{\bm{\eta}^{(2)}\in\mathcal{O}_{*}^{2}}
			\left( 
			\frac{B^{1/2}}{\idealnorm{I_5}^{1/2}\idealnorm{I_6}^{1/2}\idealnorm{I_7}^{1/2}} +1
			\right).
		\end{equation*}
		As in the previous case, we can sum over the ideals $I_j\in \mathcal{I}_K$ instead, since $\absvalue{\ringofintegers^\times}<\infty$. Thus, the error term is
		\begin{equation*}
			\ll_{\bm{C}} \sum_{\bm{I}^{(2)}\in \mathcal{I}_K^3}
			\left( 
			\frac{B^{1/2}}{\idealnorm{I_5}^{1/2}\idealnorm{I_6}^{1/2}\idealnorm{I_7}^{1/2}} +1
			\right).
		\end{equation*}
		We first sum over $I_7$. We use the third height condition in \Cref{height_function_on_desingularisation} and $\idealnorm{I_8}\ge \idealnorm{I_7}$.
		Then, the error term becomes
		\begin{equation*}
			\ll_{\bm{C}} \sum_{I_5,I_6\in\mathcal{I}_K} \left(\frac{B^{3/4}}{\idealnorm{I_5}^{1/2}\idealnorm{I_6}^{3/4}} + \frac{B^{1/2}}{\idealnorm{I_6}^{1/2}}\right)
			\ll_{\bm{C}} \sum_{I_6\in\mathcal{I}_K}\left(
			\frac{B}{\idealnorm{I_6}} + \frac{B}{\idealnorm{I_6}}\right)
			\ll_{\bm{C}} B\log B.
		\end{equation*}
		This proves the lemma.
	\end{proof}
	The next step is to replace the sums over $\eta_i$ by sums over the corresponding ideals $I_i$.
	
	\begin{lemma}\label{lemma_transformation_eta_into_ideals}
		For $i\in\lbrace 1,2\rbrace$, we have
		\[N_{8,i}(B) = \omega_Kh_K^{-1}\frac{2}{\sqrt{\lvert\Delta_K\rvert}}
		\sum_{\bm{I}^{(i)}} \Theta_8(\bm{I}^{(i)})V_8(\idealnorm{\bm{I}^{(i)}};B) + O(B\left(\log B\right)^{d_i}),\]
		where $d_1 = 3$, $d_2=1$, and the sum runs over all $(6-\#D_i)$-tuples of non-zero ideals of $\ringofintegers$.
	\end{lemma}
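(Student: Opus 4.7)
The plan is to substitute the asymptotic of \Cref{lemma_first_summation} into $N_{8,i}(B)=\frac{1}{\omega_K^6}\sum_{\bm{C}\in\mathcal{C}_i}\#M_{\bm{C},i}^{(8)}(B)$ and pass from element-level sums to ideal-level sums. Because the summand $\Theta_8(\bm{I}^{(i)})V_8(\idealnorm{\bm{I}^{(i)}};B)$ depends on $\bm{\eta}^{(i)}$ only through the ideal tuple $\bm{I}^{(i)}$, and each integral ideal $I_j=\eta_j\mathcal{O}_j^{-1}$ in the class $[\mathcal{O}_j^{-1}]$ has exactly $\omega_K=\lvert\ringofintegers^\times\rvert$ preimages in $\mathcal{O}_{j^*}$, I would replace $\sum_{\bm{\eta}^{(i)}\in\bm{\mathcal{O}}_*^{(i)}}$ by $\omega_K^{\lvert J_i\rvert}\sum_{\bm{I}^{(i)}}$ subject to the constraints $[I_j]=[\mathcal{O}_j^{-1}]$ for $j\in J_i$. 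Using $\lvert J_i\rvert=7-\#D_i$, the combined prefactor $\frac{1}{\omega_K^6}\cdot\frac{2\omega_K^{7-\#D_i}}{\sqrt{\lvert\Delta_K\rvert}}\cdot\omega_K^{\lvert J_i\rvert}$ collapses to $\frac{2\omega_K}{\sqrt{\lvert\Delta_K\rvert}}$, and the error $O_{\bm{C}}(B(\log B)^{d_i})$ from \Cref{lemma_first_summation}, summed over $\mathcal{C}_i$ (of cardinality bounded in terms of $K$), stays within $O(B(\log B)^{d_i})$.

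Next, I would pin down the range of class tuples coming from $\bm{C}\in\mathcal{C}_i$. A direct computation from \eqref{def_O_i} and the explicit parameterisation of $\mathcal{C}_i$ yields the identities $\mathcal{O}_4\mathcal{O}_7=\mathcal{O}_5\mathcal{O}_6$ for $i=1$ and $\mathcal{O}_5\mathcal{O}_6=\mathcal{O}_7$ for $i=2$, together with the observation that the map $\bm{C}\mapsto([\mathcal{O}_j^{-1}])_{j\in J_i}$ is a bijection from $\mathcal{C}_i$ onto the set of class tuples in $\mathrm{Cl}_K^{\lvert J_i\rvert}$ satisfying the single linear relation $[I_4][I_7]=[I_5][I_6]$ (for $i=1$) or $[I_7]=[I_5][I_6]$ (for $i=2$). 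Hence the double sum over $\bm{C}$ and over $\bm{I}^{(i)}$ in prescribed classes telescopes to $\sum_{\bm{I}^{(i)}\in R_i}\Theta_8 V_8$, where $R_i\subseteq\mathcal{I}_K^{\lvert J_i\rvert}$ is the corresponding class-restricted locus.

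The main obstacle is to show that this constrained sum equals $h_K^{-1}\sum_{\bm{I}^{(i)}}\Theta_8 V_8$ up to an error $O(B(\log B)^{d_i})$ (times a harmless $K$-dependent constant). I would carry this out via character orthogonality on $\mathrm{Cl}_K$: writing
\[\mathbf{1}_{R_i}(\bm{I}^{(i)})=\frac{1}{h_K}\sum_{\chi\in\widehat{\mathrm{Cl}_K}}\chi(I_4)\chi(I_7)\overline{\chi(I_5)\chi(I_6)}\]
for $i=1$ (and analogously for $i=2$) isolates the trivial character $\chi=1$ as the desired main term $h_K^{-1}\sum_{\bm{I}^{(i)}}\Theta_8 V_8$. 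For each nontrivial $\chi$, the Euler product structure of $\Theta_8$ together with the dependence of $V_8$ only on ideal norms factors the twisted sum, via a Mellin--Perron representation, into a finite product of Hecke $L$-functions $L(s_j,\chi)$; since such $L(s,\chi)$ are holomorphic at $s=1$ for nontrivial $\chi$, a standard contour shift saves an arbitrary number of logarithmic factors compared to the principal main term of order $B(\log B)^{\lvert J_i\rvert}$, and therefore absorbs all nontrivial-character contributions into $O(B(\log B)^{d_i})$. Multiplying $\frac{2\omega_K}{\sqrt{\lvert\Delta_K\rvert}}$ by $h_K^{-1}$ recovers the claimed leading constant $\omega_Kh_K^{-1}\frac{2}{\sqrt{\lvert\Delta_K\rvert}}$.
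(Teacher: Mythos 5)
Your proposal follows the same overall decomposition as the paper --- convert the element sums into ideal sums at the cost of powers of $\omega_K$, observe that as $\bm{C}$ runs over $\mathcal{C}_i$ the class tuples $([\mathcal{O}_j^{-1}])_{j\in J_i}$ sweep out exactly the tuples satisfying one linear relation in $\mathrm{Cl}_K$, and then trade that class constraint for a factor $h_K^{-1}$ --- but it justifies the last, crucial step by a different technique. The paper isolates the single constrained variable $I_7$ (the classes of the remaining $I_j$ are unconstrained once one sums over $\bm{C}\in\mathcal{C}_i$), asserts that the resulting sum over $I_7$ in a fixed class is asymptotically independent of that class, and delegates the proof to the machinery of \cite[Lemma 9.4, Proposition 7.2]{derenthal14_compos}, which treats sums of arithmetic functions over ideals in a fixed class by elementary means. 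You instead detect the constraint by orthogonality of characters of $\mathrm{Cl}_K$ and bound the nontrivial-character contributions via Hecke $L$-functions; since in your decomposition every variable is twisted by a nontrivial character, all the relevant $L$-factors are entire at $s=1$, so the required saving of one (for $i=1$) resp.\ two (for $i=2$) powers of $\log B$ over the principal term is indeed available. This is a legitimate, standard alternative and is more self-contained in spirit, but the analytic execution is only asserted: the several-variable Mellin transform of $V_8$ (whose region couples the variables), the correction Euler product multiplying $\prod_j L(s_j,\chi^{\pm 1})$, and the growth bounds permitting the contour shifts all need to be supplied, whereas the paper's route reduces to a one-variable equidistribution statement already proved in the cited reference. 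Two small corrections: the prefactor from \Cref{lemma_first_summation} is $2\omega_K^{\#D_i}/\sqrt{\lvert\Delta_K\rvert}$, not $2\omega_K^{7-\#D_i}/\sqrt{\lvert\Delta_K\rvert}$ (with your exponent the product would be $\omega_K^{8-2\#D_i}$ rather than the claimed $\omega_K$; with the correct one it is $\omega_K^{-6}\cdot\omega_K^{\#D_i}\cdot\omega_K^{7-\#D_i}=\omega_K$), and $I_j=\eta_j\mathcal{O}_j^{-1}$ ranges over the class $[\mathcal{O}_j]^{-1}$ rather than $[\mathcal{O}_j^{-1}]$'s inverse-free reading --- neither affects the conclusion.
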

	\begin{proof}
		Let us recall that $\mathcal{C} = \{P_1,\dots,P_{h_K}\}$. For $i=1,2$, by the definition of $N_{8,i}(B)$ and \Cref{lemma_first_summation} we have
		\begin{align*}
			N_{8,i}(B) &= \frac{1}{\omega_K^{6-\# D_i}}\frac{2}{\sqrt{\lvert \Delta_K\rvert}}
			\sum_{\bm{C}\in\mathcal{C}_i} \sum_{\bm{\eta}^{(i)}\in\bm{\mathcal{O}}_*^{(i)}}
			\Theta_8(\bm{I}^{(i)})V_8(\idealnorm{\bm{I}^{(i)}};B)\\
			&+ \sum_{\bm{C}\in\mathcal{C}_i}O_{\bm{C}}\left(B(\log B)^{d_i}\right)\\
			&= \frac{1}{\omega_K^{6-\#D_i}}\frac{2}{\sqrt{\lvert \Delta_K\rvert}}
			\sum_{j=1}^{h_K} \sum_{\substack{\bm{C}\in\mathcal{C}_i\\ [\mathcal{O}_{7^*}] = [P_j]}} \sum_{\bm{\eta}^{(i)}\in\bm{\mathcal{O}}_*^{(i)}}
			\Theta_8(\bm{I}^{(i)})V_8(\idealnorm{\bm{I}^{(i)}};B)\\
			&+O_{\bm{C}}\left(B(\log B)^{d_i}\right).
		\end{align*}
		In the inner sum, $\eta_7$ runs through all nonzero elements of $\mathcal{O}_7$. This implies that $I_7$ runs through all ideals $\neq 0$ in the ideal class of $P_j$, each ideal occurring $\omega_K$ times.
		We can bound the ideal norm of any of the ideals $I_7$ by $B$, due to the height conditions occurring in $V_8$.
		Hence,
		\begin{align*}
			N_{8,i}(B) = \frac{1}{\omega_K^{6-\#D_i-1}}\frac{2}{\sqrt{\lvert \Delta_K\rvert}} \sum_{j=1}^{h_K}
			\sum_{\substack{I_7\in [P_j]\\ \idealnorm{I_7}\le B}} 
			\sum_{\substack{\bm{C}\in\mathcal{C}_i\\ [\mathcal{O}_{7^*}] = [P_j]}} \sum_{\substack{(\eta_{3+i},\dots,\eta_6)\\\in\mathcal{O}_{(3+i)^*}\times\cdots\times\mathcal{O}_{6^*}}}
			\Theta_8(\bm{I}^{(i)})V_8(\idealnorm{\bm{I}^{(i)}};B)\\
			+O\left(B(\log B)^{d_i}\right).
		\end{align*}
		The sum over all $I_7\in [P_j]$ with $\idealnorm{I_7}\le B$ is independent on the choice of the ideal class $P_j$. Therefore, we can replace this sum by $h_K^{-1}\sum_{\substack{I_7\in \mathcal{I}_K\\ \idealnorm{I_7}\le B}} $ and obtain
		\begin{equation*}
			N_{8,i}(B) = \frac{2h_K^{-1}}{\omega_K^{6-\#D_i-1}\sqrt{\lvert \Delta_K\rvert}}
			\sum_{\substack{I_7\in \mathcal{I}_K\\ \idealnorm{I_7}\le B}} 
			\sum_{\bm{C}\in\mathcal{C}_i} \sum_{\substack{(\eta_{3+i},\dots,\eta_6)\\\in\mathcal{O}_{(3+i)^*}\times\cdots\times\mathcal{O}_{6^*}}} \Theta_8(\bm{I}^{(i)})V_8(\idealnorm{\bm{I}^{(i)}};B)
			+O\left(B(\log B)^{d_i}\right).
		\end{equation*}
		An analogous argument as in the proof of Lemma 9.4 in \cite{derenthal14_compos} yields the lemma for $i=1,2$.
	\end{proof}

	\subsection{The remaining summations}
	\begin{lemma}\label{lemma_remaining_summations}
		For $B>0$ we have
		\begin{align*}
			N_{8,1}(B) &= \frac{2}{\sqrt{\lvert\Delta_K\rvert}}\omega_K\rho_K^4h_K^{-1}
			\Theta_0^{(1)}
			V_{80}^{(1)}(B)
			+ O\left(B\left(\log B\right)^3\log\log B\right),\quad\text{and}\\
			N_{8,2}(B) &= \frac{2}{\sqrt{\lvert\Delta_K\rvert}}\omega_K\rho_K^3h_K^{-1}
			\Theta_0^{(2)}V_{80}^{(2)}(B)
			+ O\left(B\left(\log B\right)^{2}\log\left(\log B\right)\right),
		\end{align*}
		where 
		\begin{equation*}
			V_{80}^{(i)}(B) = \int_{1\le t_j\le B~\forall 1\le j<8 \text{ with } E_j\not\subset D_i} V_8(\bm{t}^{(i)};B)\mathrm{d}\bm{t}^{(i)},
		\end{equation*}
		and
		\begin{align}
			\Theta_0^{(1)} &= \prod_\mathfrak{p} \left( 1-\frac{1}{\idealnorm{\mathfrak{p}}}\right)^3\left(1+\frac{3}{\idealnorm{\mathfrak{p}}}\right),\quad\text{and} \label{Theta_01}\\
			\Theta_0^{(2)} &= \prod_\mathfrak{p} \left( 1 - \frac{1}{\idealnorm{\mathfrak{p}}}\right)^2\left( 1 + \frac{2}{\idealnorm{\mathfrak{p}}}\right), \label{Theta_02}
		\end{align}
		where the product runs over all prime ideals $\mathfrak{p}.$
	\end{lemma}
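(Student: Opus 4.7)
The plan is to start from the formula for $N_{8,i}(B)$ in \Cref{lemma_transformation_eta_into_ideals} and convert the remaining sum
\[
S_i(B) \;=\; \sum_{\bm{I}^{(i)}} \Theta_8(\bm{I}^{(i)}) \, V_8(\idealnorm{\bm{I}^{(i)}};B)
\]
over the $\#J_i$-tuple of non-zero ideals into the real integral $V_{80}^{(i)}(B)$, extracting the factors $\rho_K^{\#J_i}$ and $\Theta_0^{(i)}$ in the process. Since $\#J_1 = 4$ and $\#J_2 = 3$, this will produce exactly the powers $\rho_K^4$ and $\rho_K^3$ appearing in the conclusion, while the $h_K^{-1}$ already present in \Cref{lemma_transformation_eta_into_ideals} is preserved untouched (recall that $\rho_K$ is the residue of $\zeta_K$ at $s=1$ and hence already incorporates the class-number contribution).

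My first step is iterated Abel/partial summation over each coordinate of $\bm{I}^{(i)}$. Using the asymptotic $\#\{I \in \mathcal{I}_K : \idealnorm{I} \leq T\} = \rho_K T + O(\sqrt{T})$ available for imaginary quadratic $K$, each discrete sum over $I_j$ is replaced by an integral over a real variable $t_j$ weighted by $\rho_K$, picking up an error that I will control using divisor-type estimates such as $\sum_{\idealnorm{I} \leq X} 2^{\omega(I)} \ll X \log X$ of the kind stated in \cite{derenthal14_compos}, together with the mild smoothness of $V_8$ in each of its arguments.

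Simultaneously, the multiplicative function $\Theta_8$ must be repackaged into the Euler product $\Theta_0^{(i)}$. Since the local factor $\Theta_{8,\pprime}$ differs from $1$ by $O(1/\idealnorm{\pprime})$ on the generic configurations compatible with the coprimality condition \eqref{gcd-condition_eta}, the Dirichlet-type series
\[
\sum_{\bm{I}^{(i)}} \Theta_8(\bm{I}^{(i)}) \prod_{j \in J_i} \idealnorm{I_j}^{-s_j}
\]
admits, near $\bm{s} = (1,\ldots,1)$, a factorisation of the form $\prod_{j\in J_i} \zeta_K(s_j) \cdot H_i(\bm{s})$, with $H_i$ holomorphic and absolutely convergent in a neighbourhood of that point. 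Combining the normalisation factors $(1-1/\idealnorm{\pprime})^{\#J_i}$ produced by the $\#J_i$ Abel summations against $\zeta_K(s_j)$ with $H_i(\bm{1})$ yields precisely the Euler products \eqref{Theta_01} and \eqref{Theta_02}.

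The main obstacle is the bookkeeping of accumulated errors through the $\#J_i$ iterations. At each summation step the error contains weighted sums of the form $\sum_{\idealnorm{I} \leq X} \tau_k(I)/\idealnorm{I} \ll (\log X)^k$, and care is needed to lose at most one factor of $\log B$ overall; the additional $\log\log B$ in the final estimate appears when one of the ideal variables is forced to be small, which is the standard Selberg--Delange-type loss. The ordering of the summations (largest variable first) and the slow variation of $V_8$ in each argument are the essential analytic inputs. All of this mirrors the analogous computations for rational points in \cite{derenthal14_compos} and for integral points over $\QQ$ in \cite{derenthal_wilsch_21}; the only genuinely new features are the bookkeeping of the $\rho_K$- and $h_K$-factors traced above, which arises from working over an imaginary quadratic field of possibly non-trivial class number.
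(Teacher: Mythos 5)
Your proposal follows essentially the same route as the paper: the paper delegates the iterated partial summation and the averaging of $\Theta_8$ to \cite{derenthal14_compos} (a variant of Proposition 7.3 there for $s=0$, together with Lemma 2.8, after checking the hypothesis $V_8(\bm{t}^{(i)};B)\ll B/\prod_{j\in J_i}t_j$ coming from the height conditions), and what you describe — one factor $\rho_K$ per remaining ideal sum via the ideal-counting asymptotic, the $h_K^{-1}$ carried through unchanged, and the local averages of $\Theta_{8,\pprime}$ assembling into an Euler product — is exactly the content of those results. The only substance you assert rather than verify is the evaluation of the local factors from the table of $\Theta_{8,\pprime}$ in \Cref{lemma_first_summation}, which is the short computation that actually produces $\left(1-\idealnorm{\pprime}^{-1}\right)^3\left(1+3\idealnorm{\pprime}^{-1}\right)$ and $\left(1-\idealnorm{\pprime}^{-1}\right)^2\left(1+2\idealnorm{\pprime}^{-1}\right)$.
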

	\begin{proof}
		Under certain assumptions on the main term, \cite[Proposition 7.3]{derenthal14_compos} gives us a tool to handle the summations over the remaining variables at once. We begin with checking the necessary precondition on the main term.
		The fourth height condition for $i=1$ and the third height condition in the case $i=2$ in \Cref{height_function_on_desingularisation} yield
		\[V_8(\bm{t}^{(1)};B) \ll \frac{B}{t_4t_5t_6t_7} \text{ and } V_8(\bm{t}^{(2)};B) \ll \frac{B}{t_5t_6t_7}.\]
		Hence, the condition for $V$ in \cite[Section 7]{derenthal14_compos} for the case (a) with $s=0$ is satisfied. With an analogous argumentation to the cases (b) and (c) we obtain an analogous result to \cite[Proposition 7.3]{derenthal14_compos} for $s=0$, where we have to replace $r-1$ by $r$ in the exponent in the error term. With $s=0$ and $r=4$ in the case $i=1$, or $s=0$ and $r=3$ in the case $i=2$,respectively, we obtain the first part of the lemma.
		
		It remains to compute $\Theta_0^{(i)}$. Therefore, we use Lemma 2.8 in \cite{derenthal14_compos} for $l=1$:
		\begin{align*}
			\Theta_0^{(1)} &= \mathcal{A}(\Theta_8(\bm{I}^{(1)}),I_7,\ldots,I_4)\\
			&= \prod_{\mathfrak{p}}\sum_{L\subset\{4,5,6,7\}} \left(1-\frac{1}{\idealnorm{\mathfrak{p}}}\right)^{4-\lvert L\rvert}
			\left(\frac{1}{\idealnorm{\mathfrak{p}}}\right)^{\lvert L\rvert}\Theta_{8,\mathfrak{p}}(L)\\
			&= \prod_\mathfrak{p}\left(\left(1-\frac{1}{\idealnorm{\mathfrak{p}}}\right)^4 + 3\cdot\left(1-\frac{1}{\idealnorm{\mathfrak{p}}}\right)^3\cdot\frac{1}{\idealnorm{\mathfrak{p}}} \right.\\
			&\phantom{=} \left.+ \left(1-\frac{1}{\idealnorm{\mathfrak{p}}}\right)^3\cdot\frac{1}{\idealnorm{\mathfrak{p}}}\cdot\left(1-\frac{1}{\idealnorm{\mathfrak{p}}}\right)\right.\\
			&\phantom{=} \left.+\left(1-\frac{1}{\idealnorm{\mathfrak{p}}}\right)^2\cdot\left(\frac{1}{\idealnorm{\mathfrak{p}}}\right)^2\cdot\left(1-\frac{1}{\idealnorm{\mathfrak{p}}}\right)\right)\\
			&= \prod_\mathfrak{p} \left( 1-\frac{1}{\idealnorm{\mathfrak{p}}}\right)^3\left(1+\frac{3}{\idealnorm{\mathfrak{p}}}\right),
		\end{align*}
		and
		\begin{align*}
			\Theta_0^{(2)} &= \mathcal{A}(\Theta_8(\bm{I}^{(2)}),I_7,\ldots,I_5)\\
			&= \prod_{\mathfrak{p}}\sum_{L\subset\{5,6,7\}} \left(1-\frac{1}{\idealnorm{\mathfrak{p}}}\right)^{3-\lvert L\rvert}
			\left(\frac{1}{\idealnorm{\mathfrak{p}}}\right)^{\lvert L\rvert}\Theta_{8,\mathfrak{p}}(L)\\
			&= \prod_{\mathfrak{p}}\left( 1-\frac{1}{\idealnorm{\mathfrak{p}}}\right)^3 + 3\cdot\left(1-\frac{1}{\idealnorm{\mathfrak{p}}}\right)^2\frac{1}{\idealnorm{\mathfrak{p}}}\\
			&= \prod_{\mathfrak{p}}\left( 1- \frac{1}{\idealnorm{\mathfrak{p}}}\right)^2\left( 1+\frac{2}{\idealnorm{\mathfrak{p}}}\right).
		\end{align*}
		This completes the proof.
	\end{proof}
	
	We can use symmetries to compute $N_{7,i}(B)$. This allows us to combine the results for $N_{8,i}(B)$ and $N_{7,i}(B)$ to obtain a result for $N_i(B)$.
	
	\begin{prop}\label{lemma:N_i(B)_with_V_0}
		We have
		\begin{align*}
			N_1(B) &=  \left(\frac{2}{\sqrt{\lvert\Delta\rvert}}\right)^5\frac{1}{\omega_K^3}h_K^3 
			\Theta_0^{(1)} V_0^{(1)}(B)
			+O\left(B\left(\log B\right)^3\log(\log B)\right),\quad\text{and}\\
			N_2(B) &=  \left(\frac{2}{\sqrt{\lvert\Delta\rvert}}\right)^4\frac{1}{\omega_K^2}h_K^2 
			\Theta_0^{(2)} V_0^{(2)}(B)
			+O\left(B\left(\log B\right)^2\log(\log B)\right),
		\end{align*}
		where $\Theta_0^{(i)}$ for $i=1,2$ is given in \eqref{Theta_01}-\eqref{Theta_02} and
		\begin{align*}
			V_0^{(i)}(B) = \int\displaylimits_{\substack{\norminf{\eta_j}\ge 1~\forall 1\le j\le 8\text{ with }E_j\not\subset D_i,\\ 
					\mathcal{H}_i(\bm{\eta}^{(i)},\eta_8)\le B}}
			\norminf{\eta_5}^{-1}\mathrm{d}\eta_4\cdots\mathrm{d}\eta_8.
		\end{align*}
	\end{prop}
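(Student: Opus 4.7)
The plan is to write $N_i(B) = N_{8,i}(B) + N_{7,i}(B)$, apply \Cref{lemma_remaining_summations} to $N_{8,i}(B)$, establish the analogous estimate for $N_{7,i}(B)$, and then recombine the two integrals $V_{80}^{(i)}$ and $V_{70}^{(i)}$ into a single integral over a complex region which can be identified with $V_0^{(i)}$ via polar coordinates.

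The first step is to set up $N_{7,i}(B)$. Since $N_{7,i}(B)$ sums over tuples with $\idealnorm{I_7}>\idealnorm{I_8}$, the natural thing is to perform the first summation over $\eta_7$ rather than over $\eta_8$. For $i=1$, the simultaneous swap $(\eta_1,\eta_3)$, $(\eta_4,\eta_6)$, $(\eta_7,\eta_8)$ preserves $D_1=E_1+E_2+E_3$, the relation \eqref{torsor-equation_new}, the monomial set $M_1$, and the coprimality pattern in \Cref{fig:1-Dynkin-diagram}; hence it maps $M^{(8)}_{\bm{C},1}(B)$ bijectively onto the analogue of $M^{(7)}_{\bm{C},1}(B)$, and the computations of \Cref{lemma_first_summation}-\Cref{lemma_remaining_summations} yield the same main term with $V_{80}^{(1)}$ replaced by the integral $V_{70}^{(1)}$ defined with the condition $\norminf{\eta_7}>\norminf{\eta_8}$ (and the integrand still $1/t_5$, by the symmetry of the inner factor). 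For $i=2$ no such swap survives, but the same method of proof (summing first over $\eta_7$ using the third height bound $\norminf{\eta_6\eta_7\eta_8}\le u_{\bm{C},2}B$ in place of $\norminf{\eta_5\eta_7}\le u_{\bm{C},2}B$) runs through verbatim and produces the analogous estimate with $V_{80}^{(2)}$ replaced by $V_{70}^{(2)}$; the local factors $\Theta_{8,\pprime}$ are symmetric under $7\leftrightarrow 8$, so $\Theta_0^{(2)}$ is unchanged. Adding these contributions gives
\begin{equation*}
    N_i(B) = \tfrac{2}{\sqrt{\lvert\Delta_K\rvert}}\,\omega_K\,\rho_K^{r_i}\,h_K^{-1}\,\Theta_0^{(i)}\bigl(V_{80}^{(i)}(B)+V_{70}^{(i)}(B)\bigr) + O\bigl(B(\log B)^{d_i}\log\log B\bigr),
\end{equation*}
where $r_1=4$, $r_2=3$, $d_1=3$, $d_2=2$.

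The second step is to identify $V_{80}^{(i)}(B)+V_{70}^{(i)}(B)$ with $V_0^{(i)}(B)$ up to a factor. The two integrals cover complementary regions $\norminf{\eta_8}\ge\norminf{\eta_7}$ and $\norminf{\eta_7}>\norminf{\eta_8}$, so their sum equals the full integral of $t_5^{-1}$ over all $t_j\ge 1$ (for $j\in J_i$) and $\eta_8\in\CC$ with $\mathcal{H}_i\le B$ (the upper bounds $t_j\le B$ are implicit in $\mathcal{H}_i\le B$ given the other lower bounds, so may be dropped). Passing from the real variables $t_j$ to complex variables $\eta_j$ for $j\in J_i$ by polar coordinates $\eta_j=r_je^{i\theta_j}$ with $t_j=r_j^2$ and $dt_j=2r_j\,dr_j$, the integrand depends only on the moduli (since every monomial in $\mathcal{H}_i$ and the factor $\norminf{\eta_5}^{-1}=t_5^{-1}$ is a power of $\norminf{\eta_j}$), so each angular integration contributes $2\pi$. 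The Jacobian change $r_j\,dr_j=\tfrac{1}{2}dt_j$ yields
\begin{equation*}
    d\eta_j = r_j\,dr_j\,d\theta_j \mapsto \pi\,dt_j \qquad (j\in J_i),
\end{equation*}
so that
\begin{equation*}
    V_0^{(i)}(B) = \pi^{|J_i|}\bigl(V_{80}^{(i)}(B)+V_{70}^{(i)}(B)\bigr),
\end{equation*}
with $|J_1|=4$ and $|J_2|=3$; that is, $|J_i|=r_i$.

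Substituting this back and using $\rho_K=\tfrac{2\pi h_K}{\omega_K\sqrt{\lvert\Delta_K\rvert}}$ (since $s_1=0$, $s_2=1$, $R_K=1$) gives $(\rho_K/\pi)^{r_i}=2^{r_i}h_K^{r_i}/(\omega_K^{r_i}|\Delta_K|^{r_i/2})$, and the constants collapse to
\begin{equation*}
    \tfrac{2}{\sqrt{\lvert\Delta_K\rvert}}\,\omega_K\,h_K^{-1}\,(\rho_K/\pi)^{r_i} \;=\; \Bigl(\tfrac{2}{\sqrt{\lvert\Delta_K\rvert}}\Bigr)^{r_i+1}\tfrac{h_K^{r_i-1}}{\omega_K^{r_i-1}},
\end{equation*}
matching the exponents $5,3$ (for $i=1$) and $4,2$ (for $i=2$) in the statement. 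The bulk of the work is the first step: verifying that the reasoning of \Cref{lemma_first_summation} goes through cleanly when one sums first over $\eta_7$, especially in the asymmetric case $i=2$, where the upper bound used in the ball estimate and the resulting chain of summations over ideals must be redone. The polar-coordinate step and constant bookkeeping are then routine, requiring only care with the dimension count $|J_i|=r_i$.
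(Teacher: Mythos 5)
Your proposal is correct and follows essentially the same route as the paper: decompose $N_i(B)=N_{8,i}(B)+N_{7,i}(B)$, obtain the $N_{7,i}$ term via the Dynkin-diagram symmetry $1\leftrightarrow3$, $4\leftrightarrow6$, $7\leftrightarrow8$ (redoing the first-summation error estimate in the asymmetric case $i=2$), and then merge the two real-variable integrals into the complex integral $V_0^{(i)}$ by polar coordinates, with the same constant bookkeeping via $\rho_K/\pi = 2h_K/(\omega_K\sqrt{\lvert\Delta_K\rvert})$.
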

	\begin{proof}
		This proof works similarly as the proof of Lemma 9.9 in \cite{derenthal14_compos}. Let \[J^{(i)} = \lbrace j\in\{1,\ldots,8\}\mid E_j\not\subset D_i\rbrace.\]
		Using the substitution $\sqrt{t_j} = r_j$ for $j\in J^{(i)}$ and subsequently polar coordinates, we obtain 
		\begin{equation*}
			V_{80}^{(i)}(B) = \pi^{-(7-\#D_i)}\int\displaylimits_{\substack{\norminf{\eta_j}\ge 1~\forall j\in J^{(i)},\\\norminf{\eta_8}\ge \norminf{\eta_7} \\ 
					\mathcal{H}_i(\bm{\eta}^{(i)},\eta_8)\le B}}
			\norminf{\eta_5}^{-1}\mathrm{d}\eta_4\cdots\mathrm{d}\eta_8.
		\end{equation*}
		Therefore, 
		\begin{equation*}
			N_{8,i}(B) = \left(\frac{2}{\sqrt{\lvert\Delta_K\rvert}}\right)^{6-\# D_i+2}\left(\frac{h_K}{\omega_K}\right)^{6-\#D_i}
			\pi^{7-\#D_i}\Theta_0^{(i)}V_{80}^{(i)}(B) \\+ O\left(B\left(\log B\right)^{6-\#D_i}\log\log B\right).
		\end{equation*}
		Now, we compute $N_{7,i}(B)$. Here, we consider $\norminf{\eta_8} <\norminf{\eta_7}$.  We start by summing over $\eta_7$. For $i=1$, due to symmetry reasons, by changing the variable numbers $6\leftrightarrow4$, $7\leftrightarrow8$, $1\leftrightarrow3$ we can perform the first summation over $\eta_7$ analogously to \Cref{lemma_first_summation} and the remaining summations analogously to \Cref{lemma_remaining_summations}. We obtain 
		\begin{equation*}
			N_{7,1}(B) = \left(\frac{2}{\sqrt{\lvert\Delta_K\rvert}}\right)^5\frac{1}{\omega_K^3}h_K^3
			\pi^4\Theta_0^{(1)}V_{70}^{(1)}(B) + O\left(B\left(\log B\right)^3\log\log B\right),
		\end{equation*}
		where
		\begin{equation*}
			V_{70}^{(1)}(B) = \pi^{-4}\int\displaylimits_{\substack{\norminf{\eta_j}\ge 1~\forall j\in J^{(1)},\\ \norminf{\eta_8}\le \norminf{\eta_7} \\ 
					\mathcal{H}_i(\bm{\eta}^{(1)},\eta_8)\le B}}
			\norminf{\eta_5}^{-1}\mathrm{d}\eta_4\cdots\mathrm{d}\eta_8.
		\end{equation*}
		In the case $i=2$, we make the same change of variables as in the case $i=1$. Here, we need to make some adjustments, since now $\eta_6$ is a unit in $\ringofintegers$. For the first summation, we again proceed like in \Cref{lemma_first_summation} with a slightly different error term:
		The set $\mathcal{R}'_2(\eta_4,\eta_5,\eta_7;u_{\bm{C},2}B)\subset\CC$ of $\eta_8$ with $\mathcal{H}_2(\eta_4,\eta_5,\eta_7,\eta_8)\le u_{\bm{C},2}B$ and $\idealnorm{I_8}\ge \idealnorm{I_7}$ is contained in a ball of radius
		\[R_2'(\eta_4,\eta_5,\eta_7; u_{\bm{C},2}) = u_{\bm{C},2}^{1/2}\frac{B^{1/2}}{\norminf{\eta_4\eta_7}^{1/2}}\ll_{\bm{C}} \frac{B^{1/2}}{(\idealnorm{I_4}\idealnorm{I_7})^{1/2}}.\]
		Thus, the error term is
		\begin{equation*}
			\ll_{\bm{C}} \sum_{(\eta_4,\eta_5,\eta_7)\in\mathcal{O}_{4*}\times\mathcal{O}_{5*}\times\mathcal{O}_{7*}} 2^{\omega(I_4)}\left( \frac{B^{1/2}}{\idealnorm{I_4}^{1/2}\idealnorm{I_5}^{1/2}\idealnorm{I_7}^{1/2}}+1\right).
		\end{equation*}
		We can sum over the ideals $I_j\in\mathcal{I}_K$ instead, since $\absvalue{\ringofintegers^\times}<\infty$. Thus, the error term is
		\begin{equation*}
			\ll_{\bm{C}}\sum_{I_4,I_5,I_7\in\mathcal{I}_K}2^{\omega(I_4)}\left( \frac{B^{1/2}}{\idealnorm{I_4}^{1/2}\idealnorm{I_5}^{1/2}\idealnorm{I_7}^{1/2}}+1 \right).
		\end{equation*}
		We first sum over $I_5$ by using the second height condition in \Cref{height_function_on_desingularisation}. Then, the error terms is
		\begin{equation*}
			\ll_{\bm{C}} \sum_{I_4,I_7\in\mathcal{I}_K}2^{\omega(I_4)} \left( \frac{B^{3/4}}{\idealnorm{I_4}^{3/4}\idealnorm{I_7}^{1/2}}+\frac{B^{1/2}}{\idealnorm{I_4}^{1/2}}\right)
			\ll_{\bm{C}} \sum_{I_4\in\mathcal{I}_K} 2^{\omega(I_4)}\frac{B}{\idealnorm{I_4}}
			\ll_{\bm{C}} B \log B^2.
		\end{equation*}
		In the second estimation we used the third height condition in \Cref{height_function_on_desingularisation} and $\idealnorm{I_8}\ge \idealnorm{I_7}$. In the last estimation, we used \cite[Lemmas 2.4 and 2.9]{derenthal14_compos}. The remaining summations work similarly to \Cref{lemma_remaining_summations}.
		The lemma follows.
	\end{proof}

	\subsection{Computing $V_0^{(i)}(B)$}
	The next step is to replace the integral $V_0^{(i)}$ by another integral $V_0^{(i)'}$, which then turns out to be the product of a constant, the volume of a polytope, and $B(\log B)^{6-\#D_i}$, $i=1,2$. To this end, for $i=1,2$, define
	\begin{align*}
		&\mathcal{R}_0^{(i)}(B) = \lbrace (\bm{\eta}^{(i)},\eta_8) ~\mid~ \norminf{\eta_j}\ge 1 ~\forall j\rbrace,\\
		&\mathcal{R}_1^{(1)}(B) = \lbrace (\bm{\eta}^{(1)},\eta_8) ~\mid~ \norminf{\eta_j}\ge 1 ~\forall j, \norminf{\eta_4\eta_6\eta_8}\le B\rbrace,\\
		&\mathcal{R}_2^{(1)}(B) = \left\lbrace (\bm{\eta}^{(1)},\eta_8) ~\mid~ \norminf{\eta_j}\ge 1 ~\forall j, \norminf{\eta_4\eta_6\eta_8}\le B, \frac{\norminf{\eta_4\eta_5}}{\norminf{\eta_6\eta_8}}\le 1\right\rbrace,\\
		&\mathcal{R}_3^{(1)}(B) = \left\lbrace (\bm{\eta}^{(1)},\eta_8) ~\mid~ \norminf{\eta_j}\ge 1 ~\forall j\neq 7, \norminf{\eta_4\eta_6\eta_8}\le B, \frac{\norminf{\eta_4\eta_5}}{\norminf{\eta_6\eta_8}}\le 1\right\rbrace,\\
		&\mathcal{R}_1^{(2)}(B) = \lbrace (\bm{\eta}^{(2)},\eta_8) ~\mid~ \norminf{\eta_j}\ge 1 ~\forall j, \norminf{\eta_6\eta_8}\le B\rbrace,\\
		&\mathcal{R}_2^{(2)}(B) = \left\lbrace (\bm{\eta}^{(2)},\eta_8) ~\mid~ \norminf{\eta_j}\ge 1 ~\forall j, \norminf{\eta_6\eta_8}\le B, \frac{\norminf{\eta_5}}{\norminf{\eta_6\eta_8}}\le 1\right\rbrace,\quad\text{and}\\
		&\mathcal{R}_3^{(2)}(B) = \left\lbrace (\bm{\eta}^{(2)},\eta_8) ~\mid~ \norminf{\eta_j}\ge 1 ~\forall j\neq 7, \norminf{\eta_6\eta_8}\le B, \frac{\norminf{\eta_5}}{\norminf{\eta_6\eta_8}}\le 1\right\rbrace.
	\end{align*}
	For simplicity, let $\widetilde{\bm{\eta}}^{(i)} = (\bm{\eta}^{(i)},\eta_8)$.
	Define
	\begin{align*}
		V^{(i,j)}(B) &= \int\displaylimits_{\substack{\mathcal{H}_i(\widetilde{\bm{\eta}}^{(i)})\le B,\\ \widetilde{\bm{\eta}}^{(i)}\in\mathcal{R}_j^{(i)}(B)}}\norminf{\eta_5}^{-1}\mathrm{d}\widetilde{\bm{\eta}}^{(i)},\quad\text{and}\\
		V_0^{(i)'}(B) &= \int\displaylimits_{\substack{\mathcal{H}_i(\widetilde{\bm{\eta}}^{(i)})\le B,\\ \widetilde{\bm{\eta}}^{(i)}\in\mathcal{R}_3^{(i)}(B)}}\norminf{\eta_5}^{-1}\mathrm{d}\widetilde{\bm{\eta}}^{(i)}.
	\end{align*}
	
	\begin{lemma}\label{lemma:change_the_integration_area_of_V_0}
		For $B>0$ we have
		\begin{equation*}
			V_0^{(1)}(B) = V^{(1)'}_0(B) + O\left(B(\log B)^3\right),\text{ and}\quad
			V_0^{(2)}(B) = V^{(2)'}_0(B) + O\left(B(\log B)^2\right).
		\end{equation*}
		\begin{proof}
			It suffices to prove 
			\begin{align*}
				\left\lvert V^{(1,j)}(B)-V^{(1,j+1)}(B)\right\rvert &\ll B(\log B)^3,\quad\text{and}\\
				\left\lvert V^{(2,j)}(B)-V^{(2,j+1)}(B)\right\rvert &\ll B(\log B)^2
			\end{align*}
			for all $0\le j\le 2$, 
			as $V_0^{(i)}(B) = V^{(i,0)}(B)$, $V_0^{(i)'}(B) = V^{(i,3)}(B)$ and
			\begin{equation*}
				V_0^{(i)}(B) = V_0^{(i)}(B)-V^{(i,1)}(B)+V^{(i,1)}(B)-V^{(i,2)}(B)+V^{(i,2)}(B)-V^{(i,3)}(B)+V^{(i,3)}(B)
			\end{equation*}
			for $i=1,2$.
			At first, we notice that for $0\le j\le 2$, $i=1,2$, we have
			\begin{equation*}
				\left\lvert V^{(i,j)}(B)-V^{(i,j+1)}(B)\right\rvert \ll
				\int\displaylimits_{\substack{\widetilde{\bm{\eta}}^{(i)}\in(\mathcal{R}_j^{(i)}(B)\cup \mathcal{R}_{j+1}^{(i)}(B))\setminus (\mathcal{R}_j^{(i)}(B)\cap \mathcal{R}_{j+1}^{(i)}(B)),\\ \mathcal{H}_i(\widetilde{\bm{\eta}}^{(i)})\le B}}
				\norminf{\eta_5}^{-1}\mathrm{d}\widetilde{\bm{\eta}}^{(i)}.
			\end{equation*}
			
			We start with considering the case $i=1$.
			Let $j=0$: The fourth height condition and $\norminf{\eta_7}\ge 1$ imply $\norminf{\eta_4\eta_6\eta_8}\le B$. Hence, $V^{(1,0)}(B)=V^{(1,1)}(B)$.
			
			Let $j=1$: As $\mathcal{R}_2^{(1)}(B)\subseteq \mathcal{R}_1^{(1)}(B)$, we consider $\widetilde{\bm{\eta}}^{(1)}\in\mathcal{R}_1^{(1)}(B)\setminus\mathcal{R}_2^{(1)}(B)$.
			We have $\norminf{\eta_8} < \frac{\norminf{\eta_4\eta_5}}{\norminf{\eta_6}}$ (complement of the condition that we add in $\mathcal{R}_2^{(1)}(B)$), $\norminf{\eta_4^2}\le\frac{B}{\norminf{\eta_5\eta_7}}$ (first height condition) as well as $1\le\norminf{\eta_5}, \norminf{\eta_6},\norminf{\eta_7}\le B$ (this follows from the height conditions).
			This yields
			\begin{equation*}
				\int\displaylimits_{\substack{\mathcal{H}_i(\widetilde{\bm{\eta}}^{(1)})\le B\\ \widetilde{\bm{\eta}}^{(1)}\in \mathcal{R}_1^{(1)}(B)\setminus \mathcal{R}_2^{(1)}(B)}}
				\frac{1}{\norminf{\eta_5}}\mathrm{d}\widetilde{\bm{\eta}}^{(1)}
				\ll \int \frac{\norminf{\eta_4}}{\norminf{\eta_6}}\mathrm{d}\eta_4\cdots\mathrm{d}\eta_7
				\ll \int \frac{B}{\norminf{\eta_5\eta_6\eta_7}}\mathrm{d}\eta_5\mathrm{d}\eta_6\mathrm{d}\eta_7
				\ll B(\log B)^3,
			\end{equation*}
			where we integrated over $\eta_8$ in the first step and $\eta_4$ in the second step.
			
			Let $j=2$: As $\mathcal{R}_2^{(1)}(B)\subseteq \mathcal{R}_3^{(1)}(B)$, we consider $\widetilde{\bm{\eta}}^{(1)}\in \mathcal{R}_3^{(1)}(B)\setminus\mathcal{R}_2^{(1)}(B)$. 
			We have $\norminf{\eta_7}< 1$ (complement of the condition we remove in $\mathcal{R}_3^{(1)}$), $\norminf{\eta_4}\le \frac{B}{\norminf{\eta_6\eta_8}}$ and $1\le \norminf{\eta_5},\norminf{\eta_6},\norminf{\eta_8}\le B$ (this follows from the height conditions). 
			We obtain
			\begin{align*}
				\int\displaylimits_{\substack{\mathcal{H}_i(\widetilde{\bm{\eta}}^{(1)})\le B\\ \widetilde{\bm{\eta}}^{(1)}\in\mathcal{R}_3^{(1)}(B)\setminus\mathcal{R}_2^{(1)}(B)}}
				\frac{1}{\norminf{\eta_5}}\mathrm{d}\widetilde{\bm{\eta}}^{(1)}
				&\ll \int \frac{1}{\norminf{\eta_5}}\mathrm{d}\eta_4\mathrm{d}\eta_5\mathrm{d}\eta_6\mathrm{d}\eta_8\\
				&\ll \int \frac{B}{\norminf{\eta_5\eta_6\eta_8}} \mathrm{d}\eta_5\mathrm{d}\eta_6\mathrm{d}\eta_8\\
				&\ll B\left(\log B\right)^3,
			\end{align*}
			where we integrated over $\eta_7$ in the first step and $\eta_4$ in the second step.
			
			Now consider $i=2$. The case $j=0$ works analogously to the case above. The remaining two cases are similar, too, with different error terms:
			For $j=1$ we have
			\begin{equation*}
				\int\displaylimits_{\substack{\mathcal{H}_i(\widetilde{\bm{\eta}}^{(2)})\le B\\ \widetilde{\bm{\eta}}^{(2)}\in\mathcal{R}_1^{(1)}(B)\setminus\mathcal{R}_2^{(1)}(B)}}
				\frac{1}{\norminf{\eta_5}}\mathrm{d}\widetilde{\bm{\eta}}^{(2)}
				\ll \int \frac{1}{\norminf{\eta_6}}\mathrm{d}\eta_5\cdots\mathrm{d}\eta_7
				\ll B\int\frac{1}{\norminf{\eta_6\eta_7}}\mathrm{d}\eta_6\mathrm{d}\eta_7
				\ll B(\log B)^2,
			\end{equation*}
			where we used $\norminf{\eta_8}<\frac{\norminf{\eta_5}}{\norminf{\eta_6}}$ (complement of the condition that we add in $\mathcal{R}_2^{(2)}$) and $\norminf{\eta_5}\le \frac{B}{\norminf{\eta_7}}$ (first height condition).
			For $j=2$ we get 
			\begin{align*}
				\int\displaylimits_{\substack{\mathcal{H}_i(\widetilde{\bm{\eta}}^{(2)})\le B\\ \widetilde{\bm{\eta}}^{(2)}\in\mathcal{R}_3^{(1)}(B)\setminus\mathcal{R}_2^{(1)}(B)}}
				\frac{1}{\norminf{\eta_5}}\mathrm{d}\widetilde{\bm{\eta}}^{(2)}
				&\ll \int \frac{1}{\norminf{\eta_5}}\mathrm{d}\eta_5\mathrm{d}\eta_6\mathrm{d}\eta_8\\
				&\ll B\int\frac{1}{\norminf{\eta_5\eta_6}}\mathrm{d}\eta_5\mathrm{d}\eta_6\\
				&\ll B(\log B)^2
			\end{align*}
			by using $\norminf{\eta_7}<1$ and $\norminf{\eta_8}\le \frac{B}{\norminf{\eta_6}}$.
		\end{proof}
	\end{lemma}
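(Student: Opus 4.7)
The plan is to prove the two identities by telescoping: write
\begin{equation*}
V_0^{(i)}(B) - V_0^{(i)\prime}(B) = \sum_{j=0}^{2} \bigl(V^{(i,j)}(B) - V^{(i,j+1)}(B)\bigr),
\end{equation*}
using $V_0^{(i)}(B) = V^{(i,0)}(B)$ and $V_0^{(i)\prime}(B) = V^{(i,3)}(B)$, and then bound each consecutive difference by an integral over the symmetric difference of $\mathcal{R}_j^{(i)}$ and $\mathcal{R}_{j+1}^{(i)}$ intersected with the height region. Since each consecutive pair differs by a single additional or removed inequality, this symmetric difference is easy to describe, and the target error terms $O(B(\log B)^3)$ and $O(B(\log B)^2)$ should then drop out of direct integration using the height conditions from \Cref{height_function_on_desingularisation}.

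For the step $j=0\to1$, I expect no contribution at all: the condition $\norminf{\eta_4\eta_6\eta_8}\le B$ (respectively $\norminf{\eta_6\eta_8}\le B$ for $i=2$) added in $\mathcal{R}_1^{(i)}$ is automatic from the last (respectively third) height constraint and $\norminf{\eta_7}\ge 1$, so $V^{(i,0)}=V^{(i,1)}$. For $j=1\to 2$, the symmetric difference is the sub-region where the ratio condition fails, i.e.\ where $\norminf{\eta_8}<\norminf{\eta_4\eta_5}/\norminf{\eta_6}$ (resp.\ $\norminf{\eta_8}<\norminf{\eta_5}/\norminf{\eta_6}$). Integrating first in $\eta_8$ produces a factor that cancels $\norminf{\eta_5}^{-1}$ and leaves $\norminf{\eta_4}/\norminf{\eta_6}$ (resp.\ $\norminf{\eta_6}^{-1}$); then one uses the first height inequality $\norminf{\eta_4^2}\le B/\norminf{\eta_5\eta_7}$ (resp.\ $\norminf{\eta_5}\le B/\norminf{\eta_7}$) to integrate in $\eta_4$ (resp.\ $\eta_5$), producing a factor of $B$; the three (resp.\ two) remaining logarithmic integrals over $\eta_5,\eta_6,\eta_7$ (resp.\ $\eta_6,\eta_7$) with each variable running between $1$ and at most $B$ give $(\log B)^3$ and $(\log B)^2$ respectively. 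For $j=2\to 3$ the symmetric difference is the region $\norminf{\eta_7}<1$; one integrates over $\eta_7$ first (contributing a bounded factor), then over $\eta_4$ using $\norminf{\eta_4}\le B/\norminf{\eta_6\eta_8}$ from the fourth height condition (resp.\ over $\eta_8$ using $\norminf{\eta_8}\le B/\norminf{\eta_6}$), and the remaining logarithms come from integrating the variables kept bounded by $B$.

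The main obstacle I anticipate is purely bookkeeping: choosing the correct order of integration in each symmetric difference so that the available height bound for the first variable integrated cleanly produces a factor of $B$, while the remaining variables each contribute a single $\log B$ via the standard estimate $\int_{1\le \norminf{z}\le B} \norminf{z}^{-1}\,\mathrm{d}z \ll \log B$. One must also verify that in the $i=2$ case the region truly yields $(\log B)^2$ and not $(\log B)^3$, which forces the order of integration $\eta_8,\eta_4,\eta_5$ in the relevant step rather than the symmetric choice one would make for $i=1$. No serious analytic difficulty beyond this; once the orderings are fixed, all three transitions reduce to elementary estimates.
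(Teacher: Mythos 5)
Your proposal follows the paper's proof essentially verbatim: the same telescoping decomposition through $V^{(i,0)},\dots,V^{(i,3)}$, the same observation that the $j=0$ step contributes nothing, and the same orders of integration (first $\eta_8$ then $\eta_4$ resp.\ $\eta_5$ for $j=1$; first $\eta_7$ then $\eta_4$ resp.\ $\eta_8$ for $j=2$) using the same height conditions to extract the factor $B$ and the logarithms. The approach is correct and matches the paper's argument.
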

	
	Next, we show that we can remove the first and the third height condition in $V_0^{(1)'}$ and the first height condition in $V_0^{(2)'}$.
	
	\begin{cor} \label{cor_V_0}
		For $B>0$ we have
		\begin{align*}
			V_0^{(1)}(B) &= \int\displaylimits_{\substack{\norminf{\eta_i}\ge 1~\forall i\neq7\\ \norminf{\eta_4\eta_6\eta_8}\le B\\
					\norminf{\eta_4\eta_5}/\norminf{\eta_6\eta_8}\le 1\\        \norminf{\eta_5\eta_6^2\eta_8}\le B\\
					\norminf{\eta_4\eta_6\eta_7\eta_8}\le B}}
			\norminf{\eta_5}^{-1}\mathrm{d}\eta_4\cdots\mathrm{d}\eta_8
			+O\left(B\left(\log B\right)^3\right),\quad\text{and}\\
			V_0^{(2)}(B) &= \int\displaylimits_{\substack{\norminf{\eta_i}\ge 1~\forall i\neq7\\
					\norminf{\eta_6\eta_8}\le B\\
					\norminf{\eta_5}/\norminf{\eta_6\eta_8}\le 1\\
					\norminf{\eta_5^2\eta_6}\le B\\
					\norminf{\eta_6\eta_7\eta_8}\le B}}\norminf{\eta_5}^{-1}\mathrm{d}\eta_5\cdots\mathrm{d}\eta_8 + O\left( B\left(\log B\right)^2\right).
		\end{align*}
		\begin{proof}
			In the first case, with the property $\norminf{\eta_4\eta_5}\le \norminf{\eta_6\eta_8}$ we obtain
			\[\norminf{\eta_4^2\eta_5\eta_7} \le \norminf{\eta_4\eta_6\eta_7\eta_8}\le B. \]
			Therefore, the first height condition is redundant.
			The same property also yields
			\[\norminf{\eta_4\eta_5^2\eta_6} \le \norminf{\eta_5\eta_6^2\eta_8} \le B.\]
			Hence, the third height condition is redundant, too.
			Similarly to the first case, we obtain
			\[\norminf{\eta_5\eta_7}\le \norminf{\eta_6\eta_7\eta_8}\le B\]
			by using $\norminf{\eta_5}\le \norminf{\eta_6\eta_8}$.
			This makes the first height condition redundant in the second case.
			Combining these results with the previous lemma completes the proof.
		\end{proof}
	\end{cor}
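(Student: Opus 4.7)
The strategy is to combine the previous lemma with the observation that, inside the region $\mathcal{R}_3^{(i)}(B)$, some of the four (resp.\ three) height inequalities packaged in $\mathcal{H}_i(\widetilde{\bm\eta}^{(i)})\le B$ are automatically implied by the others. Concretely, \Cref{lemma:change_the_integration_area_of_V_0} already gives
\[
V_0^{(1)}(B) = V_0^{(1)'}(B)+O(B(\log B)^3),\qquad V_0^{(2)}(B) = V_0^{(2)'}(B)+O(B(\log B)^2),
\]
so I only need to show that in each $V_0^{(i)'}(B)$ the explicitly listed subset of height conditions in the claim is equivalent to the full height condition $\mathcal{H}_i\le B$ on the region $\mathcal{R}_3^{(i)}(B)$.

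For $i=1$, I multiply the integration constraint $\norminf{\eta_4\eta_5}\le \norminf{\eta_6\eta_8}$ by $\norminf{\eta_4\eta_7}$ to get
\[
\norminf{\eta_4^2\eta_5\eta_7}\le \norminf{\eta_4\eta_6\eta_7\eta_8}\le B,
\]
which absorbs the first height condition defining $\mathcal{H}_1$. Multiplying the same inequality by $\norminf{\eta_5\eta_6}$ yields
\[
\norminf{\eta_4\eta_5^2\eta_6}\le \norminf{\eta_5\eta_6^2\eta_8}\le B,
\]
which absorbs the third height condition. Thus in $V_0^{(1)'}(B)$ the remaining active constraints are exactly the ones listed in the statement, and the first claim follows from \Cref{lemma:change_the_integration_area_of_V_0}.

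For $i=2$, the integration constraint reads $\norminf{\eta_5}\le \norminf{\eta_6\eta_8}$; multiplying by $\norminf{\eta_7}$ gives
\[
\norminf{\eta_5\eta_7}\le \norminf{\eta_6\eta_7\eta_8}\le B,
\]
which makes the first height condition of $\mathcal{H}_2$ redundant. The remaining two height conditions $\norminf{\eta_5^2\eta_6}\le B$ and $\norminf{\eta_6\eta_7\eta_8}\le B$ coincide with those displayed in the statement, and invoking \Cref{lemma:change_the_integration_area_of_V_0} once more finishes the proof.

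There is no real obstacle here: the whole argument is a set of elementary multiplicative inequalities, and the heavy lifting (producing the error terms $O(B(\log B)^{6-\#D_i})$) has already been done in \Cref{lemma:change_the_integration_area_of_V_0}. The only care required is bookkeeping to verify that after removing the redundant conditions the remaining integrand and domain match those displayed in the corollary, in particular that the constraint $\norminf{\eta_7}\ge 1$ is indeed dropped in $\mathcal{R}_3^{(i)}(B)$ whereas all other lower bounds $\norminf{\eta_j}\ge 1$ are retained.
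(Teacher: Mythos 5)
Your proof is correct and follows essentially the same route as the paper: both invoke \Cref{lemma:change_the_integration_area_of_V_0} and then observe that the constraint $\norminf{\eta_4\eta_5}\le\norminf{\eta_6\eta_8}$ (resp.\ $\norminf{\eta_5}\le\norminf{\eta_6\eta_8}$) from $\mathcal{R}_3^{(i)}(B)$ renders the first and third (resp.\ first) height conditions redundant via the same multiplicative inequalities. Your explicit bookkeeping of which factor multiplies the constraint is just a slightly more detailed phrasing of the paper's argument.
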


	\begin{prop}\label{def_C_i}
		For $B>0$ we have
		\begin{align*}
			V_0^{(1)}(B) &= \frac{\pi^3}{4}\constant_1\cdot B\log B^4  + O\left(B(\log B)^3\right),\quad\text{and}\\
			V_0^{(2)}(B) &= \frac{\pi^2}{4}\constant_2\cdot B\log B^3  + O\left(B(\log B)^2\right)
		\end{align*}
		with
		\begin{align*}
			\constant_1 &= 4\pi^2\vol \left\lbrace (t_4,t_5,t_6,t_8)\in\RR^4_{\ge0}~
			\begin{array}{|c}
				t_5+2t_6+t_8\le 1,\\
				t_4+t_6+t_8\le 1,\\
				t_4+t_5-t_6-t_8\le 0
			\end{array}
			\right\rbrace=\frac{4\pi^2}{72}=\frac{\pi^2}{18},\quad\text{and}\\
			\constant_2 &= 4\pi^2\vol \left\lbrace (t_5,t_6,t_8)\in\RR^3_{\ge0}~
			\begin{array}{|c}
				t_6+t_8\le 1,\\
				t_5-t_6-t_8\le 0,\\
				2t_5+t_6\le 1
			\end{array} \right\rbrace = \frac{4\pi^2\cdot 11}{72} = \frac{11\pi^2}{18}.
		\end{align*}
	\end{prop}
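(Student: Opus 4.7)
The plan is to reduce $V_0^{(i)}(B)$ to the volume of an explicit rational polytope by two successive changes of variables. First, I would pass to polar coordinates $\eta_j = r_j e^{i\theta_j}$, using $d\eta_j = r_j\,dr_j\,d\theta_j$ and $\norminf{\eta_j} = r_j^2$. Since neither the integrand nor the integration region depends on any $\theta_j$, the angular integrations contribute $(2\pi)^5$ for $i=1$ and $(2\pi)^4$ for $i=2$. Then I would substitute $s_j = r_j^2 = \norminf{\eta_j}$, using $r_j\,dr_j = ds_j/2$ for $j \neq 5$ and $r_5^{-1}dr_5 = ds_5/(2s_5)$. The variable $\eta_7$, which in $\mathcal{R}_3^{(i)}$ has no lower bound and ranges over a disk of radius $(B/(s_4 s_6 s_8))^{1/2}$ for $i=1$ (resp.\ $(B/(s_6 s_8))^{1/2}$ for $i=2$), can be integrated out explicitly, yielding a factor $\pi B/(s_4 s_6 s_8)$ (resp.\ $\pi B/(s_6 s_8)$). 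Collecting constants,
\[
V_0^{(1)}(B) = \pi^5 B \int_{\mathcal{D}_1}\frac{ds_4\, ds_5\, ds_6\, ds_8}{s_4 s_5 s_6 s_8}, \qquad V_0^{(2)}(B) = \pi^4 B \int_{\mathcal{D}_2}\frac{ds_5\, ds_6\, ds_8}{s_5 s_6 s_8},
\]
where $\mathcal{D}_i$ is cut out by $s_j \ge 1$ together with the remaining height constraints.

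Next I would apply the logarithmic substitution $t_j = (\log s_j)/\log B$, under which $ds_j/s_j = (\log B)\,dt_j$. The condition $s_j \ge 1$ becomes $t_j \ge 0$, and every multiplicative constraint becomes an affine inequality in the $t_j$: for $i=1$, $s_4 s_6 s_8 \le B$ becomes $t_4 + t_6 + t_8 \le 1$, $s_4 s_5 \le s_6 s_8$ becomes $t_4 + t_5 - t_6 - t_8 \le 0$, and $s_5 s_6^2 s_8 \le B$ becomes $t_5 + 2t_6 + t_8 \le 1$; the analogous translation applies in the case $i=2$. The resulting regions are precisely the polytopes $\mathcal{P}_1$ and $\mathcal{P}_2$ appearing in the statement, so
\[
V_0^{(1)}(B) = \pi^5 B (\log B)^4 \vol(\mathcal{P}_1),\qquad V_0^{(2)}(B) = \pi^4 B (\log B)^3 \vol(\mathcal{P}_2).
\]
Using $\pi^5 = (\pi^3/4)(4\pi^2)$ and $\pi^4 = (\pi^2/4)(4\pi^2)$ matches the claimed shape with $\constant_i = 4\pi^2 \vol(\mathcal{P}_i)$, and the error term is inherited from \Cref{cor_V_0}.

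Finally, I would verify the polytope volumes by direct integration. For $\mathcal{P}_2$, slicing along $t_6$ and integrating first over $t_8 \in [0, 1-t_6]$ and then $t_5 \in [0, \min\{t_6+t_8,\,(1-t_6)/2\}]$, the upper bound on $t_5$ switches at $t_8 = (1-3t_6)/2$, which is admissible only for $t_6 \le 1/3$. Piecing together the resulting integrals yields $\vol(\mathcal{P}_2) = 11/72$, and an analogous case split in one additional dimension gives $\vol(\mathcal{P}_1) = 1/72$. The main obstacle, such as it is, is purely combinatorial: keeping careful track of which constraints are binding in each chamber of the case analysis so that the iterated integrations are set up correctly; the rest is bookkeeping.
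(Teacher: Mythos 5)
Your proposal is correct and follows essentially the same route as the paper: starting from the simplified region of \Cref{cor_V_0}, integrate out $\eta_7$ over its disk, pass to polar coordinates and substitute $s_j=\norminf{\eta_j}$ to get $\pi^5 B\int \mathrm{d}s_4\cdots\mathrm{d}s_8/(s_4s_5s_6s_8)$ (resp.\ $\pi^4 B\cdots$), then set $s_j=B^{t_j}$ to obtain $B(\log B)^{4}$ (resp.\ $B(\log B)^{3}$) times the stated polytope volume. The only cosmetic difference is that you verify the volumes $1/72$ and $11/72$ by hand via an iterated-integral case analysis (which checks out), whereas the paper computes them with SageMath.
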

	\begin{proof}
		We use the representation of $V_0^{(1)}(B)$ in the previous \Cref{cor_V_0}. Integrating over $\eta_7$ by using $\norminf{\eta_7}\le \frac{B}{\norminf{\eta_4\eta_6\eta_8}}$ yields
		\begin{equation*}
			V_0^{(1)}(B)=
			\pi\int\displaylimits_{\substack{\norminf{\eta_i}\ge 1~\forall i\neq7\\ 
					\norminf{\eta_4\eta_6\eta_8}\le B\\
					\norminf{\eta_4\eta_5}/\norminf{\eta_6\eta_8}\le 1\\
					\norminf{\eta_5\eta_6^2\eta_8}\le B}}
			\frac{B}{\norminf{\eta_4\eta_5\eta_6\eta_8}}\mathrm{d}\eta_4\mathrm{d}\eta_5\mathrm{d}\eta_6\mathrm{d}\eta_8
			+O\left(B(\log B)^3\right).
		\end{equation*}
		Then, we change the variables into polar coordinates, i.e.\ $\eta_j = r_j e^{2\pi i\varphi_j}$.
		Subsequently, we substitute $r_j = \sqrt{s_j}$. We obtain 
		\begin{align*}
			V_0^{(1)}(B) &= \pi\cdot B \int_0^{2\pi}\mathrm{d}\varphi_4\mathrm{d}\varphi_5\mathrm{d}\varphi_6\mathrm{d}\varphi_8
			\int\displaylimits_{\substack{ r_i\ge 1~\forall i\neq 7\\
					\left( r_4r_6r_8\right)^2\le B\\
					r_4r_5/ r_6r_8\le 1\\
					\left( r_5r_6^2r_8\right)^2\le B}}
			\frac{1}{ r_4r_5r_6r_8}
			\mathrm{d}r_4\mathrm{d}r_5\mathrm{d}r_6\mathrm{d}r_8
			+O\left(B(\log B)^3\right)\\
			&= \pi^5\cdot B\cdot \frac{2^4}{2^4}
			\int\displaylimits_{\substack{s_i\ge 1~\forall i\neq 7\\
					s_4s_6s_8\le B\\
					s_4s_5/s_6s_8\le 1\\
					s_5s_6^2s_8\le B}}
			\frac{1}{\left(s_4s_5s_6s_8\right)^{1/2}}\left(s_4s_5s_6s_8\right)^{-1/2}\mathrm{d}s_4\mathrm{d}s_5\mathrm{d}s_6\mathrm{d}s_8
			+O\left(B(\log B)^3\right)\\
			&= \pi^5\cdot B 
			\int\displaylimits_{\substack{s_i\ge 1~\forall i\neq 7\\
					s_4s_6s_8\le B\\
					s_4s_5/s_6s_8\le 1\\
					s_5s_6^2s_8\le B}}
			\frac{1}{s_4s_5s_6s_8}\mathrm{d}s_4\mathrm{d}s_5\mathrm{d}s_6\mathrm{d}s_8
			+O\left(B(\log B)^3\right).
		\end{align*}
		Finally, by substituting $s_i = B^{t_i}$, we obtain 
		\begin{align*}
			V_0^{(1)}(B) &= \pi^5\cdot B\log B^4\int\displaylimits_{\substack{t_i\ge 0\\
					t_5+2t_6+t_8\le 1\\
					t_4+t_6+t_8\le 1\\
					t_4+t_5-t_6-t_8\le 0}}
			\mathrm{d}t_4\mathrm{d}t_5\mathrm{d}t_6\mathrm{d}t_8
			+O\left(B\log B^3\right)\\
			&= \frac{\pi^3}{4}\cdot B\log B^4\constant_1+O\left(B\log B^3\right).
		\end{align*}
		The proof for $V_0^{(2)}(B)$ works similar.
	\end{proof}
	
	This completes the proof of \Cref{main_theorem}.

	\section{The leading constant}\label{section: The leading constant}
	This section is based on section 6 in \cite{derenthal_wilsch_21}. We show that \eqref{N_i(B)_minimal_desing} holds, that means, that \Cref{main_theorem} can be reinterpreted in the framework for interpreting the asymptotic behaviour of the number of integral points of bounded height.
	For the finite part of the leading constant \eqref{const_fin}, we have to compute $\pprime$-adic Tamagawa volumes $\tau_{(\widetilde{S},D_i),\pprime}(\widetilde{\mathcal{U}}_i(\localringofintegers))$, which are defined in \cite[§§ 2.1.10, 2.4.3]{chambertloir_tschinkel_2010}. 
	In our case, these measures are similar to the usual Tamagawa volumes, which are studied in the context of rational points, except for factors $\norm{1_{D_i}}_\pprime$, which are constant and equal to 1 on the set of $\pprime$-adic integral points at all finite places.
	The analogous volumes over the archimedean places would be infinite, when we evaluate them on the full space of complex points. Instead, \emph{residue measures} $\tau_{i,D_A,\infty}$ supported on the minimal strata $D_A(\CC)$ of the boundary divisors show up in the leading constant \eqref{const_infty}, cf. \cite[§ 2.1.12]{chambertloir_tschinkel_2010}.
	We can interpret them as a density function for the set of integral points, cf. \cite[§ 3.5.8]{chambertloir_tschinkel_2012}, or the leading constant of an asymptotic expansion of the volume of \emph{height balls} with respect to $\tau_{(\widetilde{S},D_i),\infty}$, cf. \cite[Theorem 4.7]{chambertloir_tschinkel_2010}.
	
	Further, the rational factors $\alpha_{i,A}$, appearing in \eqref{const_infty}, have to be computed, one for each minimal stratum $A$ of the boundary $D_i$. 
	
	We start by computing the Tamagawa volumes. To this end, we work with the chart 
	\begin{align*}
		f\colon V'=\widetilde{S}\setminus V(\eta_1\eta_2\eta_3\eta_4\eta_5\eta_6)&\rightarrow \mathbb{A}_K^2\\
		(\eta_1:\eta_2:\eta_3:\eta_4:\eta_5:\eta_6:\eta_7:\eta_8:\eta_9)&\mapsto
		\left(\frac{\eta_4}{\eta_2\eta_3\eta_5\eta_6}\cdot\eta_7,\frac{\eta_6}{\eta_1\eta_2\eta_4\eta_5}\cdot\eta_8\right).
	\end{align*}
	Its inverse $g\colon\mathbb{A}_K^2\rightarrow\widetilde{S}$ is given by
	\begin{equation*}
		(x,y)\mapsto (1:1:1:1:1:1:x:y:-x-y).
	\end{equation*}
	An easy computation shows that the two elements \[\frac{\eta_4}{\eta_2\eta_3\eta_5\eta_6}\cdot\eta_7, \quad \text{ and }\quad\frac{\eta_6}{\eta_1\eta_2\eta_4\eta_5}\cdot\eta_8\]
	have degree $0$ in the field of fractions of the Cox ring. Thus, they define a rational map which is invariant under the torus action and descends to $\widetilde{S}$.
	
	\begin{lemma}
		For any prime ideal $\mathfrak{p}\subseteq\ringofintegers$, the images of the sets of $\mathfrak{p}$-adic integral points are
		\begin{align*}
			f(\widetilde{\mathcal{U}_1}(\mathcal{O}_{K,\mathfrak{p}})\cap V'(K_\mathfrak{p})) &= \lbrace (x,y)\in K_\mathfrak{p}^2\mid \pabsvalue{xy}\le 1 \text{ or }\pabsvalue{x+y}\le 1\rbrace,\quad\text{and}\\
			f(\widetilde{\mathcal{U}_2}(\mathcal{O}_{K,\mathfrak{p}})\cap V'(K_\mathfrak{p})) &= \lbrace (x,y)\in K_\mathfrak{p}^2\mid (\pabsvalue{y}\le 1\text{ and }\pabsvalue{xy}\le 1) \text{ or }\pabsvalue{x+y}\le 1\rbrace.
		\end{align*}
	\end{lemma}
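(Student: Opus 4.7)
The plan is to unpack the condition ``$g(x,y) \in \widetilde{\mathcal{U}}_i(\mathcal{O}_{K,\mathfrak{p}})$'' using the torsor-level characterisation of $\mathfrak{p}$-adic integrality from the Remark following \Cref{lemma_nicer_representation}. Starting from the tautological representative $(1,1,1,1,1,1,x,y,-x-y)$ of $g(x,y)$ on the torsor (which satisfies \eqref{torsor-equation_new} since $x + y + (-x-y) = 0$), I will look for some $\bm{t} = (t_0,\ldots,t_5) \in (K_\mathfrak{p}^\times)^6$ whose action produces a tuple $\bm{\eta}'$ with non-negative $\mathfrak{p}$-adic valuations, the required unit components, and the pairwise coprimality modulo $\mathfrak{p}$ dictated by \Cref{fig:1-Dynkin-diagram}.

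Setting $a_j := v_\mathfrak{p}(t_j)$ and $b_7,b_8,b_9 := v_\mathfrak{p}(x),v_\mathfrak{p}(y),v_\mathfrak{p}(x+y)$, the unit conditions $v_\mathfrak{p}(\eta_j') = 0$ for $j=1,2,3$ (for $i=1$) — or $j=1,2,3,4$ (for $i=2$) — become linear equations in the $a_j$. Solving these leaves $i=1$ with three free parameters $\alpha=a_4,\gamma=a_3,\beta=a_5$, while $i=2$ additionally enforces $\alpha = 0$. The remaining requirements $v_\mathfrak{p}(\eta_j')\ge 0$ for $j=4,\ldots,9$ reduce to $\alpha,\beta,\gamma\ge 0$ together with the three inequalities $\gamma + \beta - \alpha + b_7 \ge 0$, $\gamma + \alpha - \beta + b_8 \ge 0$, and $\alpha + \beta + b_9 \ge 0$.

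The coprimality conditions for the three pairs $(E_4,E_5),(E_4,E_6),(E_5,E_6)$ translate to $\min(\alpha,\gamma) = \min(\alpha,\beta) = \min(\beta,\gamma) = 0$, so at most one of $\alpha,\beta,\gamma$ is positive. Proceeding by a four-case split (one of the three is positive, or all are zero), the remaining coprimality pairs $(E_4,E_8),(E_4,E_9),(E_5,E_7),(E_5,E_8),(E_6,E_7),(E_6,E_9)$ force equalities relating the $b_j$: for instance, in the case $\gamma>0$, $\alpha=\beta=0$ one finds $b_7=b_8=-\gamma$ with $b_9\ge 0$, corresponding to $\pabsvalue{x}=\pabsvalue{y}>1$ and $\pabsvalue{x+y}\le 1$. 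The three other cases yield respectively $\pabsvalue{x},\pabsvalue{y},\pabsvalue{x+y}\le 1$; or $\pabsvalue{xy}\le 1$ with $\pabsvalue{y}>1$; or $\pabsvalue{xy}\le 1$ with $\pabsvalue{x}>1$. The last of these is excluded when $i=2$ because it requires $\alpha>0$.

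It remains to verify that the union of the feasible cases is exactly the right-hand side. The forward implication can be read directly off each case. For the converse, given $(x,y)$ satisfying the stated condition, I will use the ultrametric inequality to assign it to exactly one case: the sign pattern of $b_7,b_8,b_9$ together with the non-Archimedean identity $\pabsvalue{x+y}=\max(\pabsvalue{x},\pabsvalue{y})$ when these differ rules out any ambiguity (e.g.\ $\pabsvalue{x}>1>\pabsvalue{y}$ forces $\pabsvalue{x+y}=\pabsvalue{x}>1$, so the condition $\pabsvalue{x+y}\le 1$ cannot apply in that regime). The main obstacle I anticipate is not the algebra of valuations but the bookkeeping — keeping track of which coprimality condition rules out what and making sure the case analysis is exhaustive in both directions, particularly distinguishing $i=1$ from $i=2$ where the case $\alpha>0$ is forbidden and accounts for the asymmetry between $x$ and $y$ in the second set.
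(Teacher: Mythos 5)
Your valuation-theoretic approach is sound and genuinely different from the paper's proof. The paper argues in two separate directions: for the forward inclusion it computes $xy$ and $x+y$ explicitly in Cox coordinates and uses the coprimality conditions to deduce the stated inequalities, and for the reverse inclusion it exhibits, in each of several regimes of $(\pabsvalue{x},\pabsvalue{y})$, an explicit torsor point (e.g.\ $\eta_4=1/y$, $\eta_7=xy$, $\eta_9=-1-x/y$ when $\pabsvalue{x}\le 1<\pabsvalue{y}$) and checks the torsor equation and coprimality by hand. Your method instead turns the existence of a suitable twist/torus element into a linear feasibility problem in the valuations $a_j=v_\pprime(t_j)$, which mechanises both directions at once; the price is that you lean on the unproved characterisation \eqref{integrality_conditions_mod_p} from the remark after \Cref{lemma_nicer_representation}, whereas the paper's constructions verify integrality directly. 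Both routes arrive at the same four-case decomposition, and your feasibility analysis (at most one of $\alpha,\beta,\gamma$ positive, with the remaining coprimality pairs pinning down $b_7,b_8,b_9$) is correct.

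There is, however, one concrete slip you must fix before writing this up: you have the excluded case for $i=2$ backwards. With your conventions ($\alpha=v_\pprime(\eta_4')$, $\beta=v_\pprime(\eta_6')$), the case $\alpha>0$ forces $v_\pprime(\eta_8')=v_\pprime(\eta_9')=0$ via the pairs $(E_4,E_8)$ and $(E_4,E_9)$, giving $b_8=b_9=-\alpha<0$ and $b_7\ge\alpha>0$, i.e.\ $\pabsvalue{y}>1$, $\pabsvalue{x}<1$, $\pabsvalue{xy}\le 1$; symmetrically, $\beta>0$ is the case $\pabsvalue{x}>1$, $\pabsvalue{y}<1$, $\pabsvalue{xy}\le 1$. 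So the case killed by the extra unit condition $\eta_4\in\localringofintegers^\times$ for $i=2$ is ``$\pabsvalue{xy}\le 1$ with $\pabsvalue{y}>1$'', not ``with $\pabsvalue{x}>1$'' as you wrote. This matters: keeping the $\pabsvalue{x}>1$ case and discarding the $\pabsvalue{y}>1$ case is exactly what produces the asymmetric description $(\pabsvalue{y}\le 1$ and $\pabsvalue{xy}\le 1)$ or $\pabsvalue{x+y}\le 1$ in the lemma; swapping them would yield the wrong set. Your justification (``because it requires $\alpha>0$'') is correct — only the identification of which inequality regime that is got transposed.
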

	\begin{proof}
		Consider the image \[(x,y) = \left(\frac{\eta_4}{\eta_2\eta_3\eta_5\eta_6}\cdot\eta_7,\frac{\eta_6}{\eta_1\eta_2\eta_4\eta_5}\cdot\eta_8\right)\]
		of an integral point $\rho(\eta_1,\ldots,\eta_9)\in \widetilde{\mathcal{U}}_1(\localringofintegers)$ under $f$.
		We have
		\begin{equation*}
			x\cdot y= \frac{\eta_7\eta_8}{\eta_1\eta_2^2\eta_3\eta_5^2}, \quad\text{ and }\quad
			x+y = \frac{\eta_1\eta_4^2\eta_7+\eta_3\eta_6^2\eta_8}{\eta_1\eta_2\eta_3\eta_4\eta_5\eta_6} = \frac{-\eta_5\eta_9}{\eta_1\cdots\eta_6}.
		\end{equation*}
		Since $\eta_1,\eta_2,\eta_3\in\ringofintegers^\times$ on $\widetilde{\mathcal{U}}_1$, we obtain
		\begin{equation*}
			\lvert xy\rvert_\mathfrak{p} = \left\lvert\frac{\eta_7\eta_8}{\eta_5^2}\right\rvert_\mathfrak{p},
			\quad\text{ and }\quad
			\lvert x+y\rvert_\mathfrak{p} = \left\lvert\frac{\eta_9}{\eta_4\eta_6}\right\rvert_\mathfrak{p}
		\end{equation*}
		for all prime ideals $\mathfrak{p}.$
		
		Assume $\lvert xy\rvert_\mathfrak{p}>1$. Then, $\eta_5\not\in\mathcal{O}_{K,\mathfrak{p}}^\times.$ Due to the coprimality conditions in \Cref{fig:1-Dynkin-diagram}, we get $\eta_i\in\mathcal{O}_{K,\mathfrak{p}}^\times$ for all $i=1,\ldots,4,6\ldots,8$. This yields
		\[\lvert x+y\rvert_\mathfrak{p} = \lvert\eta_9\rvert_\mathfrak{p} \le 1.\]
		
		On the other hand, let us consider a point $(x,y)\in K_\mathfrak{p}^2$ with $\lvert xy\rvert_\mathfrak{p}\le 1$ or $\lvert x+y\rvert_\mathfrak{p}\le 1$. We want to construct an integral point $(\eta_1,\ldots,\eta_9)$ on the torsor with $f(\rho(\eta_1,\ldots,\eta_9)) = (x,y)$.
		
		If $\lvert xy\rvert_\mathfrak{p}\le 1$, we distinguish three cases:
		\begin{enumerate}
			\item If $\lvert x\rvert_\mathfrak{p}\le 1$ and $\lvert y\rvert_\mathfrak{p}\le 1$, let $\eta_7=x$, $\eta_8 = y$, $\eta_9 = x-y$, and the remaining coordinates be $1$. Obviously, the coprimality conditions are satisfied. Further, we have $f(\rho(\eta_1,\ldots,\eta_9)) = (x,y)$ and the torsor equation is satisfied.
			
			\item If $\lvert x\rvert_\mathfrak{p}\le 1$ and $\lvert y\rvert_\mathfrak{p}>1$, let $\eta_4 = 1/y$, $\eta_7=xy$, $\eta_9 = -1-x/y$, and the remaining coordinates be $1$.
			Since $\left\lvert\frac{x}{y}\right\rvert_\mathfrak{p}\le \left\lvert\frac{1}{y}\right\rvert_\mathfrak{p}<1$, we have $\eta_9\in-1+\mathfrak{p}\mathcal{O}_{K,\mathfrak{p}}\subseteq \mathcal{O}_{K,\mathfrak{p}}^\times$ and the coprimality conditions hold.
			
			\item If $\lvert x\rvert_\mathfrak{p}>1$ and $\lvert y\rvert_\mathfrak{p}\le 1$, let $\eta_6 = 1/x$, $\eta_8 = xy$, $\eta_9 = -1-y/x$, and the remaining coordinates be $1$.
			Since $\left\lvert\frac{y}{x}\right\rvert_\mathfrak{p}\le \lvert\frac{1}{x}\rvert_\mathfrak{p}<1$, we have $\eta_9\in -1+\mathfrak{p}\mathcal{O}_{K,\mathfrak{p}}\subseteq\mathcal{O}_{K,\mathfrak{p}}^\times,$
			and thus the coprimality conditions are satisfied.
		\end{enumerate}
		If $\lvert xy\rvert_\mathfrak{p}>1$, we have $\lvert x\rvert_\mathfrak{p}>1$ and $\lvert y\rvert_\mathfrak{p}>1$: Assume $\lvert x\rvert_\mathfrak{p}\le1$ and $\lvert y\rvert_\mathfrak{p}>1$. Then, we obtain 
		\[\lvert x+y\rvert_\mathfrak{p} = \max\lbrace\lvert x\rvert_\mathfrak{p},\lvert y\rvert_\mathfrak{p}\rbrace = \lvert y\rvert_\mathfrak{p}>1,\]
		which is a contradiction to $\lvert x+y\rvert_\mathfrak{p}\le 1$.
		The same argument works for $\pabsvalue{x}>1$ and $\pabsvalue{y}\le1$.
		Let $\eta_5 = 1/y$, $\eta_9 = -x-y$, $\eta_7 = -\eta_5\eta_9-1$, and the remaining coordinates be $1$.
		Then, 
		\[\frac{\eta_7}{\eta_5} = (-\eta_5\eta_9-1)\cdot\frac{1}{\eta_5} = -\eta_9-\frac{1}{\eta_5} = x+y-y=x.\] 
		Hence, $f(\rho(\eta_1,\ldots,\eta_9))=(x,y).$ Further, as $(-\eta_5\eta_9-1)+1+\eta_5\eta_9=0$, the torsor equation holds.
		It remains to check whether $\eta_7\in\mathcal{O}_{K,\mathfrak{p}}^\times$.
		It is $\pabsvalue{\eta_9}=\absvalue{x+y}\le1.$ Hence, $\pabsvalue{\eta_5\eta_9}
		\le \pabsvalue{\eta_5}<1.$ Therefore, $\eta_7\in -1+\mathfrak{p}\mathcal{O}_{K,\mathfrak{p}}\subseteq \mathcal{O}_{K,\mathfrak{p}}^\times$ and the coprimality conditions are satisfied.
		
		Now, let $(x,y)$ be the image of an integral point $\rho(\eta_1,\ldots,\eta_9)\in\widetilde{\mathcal{U}}_2(\localringofintegers)$. If $\pabsvalue{y}>1$, we have $\eta_5\not\in\localringofintegers^\times$. Due to the coprimality conditions in \Cref{fig:1-Dynkin-diagram}, we obtain $\eta_i\in\localringofintegers^\times$ for all $i=1,\ldots,4,6,\ldots,8$. Thus, $\pabsvalue{x+y} = \pabsvalue{\eta_9}\le 1$. Moreover, we obtain $\pabsvalue{xy}=\frac{1}{\pabsvalue{\eta_5}^2}>1$. Analogously, we have $\eta_5\not\in\localringofintegers^\times$ if $\pabsvalue{xy}>1$, and the same argument shows $\pabsvalue{y}>1$.
		
		Vice versa, let $(x,y)\in K_\pprime^2$ with ($\pabsvalue{y}\le 1$ and $\lvert xy\rvert_\mathrm{p}<1$) or $\pabsvalue{x+y}\le 1$. We want to construct an integral point on the torsor lying above $(x,y)$. The two cases $\pabsvalue{y}\le1$ and $\pabsvalue{x}\le 1$, as well as $\pabsvalue{y}\le 1$ and $\pabsvalue{x}>1$ with the extra condition $\pabsvalue{xy}\le 1$ work as the above cases (1) and (3). 
		It remains to consider $\pabsvalue{y}>1$. As in the situation of $\widetilde{\mathcal{U}_1}(\mathcal{O}_{K,\mathfrak{p}})$, one shows that $\pabsvalue{x+y}\le1$ implies $\pabsvalue{x}>1$, too. Then, we can choose the same values for $\eta_i$, $i=1,\ldots,9$, as above.
	\end{proof} 
	
	\begin{lemma}
		Let $v$ be a place of $K$. We have
		\begin{align*}
			\mathrm{d}f_{*}\tau_{(\widetilde{S},D_1),v} &= \frac{1}{\max\lbrace 1,\absvalue{x}_v,\absvalue{y}_v,\absvalue{xy}_v \rbrace}\mathrm{d}x\,\mathrm{d}y,\quad\text{and}\\
			\mathrm{d}f_{*}\tau_{(\widetilde{S},D_2),v} &= \frac{1}{\max\lbrace1,\absvalue{x}_v,\absvalue{xy}_v\rbrace}\mathrm{d}x\,\mathrm{d}y
		\end{align*}
		for the measures $\tau_{(\widetilde{S},D_i),v}$ defined in \cite[§ 2.4.3]{chambertloir_tschinkel_2010}.
	\end{lemma}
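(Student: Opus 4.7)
The plan is to transport the Tamagawa measure to $\mathbb{A}^2_K$ via the isomorphism $g$ and identify it by pulling back the sections in $M_i$ that define the adelic metric on $\omega_{\widetilde{S}}(D_i)^\vee$.

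The setup rests on two observations. First, $g \colon \mathbb{A}^2_K \to V'$ is an isomorphism with inverse $f$: the action of $\mathbb{G}_m^6$ on the torsor simply transitively normalizes any $(\eta_1, \ldots, \eta_6) \in (K^\times)^6$ to $(1, \ldots, 1)$, and the torsor equation then forces $\eta_9 = -\eta_7 - \eta_8$, so $V'$ is parameterized by $(x,y) = (\eta_7, \eta_8)$. Second, $|D_1| = V(\eta_1) \cup V(\eta_2) \cup V(\eta_3)$ and $|D_2| = |D_1| \cup V(\eta_4)$ are both disjoint from $V'$; in particular the sections in $M_i$ admit a common non-vanishing element (namely $\eta_1 \eta_2^2 \eta_3 \eta_4 \eta_5^2 \eta_6$ for $M_1$ and $\eta_1 \eta_2^2 \eta_3 \eta_5^2 \eta_6$ for $M_2$) providing a local trivialization of $\omega_{\widetilde{S}}(D_i)^\vee$ on $V'$.

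Next I would pull back the monomials in $M_i$ via $g$. Substituting $\eta_j = 1$ for $j \le 6$, $\eta_7 = x$, $\eta_8 = y$, $\eta_9 = -x-y$ into the monomials from \Cref{gcd_sets_M} yields
\[
g^* M_1 = \{x, \, y, \, 1, \, xy\}, \qquad g^* M_2 = \{x, \, 1, \, xy\}.
\]
By the CLT construction of the adelic metric from global sections \cite[\S\S\,2.1.10, 2.4.3]{chambertloir_tschinkel_2010}, in the trivialization of $\omega_{\widetilde{S}}(D_i)^\vee|_{V'}$ by the local generator $s$ with $s^{-1} = f^*(dx \wedge dy)$, the norm reads $\|s\|_v = 1/\max_{m \in M_i} |g^* m|_v$, and the local Tamagawa measure on $V'(K_v)$ expressed in the $(x,y)$ coordinates is $d\tau_v = \|s\|_v \cdot dx\, dy$. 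Combining these gives
\[
df_* \tau_{(\widetilde{S}, D_i), v} = \frac{dx\, dy}{\max_{m \in M_i} |g^* m|_v},
\]
and substituting the monomials above produces the stated formulas.

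The main obstacle is confirming that the adelic metric induced on $\omega_{\widetilde{S}}(D_i)^\vee$ by the sections $M_i$ is precisely the pullback of the sup-norm metric on $\mathcal{O}(1)$ along the associated morphism, so that the formula $\|s\|_v = 1/\max_{m \in M_i} |g^* m|_v$ holds at every place (including the archimedean one). Once this is verified, the remaining identifications — between the Cox-ring evaluation at the representative chosen by $g$ and the function $m/s$ in the trivialization, and between the log-anticanonical and ordinary Tamagawa measures on $V'$ — are routine, and the substitution finishes the proof.
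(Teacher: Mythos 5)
Your proposal is correct and follows essentially the same route as the paper: both express the local log-Tamagawa density in the chart as the reciprocal of the maximum of the monomial sections in $M_i$ (which by construction define the metric on $\omega_{\widetilde{S}}(D_i)^\vee$, so your flagged ``main obstacle'' is resolved by definition rather than by an argument) and then evaluate at the normalised representative $(1:\dots:1:x:y:-x-y)$, which is exactly the paper's evaluation of the norm in Cox coordinates at the image of $g$. The only input you leave implicit that the paper makes explicit (by citation) is that $\mathrm{d}x\wedge\mathrm{d}y$ corresponds to $1/\eta_1^2\eta_2^3\eta_3^2\eta_4\eta_5^2\eta_6$ in the Cox ring, which is what guarantees that your trivialising section $s$ pulls back to $1$ under $g$.
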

	\begin{proof}
		In the first case, we have
		\begin{equation}\label{norm_D_1}
			\mathrm{d}f_{*}\tau_{(\widetilde{S},D_1),v} = \norm{(\mathrm{d}x\wedge\mathrm{d}y)\otimes 1_{E_1}\otimes 1_{E_2}\otimes 1_{E_3}}^{-1}_{\omega_{\widetilde{S}}(D_1),v}\mathrm{d}x\,\mathrm{d}y.
		\end{equation}
		Arguing as in \cite[Lemma 23]{derenthal_wilsch_21}, with $\mathrm{d}x\wedge\mathrm{d}y$ mapping to $1/\eta_1^2\eta_2^3\eta_3^2\eta_4\eta_5^2\eta_6$, the norm in \eqref{norm_D_1} at a point $\bm{\eta}$ can be written as
		\begin{equation}\label{norm_in_cox_coordinates_1}
			\frac{\norm{\eta_1^2\eta_2^3\eta_3^2\eta_4\eta_5^2\eta_6}_v}{\norm{\eta_1\eta_2\eta_3}_v\max\lbrace \norm{\eta_1\eta_2\eta_4^2\eta_5\eta_7}_v,\norm{\eta_2\eta_3\eta_5\eta_6^2\eta_8}_v,\norm{\eta_1\eta_2^2\eta_3\eta_4\eta_5^2\eta_6}_v,\norm{\eta_4\eta_6\eta_7\eta_8}_v \rbrace}.
		\end{equation}
		Evaluating this in the image $(x,y)$ of $f$ in $\bm{\eta}$ yields the statement.
		
		In the second case, we have
		\begin{equation*}
			\mathrm{d}f_{*}\tau_{(\widetilde{S},D_2),v} = 
			\norm{(\mathrm{d}x\wedge\mathrm{d}y)\otimes 1_{E_1}\otimes 1_{E_2}\otimes 1_{E_3}\otimes 1_{E_4} }^{-1}_{\omega_{\widetilde{S}}(D_2)}\mathrm{d}x\,\mathrm{d}y,
		\end{equation*}
		and analogously determine the norm of this in Cox coordinates:
		\begin{equation}\label{norm_in_cox_coordinates_2}
			\frac{\norm{\eta_1^2\eta_2^3\eta_3^2\eta_4\eta_5^2\eta_6}_v}{\norm{\eta_1\eta_2\eta_3\eta_4}_v\max\lbrace \norm{\eta_1\eta_2\eta_4\eta_5\eta_7}_v,\norm{\eta_1\eta_2^2\eta_3\eta_5^2\eta_6}_v,\norm{\eta_6\eta_7\eta_8}_v \rbrace}.
		\end{equation}
	\end{proof}
	
	\begin{prop}
		Let $\pprime$ be a prime ideal in $K$. We have
		\begin{equation*}
			\tau_{(\widetilde{S},D_i),\pprime}(\widetilde{\mathcal{U}}_i(\localringofintegers)) = 1+\frac{6-\#D_i}{\idealnorm{\pprime}}.
		\end{equation*}
	\end{prop}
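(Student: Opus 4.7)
Since $\widetilde{S}\setminus V'$ is a finite union of irreducible curves, it has measure zero on the $2$-dimensional analytic manifold $\widetilde{S}(K_\pprime)$. Combining the two preceding lemmas therefore reduces the computation to the integral
\[
\tau_{(\widetilde{S},D_i),\pprime}(\widetilde{\mathcal{U}}_i(\localringofintegers)) = \int_{\Omega_i}\frac{\mathrm{d}x\,\mathrm{d}y}{M_i(x,y)},
\]
where for $i=1$, $\Omega_1=\{\pabsvalue{xy}\le 1\}\cup\{\pabsvalue{x+y}\le 1\}$ and $M_1(x,y)=\max\{1,\pabsvalue{x},\pabsvalue{y},\pabsvalue{xy}\}$, while for $i=2$, $\Omega_2=\{\pabsvalue{y}\le 1,\pabsvalue{xy}\le 1\}\cup\{\pabsvalue{x+y}\le 1\}$ and $M_2(x,y)=\max\{1,\pabsvalue{x},\pabsvalue{xy}\}$. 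Each integral is then handled by inclusion–exclusion applied to its two constituent pieces.

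For $i=1$, the key simplification is the identity $M_1(x,y)=\max\{1,\pabsvalue{x}\}\cdot\max\{1,\pabsvalue{y}\}$, which decouples the integrand. Writing $q=\idealnorm{\pprime}$ and stratifying $K_\pprime$ by the spheres $\{v_\pprime(x)=n\}$ of Haar volume $q^{-n}(1-q^{-1})$, the integral over $\{\pabsvalue{xy}\le 1\}$ splits into three non-vanishing cells according to whether each of $\pabsvalue{x}$, $\pabsvalue{y}$ exceeds~$1$; summing the resulting geometric series gives $1+2q^{-1}$. The integral over $\{\pabsvalue{x+y}\le 1\}$ is evaluated using the ultrametric inequality, which forces either $\pabsvalue{x},\pabsvalue{y}\le 1$ or $\pabsvalue{x}=\pabsvalue{y}>1$ with $y\in -x+\localringofintegers$, and produces $1+q^{-1}$. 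Their intersection reduces to the unit polydisc, contributing~$1$. Inclusion–exclusion then yields $\tau_{1,\pprime}=1+3q^{-1}=1+(6-\#D_1)/\idealnorm{\pprime}$.

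For $i=2$ the integrand $1/M_2$ does not factor, and one instead works directly with the stratification by the signs of $v_\pprime(x)$ and $v_\pprime(y)$. The $\{\pabsvalue{x+y}\le 1\}$ integral mirrors the $i=1$ situation, since on its non-trivial cell $\pabsvalue{y}>1$ forces $\pabsvalue{x}=\pabsvalue{y}$ and thus $M_2=\pabsvalue{xy}=M_1$; this contributes $1+q^{-1}$. The piece $\{\pabsvalue{y}\le 1,\pabsvalue{xy}\le 1\}$ contributes $1$ from the unit polydisc together with an extra $q^{-1}$ from the asymmetric cell $\pabsvalue{x}>1$, $\pabsvalue{y}\le \pabsvalue{x}^{-1}$, where $M_2=\pabsvalue{x}$. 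The intersection is once more the unit polydisc, so inclusion–exclusion gives $\tau_{2,\pprime}=1+2q^{-1}=1+(6-\#D_2)/\idealnorm{\pprime}$. The main technical obstacle is the bookkeeping: matching each valuation cell to the correct stratum of $\Omega_i$ and ensuring that the ultrametric identifications are applied without double-counting the boundary cases, particularly in the $i=2$ case where the integrand no longer factors.
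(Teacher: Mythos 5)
Your proof is correct and follows essentially the same route as the paper: both compute the $\pprime$-adic integral of $1/M_i$ over the region supplied by the two preceding lemmas by stratifying $K_\pprime^2$ according to the signs of $v_\pprime(x)$ and $v_\pprime(y)$ and summing the resulting geometric series. The only cosmetic difference is that you organise the domain by inclusion--exclusion on the two pieces of the union (aided, for $i=1$, by the factorisation $M_1=\max\{1,\pabsvalue{x}\}\max\{1,\pabsvalue{y}\}$), whereas the paper partitions it directly into the same disjoint valuation cells.
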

	\begin{proof}
		We integrate $\mathrm{d}f_{*}\tau_{(\widetilde{S},D_i),\pprime}$ over the set of integral points $f(\widetilde{\mathcal{U}}_i(\localringofintegers) \cap V'(K_\pprime))$, that is
		\begin{equation}\label{tamagava_volume_formula}
			\tau_{(\widetilde{S},D_i),\pprime}(\widetilde{\mathcal{U}}_i(\localringofintegers)) = \int_{f(\widetilde{\mathcal{U}}_i(\localringofintegers) \cap V'(K_\pprime))}\mathrm{d}f_{*}\tau_{(\widetilde{S},D_i),\pprime}.
		\end{equation}
		With the two previous lemmas, for $i=1$ this converts into
		\begin{equation*}
			\int_{\substack{x,y\in K_\pprime\\ \pabsvalue{xy}\le 1\text{ or }\pabsvalue{x+y}\le1}}
			\frac{1}{\max\lbrace 1,\pabsvalue{x},\pabsvalue{y},\pabsvalue{xy} \rbrace}\mathrm{d}x\,\mathrm{d}y
		\end{equation*}
		for the Tamagawa volumes at finite places.
		
		We want to compute this volume. Therefore, we subdivide the domain of integration into the regions with
		$\pabsvalue{x},\pabsvalue{y}>1$, and $\pabsvalue{y}>1\ge\pabsvalue{x}$, and $\pabsvalue{x}>1\ge\pabsvalue{y}$, and $\pabsvalue{x},\pabsvalue{y}\le1$
		in order to simplify the denominator. We obtain
		\begin{equation}\label{split_integral_tau}
			\begin{split}
				\int_{\substack{\pabsvalue{x},\pabsvalue{y}>1\\ \pabsvalue{x+y}\le 1}} \frac{1}{\pabsvalue{xy}}\mathrm{d}x\,\mathrm{d}y
				+ \int_{\substack{\pabsvalue{xy}\le 1\\ \pabsvalue{y}>1\ge \pabsvalue{x}}}  \frac{1}{\pabsvalue{y}}\mathrm{d}x\,\mathrm{d}y
				+ \int_{\substack{\pabsvalue{xy}\le 1\\ \pabsvalue{x}>1\ge \pabsvalue{y}}}  \frac{1}{\pabsvalue{x}}\mathrm{d}x\,\mathrm{d}y
				+ \int_{\pabsvalue{x},\pabsvalue{y}\le 1} \mathrm{d}x\,\mathrm{d}y.
			\end{split}
		\end{equation}
		We start by computing the first integral in \eqref{split_integral_tau}. Due to $\pabsvalue{y}>1$ and $\pabsvalue{x+y}\le1$ we have $\pabsvalue{x+y}<\pabsvalue{y}$. Therefore, $\pabsvalue{x}=\pabsvalue{y}$. Hence, the integral simplifies to
		\begin{equation*}
			\int_{\pabsvalue{y}>1,\pabsvalue{x+y}\le 1}\frac{1}{\pabsvalue{y}^2}\mathrm{d}x\,\mathrm{d}y.
		\end{equation*}
		This can be transformed into
		\begin{align*}
			\int_{\pabsvalue{y}>1}\frac{1}{\pabsvalue{y}^2}\mathrm{d}y\int_{\pabsvalue{x}\le 1}\mathrm{d}x
			&= \int_{\pabsvalue{y}>1}\frac{1}{\pabsvalue{y}^2}\mathrm{d}y\\
			&= \sum_{k=-\infty}^{-1}\int_{\pabsvalue{y}=\idealnorm{\pprime}^{-k}}\frac{1}{\idealnorm{\pprime}^{-2k}}\mathrm{d}y\\
			&= \sum_{k=-\infty}^{-1}\idealnorm{\pprime}^{2k}\frac{1}{\idealnorm{\pprime}^k}\left(1-\frac{1}{\idealnorm{\pprime}}\right)\\
			&= \left(1-\frac{1}{\idealnorm{\pprime}}\right) \frac{1}{\idealnorm{\pprime}}\sum_{k=0}^\infty \idealnorm{\pprime}^{-k}\\
			&= \frac{1}{\idealnorm{\pprime}}.
		\end{align*}
		The second and third integral in \eqref{split_integral_tau} are symmetric, hence identical, and each has the value
		\begin{align*}
			\int_{\substack{\pabsvalue{y}>1,\pabsvalue{x}\le1\\ \pabsvalue{y}\le\frac{1}{\pabsvalue{x}}}} \frac{1}{\pabsvalue{y}}\mathrm{d}x\,\mathrm{d}y 
			&= \int_{\pabsvalue{x}\le 1}\left( \sum_{k=-v_\pprime(x)}^{-1} \int_{\pabsvalue{y}=\idealnorm{\pprime}^{-k}}\frac{1}{\idealnorm{\pprime}^{-k}}\mathrm{d}y \right)\mathrm{d}x\\
			&= \int_{\pabsvalue{x}\le 1}\left( \sum_{k=-v_\pprime(x)}^{-1} \idealnorm{\pprime}^{k}\frac{1}{\idealnorm{\pprime}^k}\left(1-\frac{1}{\idealnorm{\pprime}}\right) \right)\mathrm{d}x\\
			&= \int_{\pabsvalue{x}\le 1} \left(1-\frac{1}{\idealnorm{\pprime}}\right) v_\pprime(x)\mathrm{d}x\\
			&= \left(1-\frac{1}{\idealnorm{\pprime}}\right) \sum_{k=0}^\infty\int_{\pabsvalue{x}=\idealnorm{\pprime}^{-k}}k\mathrm{d}x\\
			&= \left(1-\frac{1}{\idealnorm{\pprime}}\right)^2 \sum_{k=0}^\infty\frac{k}{\idealnorm{\pprime}^k}\\
			&= \frac{1}{\idealnorm{\pprime}}.
		\end{align*}
		The fourth integral has the value $1$. Adding the four terms in \eqref{split_integral_tau} gives the claim for $i=1$.
		
		Let $i=2$. The previous lemmas transform \eqref{tamagava_volume_formula} into
		\begin{equation*}
			\int_{\substack{x,y\in K_\pprime,~\pabsvalue{x+y}\le1\text{ or }\\ (\pabsvalue{y}\le 1 \text{ and } \pabsvalue{xy}\le1)}}
			\frac{1}{\max\lbrace 1,\pabsvalue{x},\pabsvalue{xy} \rbrace}\mathrm{d}x\,\mathrm{d}y.
		\end{equation*}
		Again, we subdivide the domain of integration into smaller regions and obtain
		\begin{equation*}
			\int_{\substack{\pabsvalue{x},\pabsvalue{y}>1\\\pabsvalue{x+y}\le1}}\frac{1}{\pabsvalue{xy}}\mathrm{d}x\,\mathrm{d}y
			+ \int_{\substack{\pabsvalue{y}\le 1,\pabsvalue{x}>1\\\pabsvalue{xy}\le1}}\frac{1}{\pabsvalue{x}}\mathrm{d}x\,\mathrm{d}y
			+ \int_{\pabsvalue{x},\pabsvalue{y}\le 1}\mathrm{d}x\,\mathrm{d}y.
		\end{equation*}
		Analogous to the case $i=1$, we obtain $\frac{1}{\idealnorm{\pprime}}$ for the first and second integral, and 1 for the last integral. Summing these three values gives the statement for $i=2$.
	\end{proof}

	The remaining parts of the constant are archimedean measures, which are associated with maximal faces of the Clemens complex (see e.g.\ \cite[§ 3.1]{chambertloir_tschinkel_2010} for a definition).
	The divisor $D_1$ has three vertices corresponding to its components, and
	two $1$-simplices, which we will call $A_1=\{E_1,E_2\}$ and $A_2=\{E_2,E_3\}$, added between the intersecting exceptional curves (see \Cref{fig:Clemens-complex}).
	The Clemens complex of the divisor $D_2$ consists of four vertices corresponding to its components, and three $1$-simplices $A_1$, $A_2$ and $A_3 = \{E_3,E_4\}$ between the intersecting exceptional curves. 
	
	\begin{figure}[ht]
		\centering
		\begin{tikzpicture}
			\coordinate[label=above: $E_1$] (E_1) at (0,0);
			\coordinate[label=above: $E_2$] (E_2) at (3,0);
			\coordinate[label=above: $E_3$] (E_3) at (6,0);
			\coordinate[label=below: $A_1$] (A_1) at ($(E_1)!0.5!(E_2)$);
			\coordinate[label=below: $A_2$] (A_2) at ($(E_2)!0.5!(E_3)$);
			\fill (E_1) circle (2pt);
			\fill (E_2) circle (2pt);
			\fill (E_3) circle (2pt);
			\draw[-] (E_1) -- (E_2);
			\draw[-] (E_2) -- (E_3);
		\end{tikzpicture}
		\caption{The Clemens complex of $D_1$.} 
		\label{fig:Clemens-complex}
	\end{figure}
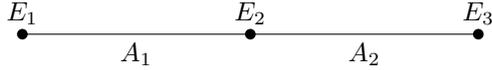
	
	As described in \cite[§ 6]{derenthal_wilsch_21}, and \cite[§ 2.1.6]{wilsch_2021b}, we define $D_A=\bigcap_{E\in A}E$ and $\Delta_{i,A}=D_i-\sum_{E\in A}E$ for a face $A$ of the Clemens complex associated with $D_i$. If $A$ is a maximal face of a Clemens complex, the repeated use of the adjunction isomorphism and a metric on the log-canonical bundle $\omega_{\widetilde{S}}(D_i)$ induce a metric on the bundle $\omega_{D_A}\otimes\mathcal{O}_{\widetilde{S}}(\Delta_{i,A})\vert_{D_A}$ on $D_A$, and hence a Tamagawa measure $\tau_{D_A,\infty}$ on $D_A(\CC)$. We are interested in the modified measure $\norm{1_{\Delta_{i,A}}\vert_{D_A}}_{\mathcal{O}_{\widetilde{S}}(\Delta_{i,A})\vert_{D_A,\infty}}^{-1}\tau_{D_A,\infty}$. For $A$ maximal, the canonical section $1_{\Delta_{i,A}}$ does not have a pole on $D_A$. Since further $D_A(\CC)$ is compact,
	the norm $\norm{1_{\Delta_{i,A}}\vert_{D_A}}_{\mathcal{O}_{\widetilde{S}}(\Delta_{i,A})\vert_{D_A,\infty}}$ is bounded on $D_A(\CC)$ for any metric. Therefore,
	\begin{equation*}
		\norm{\omega\otimes 1_{\Delta_{i,A}}\vert_{D_A}}^{-1}_{\omega_{D_A}\otimes\mathcal{O}_{\widetilde{S}}(\Delta_{i,A})\vert_{D_A,\infty}}\absvalue{\omega}
		= \norm{1_{\Delta_{i,A}}\vert_{D_A}}^{-1}_{\mathcal{O}_{\widetilde{S}}(\Delta_{i,A})\vert_{D_A,\infty}}\tau_{D_A,\infty}
	\end{equation*}
	defines a finite measure on $D_A(\CC)$. Hereby, the above equality is true for any choice of metrics on $\omega_{D_A}$ and $\mathcal{O}_{\widetilde{S}}(\Delta_{i,A})\vert_{D_A}$ that are compatible with the one on their tensor product. The so defined measure is independent of the choice of a form $\omega\in\omega_{D_A}$.
	After normalising this measure with a factor $c_\CC^{\#A} = (2\pi)^{\#A}$, we call it \emph{residue measure} and denote it by $\tau_{i,D_A,\infty}$.
	We refer to \cite[§§ 2.1.12, 4.1]{chambertloir_tschinkel_2010} for details on this construction.
	
	\begin{prop}
		We have
		\begin{equation*}
			\tau_{i,D_A,\infty}(D_{A}(\CC)) = 4\pi^2 
		\end{equation*}
		for every maximal-dimensional face $A$ of the Clemens complex for $D_i$, $i\in\{1,2\}$.
	\end{prop}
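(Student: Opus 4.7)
\medskip

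\noindent\textbf{Proof plan.} For every maximal face $A = \{E_j, E_k\}$ of the Clemens complex of $D_i$, the Dynkin diagram of \Cref{fig:1-Dynkin-diagram} shows that $E_j$ and $E_k$ share exactly one edge, so the stratum $D_A = E_j \cap E_k$ is a single reduced $K$-rational point $p_A \in \widetilde{S}$. Hence $D_A(\CC) = \{p_A\}$ and $\tau_{i,D_A,\infty}$ is a Dirac mass concentrated at $p_A$; the task is to identify its weight.

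To compute this weight I pass to the universal torsor. Above $p_A$ we have $\eta_j = \eta_k = 0$, while every other Cox coordinate $\eta_l$ is forced to be nonzero by the monomial generators of the irrelevant ideal $J$ in \eqref{polynomials f_i}. Using the $\mathbb{G}_m^6$-action from \Cref{prop_Y_universal_torsor}(3), I trivialize the torsor in a neighbourhood by setting six of the nonzero $\eta_l$'s equal to $1$ (a rank check on the corresponding $6 \times 6$ submatrix of the degree matrix \eqref{degree_E_i} confirms this is possible in each case), while the last remaining coordinate is determined by the torsor equation. Then $(\eta_j, \eta_k)$ becomes a system of local holomorphic coordinates on $\widetilde{S}$ around $p_A = (0,0)$, cutting out $E_j$ and $E_k$ as the coordinate axes.

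Next I express $\tau_{(\widetilde{S}, D_i), \infty}$ in these coordinates. Substituting the trivialization into the Cox-coordinate formulas \eqref{norm_in_cox_coordinates_1} and \eqref{norm_in_cox_coordinates_2} for $\norm{(dx \wedge dy)\otimes 1_{D_i}}^{-1}_{\omega_{\widetilde{S}}(D_i),\infty}$, I observe that as $(\eta_j, \eta_k) \to 0$ the maximum in the denominator is realised by the constant monomial~$1$; combining this with the Jacobian of $f^{-1}$ (arising from a change of variables of the form $(x,y) = (x(\eta_j,\eta_k), y(\eta_j,\eta_k))$) yields the clean local form
\[
\tau_{(\widetilde{S}, D_i), \infty} = \frac{d\eta_j\, d\eta_k}{\norm{\eta_j}_\infty\, \norm{\eta_k}_\infty}
\]
in a neighbourhood of $p_A$. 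This is the standard log-canonical measure with simple poles only along $E_j$ and $E_k$, and it encodes implicitly the fact that $\norm{1_{\Delta_{i,A}}(p_A)}_\infty = 1$, since the remaining components of $D_i$ contributing to $\Delta_{i,A}$ do not pass through $p_A$ and have been set to~$1$ by the trivialization.

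Finally, passing to polar coordinates $\eta_l = r_l e^{i\theta_l}$ turns the local measure into $\frac{dr_j\, dr_k\, d\theta_j\, d\theta_k}{r_j\, r_k}$. The residue measure of \cite[§\,2.1.12]{chambertloir_tschinkel_2010} at $p_A$ is then obtained by integrating out the two angular variables over $[0, 2\pi)$, producing the normalising factor $c_\CC^{\#A} = (2\pi)^2$, and extracting the Dirac mass at $r_j = r_k = 0$ from $\frac{dr_j}{r_j}\frac{dr_k}{r_k}$ via the iterated Poincar\'e residue. This gives $\tau_{i,D_A,\infty}(D_A(\CC)) = 4\pi^2$. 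The main bookkeeping subtlety is in the preceding paragraph: verifying case-by-case (for $A_1, A_2$ when $i=1$, and for the three maximal faces when $i=2$) that the dominant monomial in the denominator of the Cox-coordinate norm formula is indeed the constant $1$ and that the remaining factors cancel cleanly with the Jacobian.
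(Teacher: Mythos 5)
Your proposal is correct and follows essentially the same route as the paper: your torsor trivialization with six Cox coordinates set to $1$ is exactly the paper's charts $g'$, $g''$, $g'''$, the substitution into \eqref{norm_in_cox_coordinates_1} and \eqref{norm_in_cox_coordinates_2} combined with the Jacobian yields the local density $\norm{\eta_j}_\infty^{-1}\norm{\eta_k}_\infty^{-1}$ with leading coefficient $1$ at $D_A$ (the paper phrases this as the limit $\lim\,|ab|^2/\max\{\dots\}=1$), and the factor $4\pi^2$ is the normalisation $c_\CC^{\#A}=(2\pi)^2$ from the definition of the residue measure. The only cosmetic caveat is that the $(2\pi)^2$ is imposed by convention in \cite[§\,2.1.12]{chambertloir_tschinkel_2010} rather than literally produced by the angular integration, though the two coincide here.
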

	\begin{proof}
		Analogously to Lemma 25 in \cite{derenthal_wilsch_21}, we work in neighbourhoods of the two intersection points $D_{A_1} = E_1\cap E_2$ and $D_{A_2} = E_2\cap E_3$. In order to compute the Tamagawa measures, which are simply real numbers on these points, we consider the charts
		\begin{align*}
			g'\colon \mathbb{A}_K^2&\rightarrow \widetilde{S}, ~ (a,b)\mapsto (a:b:1:1:1:1:1:1:-1-a),\quad\text{and}\\
			g''\colon \mathbb{A}_K^2&\rightarrow \widetilde{S},~ (c,d)\mapsto (1:c:d:1:1:1:1:1:-1-d).
		\end{align*}
		Since $\norm{\mathrm{d}x\wedge\mathrm{d}y} = \norm{\det(J_{f\circ g'})} \norm{\mathrm{d}a\wedge\mathrm{d}b}$, by using \eqref{norm_in_cox_coordinates_1}, we get the norms
		\begin{align*}
			&\norm{(\mathrm{d}a\wedge\mathrm{d}b)\otimes 1_{E_1}\otimes 1_{E_2}\otimes 1_{E_3}}_{\omega_{\widetilde{S}}(D_1)} = \max\lbrace \norm{a^2b^3}_\infty,\norm{a^2b^2}_\infty, \norm{ab^2}_\infty,\norm{ab}_\infty \rbrace,\quad\text{and}\\
			&\norm{(\mathrm{d}c\wedge\mathrm{d}d)\otimes 1_{E_1}\otimes 1_{E_2}\otimes 1_{E_3}}_{\omega_{\widetilde{S}}(D_1)} = \max\lbrace \norm{c^3d^2}_\infty, \norm{c^2d}_\infty, \norm{c^2d^2}_\infty, \norm{cd}_\infty \rbrace.
		\end{align*}
		Then, we obtain the unnormalised Tamagawa
		volume on the points $D_{A_1}(\CC)$ and $D_{A_2}(\CC)$ by 
		\begin{align*}
			\tau'_{1,D_{A_1},\infty} &= \underset{(a,b)\rightarrow (0,0)}{\lim} \frac{\absvalue{ab}^2}{\max\lbrace \absvalue{a^2b^3}^2,\absvalue{a^2b^2}^2, \absvalue{ab^2}^2,\absvalue{ab}^2 \rbrace} = 1, \quad\text{and}\\
			\tau'_{1,D_{A_2},\infty} &= \underset{(c,d)\rightarrow (0,0)}{\lim}
			\frac{\absvalue{cd}^2}{\max\lbrace \absvalue{c^3d^2}^2, \absvalue{c^2d}^2 \absvalue{c^2d^2}^2, \absvalue{cd}^2 \rbrace} = 1.
		\end{align*}
		We renormalise these by multiplying with $c_\CC^2 = 4\pi^2.$ 
		
		In the second case, we additionally consider the intersection point $D_{A_3}=E_3\cap E_4$ and the corresponding chart
		\begin{equation*}
			g'''\colon\mathbb{A}_K^2 \rightarrow \widetilde{S},~ (e,f)\mapsto (1:1:e:f:1:1:1:1:-e-f^2).
		\end{equation*}
		By using \eqref{norm_in_cox_coordinates_2}, we get the norms
		\begin{align*}
			&\norm{(\mathrm{d}a\wedge\mathrm{d}b)\otimes 1_{E_1}\otimes 1_{E_2}\otimes 1_{E_3}\otimes 1_{E_4}}_{\omega_{\widetilde{S}}(D_2)} = \max\lbrace \norm{a^2b^3}_\infty,\norm{a^2b^2}_\infty, \norm{ab}_\infty \rbrace,\\
			&\norm{(\mathrm{d}c\wedge\mathrm{d}d)\otimes 1_{E_1}\otimes 1_{E_2}\otimes 1_{E_3}\otimes 1_{E_4}}_{\omega_{\widetilde{S}}(D_2)} = \max\lbrace \norm{c^3d^2}_\infty, \norm{c^2d}_\infty, \norm{cd}_\infty \rbrace,\quad\text{and}\\
			&\norm{(\mathrm{d}a\wedge\mathrm{d}b)\otimes 1_{E_1}\otimes 1_{E_2}\otimes 1_{E_3}\otimes 1_{E_4}}_{\omega_{\widetilde{S}}(D_2)} = \max\lbrace \norm{e^2f}_\infty,\norm{ef^2}_\infty,\norm{ef}_\infty\rbrace.
		\end{align*}
		As above, we obtain $\tau'_{2,D_{A_1},\infty} = \tau'_{2,D_{A_2},\infty} =1$ and
		\[\tau'_{2,D_{A_3},\infty} = \lim_{(e,f)\rightarrow (0,0)}\frac{\absvalue{ef}^2}{\max\lbrace \absvalue{e^2f}^2,\absvalue{ef^2}^2,\absvalue{ef}^2\rbrace} =1.\]
		We renormalise these Tamagawa volumes by multiplying with $c_\CC^2 = 4\pi^2$.
	\end{proof}
	
	In the next lemma, we compute the rational numbers $\alpha_{i,A}$, which are multiplied with the Tamagawa numbers to compute the archimedean part of the leading constant $c_{i,\infty}$. Here, $A$ is a maximal-dimensional face of the Clemens complex for $D_i$. The factors $\alpha_{i,A}$ were introduced in \cite{chambertloir_tschinkel_2010b} for toric varieties and generalised in \cite[Remark 2.2.9(iv)]{wilsch_2021b} to be 
	\begin{equation*}
		\alpha_{i,A} = \vol\lbrace x\in (\mathrm{Eff}(\widetilde{U}_{i,A}))^\vee\mid \langle x,\omega_{\widetilde{S}}(D_i)^\vee\vert_{\widetilde{U}_{i,A}}\rangle=1\rbrace.
	\end{equation*}
	As in \cite{derenthal_wilsch_21}, in our case the complement of all boundary components not belonging to $A$ is
	\begin{equation}\label{U_tilde_i,A}
		\widetilde{U}_{i,A} = X\setminus\bigcup_{\substack{E_j\subset D_i,\\ E_j\not\in A}}E_j,
	\end{equation}
	and its effective cone is given by $\Lambda_{i,A}=\mathrm{Eff}(\widetilde{U}_{i,A})\subset (\Pic(\widetilde{U}_{i,A}))_\RR$.
	The volume is normalised as in \cite{wilsch_2021b}.
	
	\begin{prop}
		We have
		\begin{align*}
			\alpha_{1,A_1} &= \vol\left\lbrace (t_4,t_5,t_6,t_8)\in\RR^4_{\ge0}~
			\begin{array}{|c}
				-t_4+3t_6+2t_8\ge 1\\
				t_5+2t_6+t_8\le1\\
				t_4+t_6+t_8\le1 
			\end{array}
			\right\rbrace,\\
			\alpha_{1,A_2} &= \vol \left\lbrace (t_4,t_5,t_6,t_8)\in\RR^4_{\ge0}~
			\begin{array}{|c}
				-t_4-t_5+t_6+t_8\ge 0\\
				-t_4+3t_6+2t_8\le 1\\
				t_4+t_6+t_8\le1 
			\end{array}
			\right\rbrace,\\
			\alpha_{2,A_1} &= \vol\left\lbrace(t_5,\dots,t_8)\in\RR^4_{\ge0}~
			\begin{array}{|c}
				3t_6+2t_8\ge1\\
				t_5+2t_6+t_8\le 1\\
				t_6+t_8\le1
			\end{array}
			\right\rbrace,\\
			\alpha_{2,A_2} &= \vol\left\lbrace (t_5,\dots,t_8)\in\RR^4_{\ge0}~
			\begin{array}{|c}
				3t_6+2t_8\le1\\
				-t_5+t_6+t_8\ge0\\
				t_6+t_8\le1
			\end{array}
			\right\rbrace,\quad\text{and}\\
			\alpha_{2,A_3} &= \vol\left\lbrace (t_5,\dots,t_8)\in\RR^4_{\ge0}~
			\begin{array}{|c}
				2t_5+t_6\le1\\
				t_5+2t_6+t_8\ge1\\
				t_6+t_8\le1
			\end{array}
			\right\rbrace.
		\end{align*}
	\end{prop}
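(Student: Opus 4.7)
The plan is to compute each $\alpha_{i,A}$ case by case directly from its definition. First, use \eqref{U_tilde_i,A} to identify $\widetilde{U}_{i,A}$: for $i=1$ exactly one curve is removed ($E_3$ for $A_1$, $E_1$ for $A_2$), and for $i=2$ exactly two curves are removed in each case. Then $\Pic(\widetilde{U}_{i,A})$ is the quotient of $\Pic(\widetilde{S})\cong\ZZ^6$ by the classes of the removed curves, and the effective cone $\Lambda_{i,A}$ is the image of the cone $\sum_j\RR_{\ge 0}[E_j]$ generated by the nine classes listed in \eqref{degree_E_i}. I would select a convenient basis of the quotient from among the $[E_j]$ themselves, chosen so that the dual coordinates $t_j=\langle x,[E_j]\rangle$ match the indices appearing in the claim: $[E_1],[E_4],[E_5],[E_6],[E_8]$ for $i=1$, and $[E_5],[E_6],[E_7],[E_8]$ for $i=2$.

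The dual cone $\Lambda_{i,A}^\vee$ is then cut out by the inequalities $\langle x,[E_k]\rangle\ge 0$ for every effective generator. Those corresponding to basis elements give the nonnegativity conditions $t_j\ge 0$; the others yield linear inequalities after expressing each $[E_k]$ in the chosen basis. The affine constraint $\langle x,-K_{\widetilde{S}}-D_i\rangle=1$ uses $-K_{\widetilde{S}}-D_1\equiv 2l_0-l_1-l_2$ and $-K_{\widetilde{S}}-D_2\equiv 2l_0-l_1-l_2-l_4$, both computed from $-K_{\widetilde{S}}=3l_0-\sum_k l_k$ and \eqref{degree_E_i}. In each case this affine constraint becomes a single linear equation in the $t_j$'s and eliminates one variable. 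Substituting leaves a polytope in the remaining coordinates, which I would verify matches the inequalities stated in the proposition.

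The main bookkeeping task is to verify that the inequalities coming from the non-basis effective generators are implied by those that appear in the claim. As an illustration for $\alpha_{1,A_1}$ (remove $E_3$, so $l_2=l_5$ in the quotient): in the basis $[E_1],[E_4],[E_5],[E_6],[E_8]$ one computes $[E_2]=[E_8]+[E_6]-[E_1]-[E_4]-[E_5]$, $[E_7]=[E_8]+2[E_6]-[E_1]-2[E_4]$, and $[E_9]=[E_8]+2[E_6]-[E_5]$; the affine constraint becomes $2t_8+3t_6-t_1-t_4=1$, and eliminating $t_1$ turns $t_1\ge 0$, $t_2\ge 0$, $t_7\ge 0$ into the three stated inequalities, while the remaining condition $t_9\ge 0$ (i.e.\ $t_5\le 2t_6+t_8$) is easily seen to follow from them. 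Analogous verifications, using the torsor-equation relations $[E_1]+2[E_4]+[E_7]=[E_3]+2[E_6]+[E_8]=[E_5]+[E_9]=l_0$ to express redundant classes, handle the remaining four cases.

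The only subtlety I expect is the dimension count for $i=2$: since $\Pic(\widetilde{U}_{2,A})$ has rank four, the polytope is genuinely three-dimensional after the affine constraint. The notation $(t_5,\dots,t_8)\in\RR^4_{\ge 0}$ in the statement then refers to the four dual coordinates together with the implicit affine relation (which in the basis $[E_5],[E_6],[E_7],[E_8]$ reads $t_6+t_7+t_8=1$), used to eliminate one of the variables before integrating.
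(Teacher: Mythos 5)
Your proposal is correct and follows essentially the same route as the paper: both compute $\alpha_{i,A}$ as the volume of the dual of the image of the effective cone (generated by the negative-curve classes) in $\Pic(\widetilde{U}_{i,A})$, sliced by the affine condition $\langle x,\omega_{\widetilde{S}}(D_i)^\vee\rangle=1$, in a basis of $E_j$-classes with one dual coordinate eliminated unimodularly; your inclusion of the redundant generator $[E_9]$ and your derivation of $\omega_{\widetilde{S}}(D_i)^\vee$ in the $l_j$-basis are harmless variants. The only slip is that your announced basis $[E_1],[E_4],[E_5],[E_6],[E_8]$ for $i=1$ cannot be used for $A_2$, where $E_1$ is the removed component and so $[E_1]=0$ in $\Pic(\widetilde{U}_{1,A_2})$; there you must substitute another class (e.g.\ $[E_3]$ or, as in the paper, $[E_7]$) and eliminate its dual coordinate instead.
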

	\begin{proof}
		We use the construction and notation from \cite[Proof of Lemma 6]{derenthal_wilsch_21} to compute the~$\alpha_{i,A}$. 
		The data in \cite{derenthal14_lms} shows that $\Pic(\widetilde{S})$ has rank $6$ and is generated by the classes of the negative curves $E_1,\ldots,E_8$, where
		\begin{equation}\label{E_7_and_E_8}
			E_2+E_3-E_4+E_5+E_6-E_7,~\text{ and }~E_1+E_2+E_4+E_5-E_6-E_8
		\end{equation}
		are principal divisors.
		Further, $2E_1+3E_2+2E_3+E_4+2E_5+E_6$ has anticanonical class. 
		For a maximal face $A$ of the Clemens complex, we choose $j_0,j_1\in\lbrace1,\ldots,8\rbrace$ such that $E_{j_0}\in A$, $E_{j_1}\not\in D_i\setminus A$ and such that the classes of $E_j$ for $j\in\lbrace1,\ldots,8\rbrace\setminus\{j_0,j_1\}$ form a basis of $\Pic(\widetilde{S})$. There are (unique) linear combinations $\sum_{j\neq j_0,j_1}a_jE_j$ of class $\omega_{\widetilde{S}}(D_i)^\vee$ as well as $\sum_{j\neq j_0,j_1}b_jE_j$ and $\sum_{j\neq j_0,j_1}c_jE_j$ of the same class as $E_{j_0}$ and $E_{j_1}$, respectively. In our case, by using \eqref{E_7_and_E_8} and the fact that
		\begin{equation}\label{class_omega_S(E_j0)}
			E_4+E_6+E_7+E_8,\quad\text{and}\quad E_6+E_7+E_8 
		\end{equation}
		have class $\omega_{\widetilde{S}}(D_1)^\vee,$ and $\omega_{\widetilde{S}}(D_2)^\vee$, respectively, we can compute the coefficients $a_j,b_j,c_j\in\ZZ$.
		Let $J_i=\lbrace j\in\lbrace1,\ldots,8\rbrace\mid E_j\subset D_i,E_j\not\in A\rbrace$ and $J'_i=\lbrace1,\ldots,8\rbrace \setminus(J\cup\lbrace j_0,j_1\rbrace)$. We have
		\[\Pic(\widetilde{U}_{i,A}) = \Pic(\widetilde{S})/\langle E_j\mid j\in J_i\rangle\]
		by the definition \eqref{U_tilde_i,A} of $\widetilde{U}_{i,A}$.
		Therefore, the classes of $E_j$ for $j\in J'_i$ modulo the classes of $E_j$ for $j\in J_i$ are a basis for $\Pic(\widetilde{U}_{i,A})$, and the classes of $E_j$ for $j\in J'_i\cup\{j_0,j_1\}$ modulo the classes of $E_j$ for $j\in J_i$ yield a basis for the effective cone of $\widetilde{U}_{i,A}$.
		We work with the dual basis of $E_j$. Then, we obtain
		\begin{equation*}
			\alpha_{i,A} = \left\lbrace (t_j)\in\RR^{J'_i}_{\ge0} ~\middle\lvert~ \sum_{j\in J'_i}a_jt_j=1,~\sum_{j\in J'_i}b_jt_j\ge0,~\sum_{j\in J'_i}c_jt_j\ge0\right\rbrace.
		\end{equation*}
		
		For $i=1$, we have to compute the two constants $\alpha_{1,A_j},\,j=1,2$, which are associated with the maximal faces $A_{1}=\{E_1,E_2\}$ and $A_2=\{E_2,E_3\}$ of the Clemens complex for $D_i$. 
		We have to consider the two subvarieties $\widetilde{U}_{1,A_1}=\widetilde{S}\setminus E_3$ and $\widetilde{U}_{1,A_2} = \widetilde{S}\setminus E_1$. For $A_1$, we have $J_1=\{3\}$. We can choose $j_0=1$, $j_1=2$. Then, $J'_1=\{4,\ldots,8\}$. The Picard group of $\widetilde{U}_{1,A_1}$ is $(\Pic(\widetilde{S}))/\langle E_3\rangle$; a basis is given by the classes of $E_4,E_5,E_6,E_7,E_8$ modulo $E_3$, and its effective cone is generated by the classes $E_1,E_2,E_4,E_5,E_6,E_7,E_8$ modulo $E_3$.
		
		By using \eqref{E_7_and_E_8}, we have $[E_1] = [E_3-2E_4+2E_6-E_7+E_8]$ and $[E_2] = [-E_3+E_4-E_5-E_6+E_7]$ in $\Pic(\widetilde{S})$, and $E_4+E_6+E_7+E_8$ has class $\omega_{\widetilde{S}}(D_1)^\vee$ by \eqref{class_omega_S(E_j0)}. Working modulo $E_3$ yields
		\begin{equation*}
			\alpha_{1,A_1} = \vol \left\lbrace  
			(t_4,t_5,t_6,t_7,t_8)\in\RR^5_{\ge0}~
			\begin{array}{|c}
				-2t_4+2t_6-t_7+t_8 \ge 0\\
				t_4-t_5-t_6+t_7\ge 0\\
				t_4+t_6+t_7+t_8=1
			\end{array}
			\right\rbrace.
		\end{equation*}
		We eliminate $t_7$. The last equation yields $t_7=1-t_4-t_6-t_8$. By replacing this in the first two inequalities, we obtain the wanted result.
		
		The computation of $\alpha_{1,A_2}$ is similar. Here, we let $j_0=2,j_1=3$. We use the basis of $\Pic(\widetilde{S})/\langle E_1\rangle$ given by the classes of $E_4,E_5,E_6,E_7,E_8$ modulo $E_1$. The divisor $E_2$ has the same class as \linebreak
		$-E_1-E_4-E_5+E_6+E_8$, and $E_3$ has the same class as $E_1+2E_4-2E_6+E_7-E_8$. As above, $E_4+E_6+E_7+E_8$ has class $\omega_{\widetilde{S}}(D_1)^\vee$. We obtain
		\begin{equation*}
			\alpha_{1,A_2} = \vol \left\lbrace  
			(t_4,t_5,t_6,t_7,t_8)\in\RR^5_{\ge0}~
			\begin{array}{|c}
				-t_4-t_5+t_6+t_8\ge 0\\
				2t_4-2t_6+t_7-t_8\ge0\\
				t_4+t_6+t_7+t_8=1
			\end{array}
			\right\rbrace.
		\end{equation*}
		As above, we eliminate $t_7$ to obtain the result.
		
		In the case $i=2$, we have to compute three constants $\alpha_{2,A_j}$, $j=1,2,3$ associated with the maximal faces $A_1,A_2$ and $A_3=\{E_4,E_1\}$ of the Clemens complex of $D_2$. The three subvarieties appearing in the construction of $\alpha_{2,A_j}$ are $\widetilde{U}_{2,A_1} = \widetilde{S}\setminus\{E_3,E_4\}$, $\widetilde{U}_{2,A_2} = \widetilde{S}\setminus\{E_1,E_4\}$ and $\widetilde{U}_{2,A_3} = \widetilde{S}\setminus\{E_2,E_3\}$. In the first case, we have $J_2=\{3,4\}$ and we choose $j_0=1,j_1=2$. Then, we have $J'_2=\{5,\ldots,8\}$. The Picard group of $\widetilde{U}_{2,A_1}$ is $\Pic(\widetilde{S})/\langle E_3,E_4\rangle$ and a basis is given by the classes of $E_5,\ldots,E_8$ modulo $E_3$ and $E_4$.
		By using \eqref{E_7_and_E_8}, we have $[E_1] = [E_3-2E_4+2E_6-E_7+E_8]$ and $[E_2] = [-E_3+E_4-E_5-E_6+E_7]$ in $\Pic(\widetilde{S})$. Together with \eqref{class_omega_S(E_j0)} we obtain
		\begin{equation*}
			\alpha_{2,A_1} = \vol\left\lbrace (t_5,\dots,t_8)\in\RR^4_{\ge0}~
			\begin{array}{|c}
				2t_6-t_7+t_8\ge 0\\
				-t_5-t_6+t_7\ge0\\
				t_6+t_7+t_8=1
			\end{array}\right\rbrace.
		\end{equation*}
		We eliminate $t_7$.
		The other two cases work similar. For $\alpha_{2,A_2}$ we choose $J_2=\{1,4\}$, $j_0=2$, and $j_1=3$. The Picard group of $\widetilde{U}_{2,A_2}$ is $\Pic(\widetilde{S})/\langle E_1,E_4\rangle$ and a basis is given by the classes of $E_5,\ldots,E_8$ modulo $E_1$ and $E_4$. We have $[E_2] = [-E_1-E_4-E_5+E_6+E_8]$ and $[E_3]=[E_1+2E_4-2E_6+E_7-E_8]$.
		For $\alpha_{2,A_3}$ we choose $J_2=\{2,3\}$ and $j_0=1,j_1=4$. We obtain $\Pic(\widetilde{S})/\langle E_2,E_3\rangle$ for the Picard group of $\widetilde{U}_{2,A_3}$, and a basis is given by the classes of $E_5,\ldots,E_8$ modulo $E_2$ and $E_3$. By using $[E_1] = [-2E_2-E_3-2E_5+E_7+E_8]$ and $[E_4] = [E_2+E_3+E_5+E_6-E_7]$, we obtain the stated results.
	\end{proof}
	
	\begin{cor}
		In total, for $i\in\{1,2\}$ we get the archimedean contribution
		\begin{equation*}
			c_{i,\infty} = \sum_{A}\alpha_{i,A}\tau_{i,A,\infty}(D_{A}(\CC))
			=\constant_i
		\end{equation*}
		to the expected constant, where the sum runs through the maximal faces $A$ of the Clemens complex of $D_i$, with $\constant_i$ defined as in \Cref{def_C_i}.
	\end{cor}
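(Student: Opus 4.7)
The plan is to note that the previous proposition gives $\tau_{i,D_A,\infty}(D_A(\CC)) = 4\pi^2$ at every maximal face $A$ of the Clemens complex of $D_i$, so
\[
\sum_A \alpha_{i,A}\, \tau_{i,D_A,\infty}(D_A(\CC)) \;=\; 4\pi^2 \sum_{A} \alpha_{i,A}.
\]
Since $\constant_i = 4\pi^2 \cdot \vol(P_i)$ for the polytope $P_i$ exhibited in \Cref{def_C_i}, it will suffice to show that the polytopes $P_{i,A}$ computed in the preceding proposition give a dissection of $P_i$, i.e.\ $P_i = \bigcup_A P_{i,A}$ with pairwise intersections of Lebesgue measure zero.

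For $i=1$, the two regions $P_{1,A_1}$ and $P_{1,A_2}$ are already separated by the hyperplane $-t_4+3t_6+2t_8 = 1$, hence disjoint up to measure zero. First I would prove the inclusion $P_{1,A_1}\cup P_{1,A_2}\subseteq P_1$ by elementary manipulation of the defining inequalities: in $P_{1,A_1}$, combining $-t_4+3t_6+2t_8 \ge 1$ with $t_5+2t_6+t_8 \le 1$ yields $t_4+t_5-t_6-t_8 \le 0$, which is the nontrivial defining inequality of $P_1$; in $P_{1,A_2}$ this inequality is given directly, and $t_5+2t_6+t_8 \le 1$ follows by combining $-t_4-t_5+t_6+t_8 \ge 0$, $t_4+t_6+t_8 \le 1$, and $-t_4+3t_6+2t_8 \le 1$. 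For the reverse inclusion, any point of $P_1$ is placed into $P_{1,A_1}$ or $P_{1,A_2}$ according to the sign of $-t_4+3t_6+2t_8 - 1$, and the remaining defining constraints of the chosen region follow either by assumption or from the $P_1$-inequalities.

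For $i=2$ a similar argument applies to the three regions $P_{2,A_1}, P_{2,A_2}, P_{2,A_3}$, now with the decomposition governed by the two hyperplanes $3t_6+2t_8 = 1$ and $t_5+2t_6+t_8 = 1$. Pairwise disjointness of $P_{2,A_1}$ and $P_{2,A_2}$ (resp.\ $P_{2,A_1}$ and $P_{2,A_3}$) is immediate from these hyperplanes; disjointness of $P_{2,A_2}$ and $P_{2,A_3}$ follows because in $P_{2,A_3}$ the inequalities $2t_5+t_6\le 1$ and $t_5+2t_6+t_8\ge 1$ give $t_5\le t_6+t_8$, which combined with $t_5+2t_6+t_8\ge 1$ forces $3t_6+2t_8\ge 1$, incompatible with $P_{2,A_2}$. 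The inclusions $P_{2,A_j}\subseteq P_2$ are obtained by analogous elementary combinations, and the reverse inclusion is handled by a case analysis on the signs of $3t_6+2t_8-1$ and $t_5+2t_6+t_8-1$: a point of $P_2$ lies in $P_{2,A_1}$ when both quantities are respectively nonneg.\ and nonpos., in $P_{2,A_2}$ when $3t_6+2t_8 \le 1$, and in $P_{2,A_3}$ when $t_5+2t_6+t_8\ge 1$.

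The main obstacle here is purely bookkeeping: no deep argument is required, but every linear inequality must be verified carefully in both directions of each inclusion so that the decomposition is genuinely a partition modulo measure-zero boundaries. Once all the inclusions are checked, summing the volumes gives $\sum_A \alpha_{i,A} = \vol(P_i)$, and multiplying by $4\pi^2$ yields the claimed identity with $\constant_i$.
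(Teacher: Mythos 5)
Your proposal is correct and follows exactly the paper's argument: the paper likewise observes that the polytopes with volumes $\alpha_{i,A}$ fit together (up to measure-zero overlaps on the separating hyperplanes) into the polytope defining $\constant_i$, so that $\sum_A\alpha_{i,A}=\vol(P_i)$ and the common factor $4\pi^2$ from the residue measures yields $\constant_i$. The only difference is that you verify the dissection inequalities explicitly (and correctly), whereas the paper asserts it as immediate and evaluates the volume by computer.
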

	\begin{proof}
		One easily sees that the two polytopes with volumes $\alpha_{1,A_1}$ and $\alpha_{1,A_2}$ fit together to the one stated in $\constant_1$. The same is true for $i=2$. Using SageMath, we explicitly compute the volume.
	\end{proof}

	\section{Counting over the rational numbers}\label{section: simplifications for the rational numbers}
	From now on, let $K=\QQ$. Analogous to \eqref{def_N_i} we define $N_1(B)$ with $K$ replaced by $\QQ$ and $\ringofintegers$ replaced by $\ZZ$. We do the same for $N_2(B)$.
	Then, the following theorem is the analogue of \Cref{main_theorem}.
	
	\begin{theorem}\label{theorem_over_Q}
		As $B\rightarrow\infty$, we have
		\begin{align*}
			N_1(B) &= \frac{1}{144}\prod_p \left(\left(1-\frac{1}{p}\right)^3\left(1+\frac{3}{p}\right)\right)B(\log B)^4 + O\left(B\log B^3\log\log B\right),\quad\text{and}\\
			N_2(B) &= \frac{11}{72}\prod_p \left(\left(1-\frac{1}{p}\right)^2\left(1+\frac{2}{p}\right)\right)
			B(\log B)^3 + O\left(B\log B^2\log\log B\right),
		\end{align*}
		where the product runs over all primes $p\in\ZZ$.
	\end{theorem}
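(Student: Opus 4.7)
The plan is to follow the same strategy as the proof of \Cref{main_theorem}, with arithmetic simplifications arising from $h_\QQ = R_\QQ = |\Delta_\QQ| = \rho_\QQ = 1$ and the unique archimedean place of $\QQ$ being real rather than complex. First, I observe that since $\mathcal{C}_i$ collapses to the single tuple with $C_j = \ZZ$ for all $j$, \Cref{cor_correspondence_integral_points_universal_torsor_points} reduces to a $2^6$-to-$1$ correspondence between $\mathcal{U}_i(\ZZ) \cap V(\QQ)$ and the set of tuples $(\eta_1, \dots, \eta_9) \in (\ZZ \setminus \{0\})^8 \times \ZZ$ satisfying \eqref{torsor-equation_new}, \eqref{gcd-condition_eta}, and $\eta_j \in \{\pm 1\}$ for $E_j \subset |D_i|$.

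I would then perform the two summations analogously to \Cref{section: summations}. In the first summation (the real analog of \Cref{lemma_first_summation}), the inner sum $\sum_{\eta_8 \in \ZZ}$ is approximated by a one-dimensional real integral using that $\ZZ \subset \RR$ has covolume $1$, so the density factor $2/\sqrt{|\Delta_K|}$ from the imaginary quadratic setting is replaced by $1$. The remaining sums over $\eta_4, \dots, \eta_7 \in \ZZ \setminus \{0\}$ are converted to sums over positive integer ideals via $\sum_{\eta \in \ZZ \setminus \{0\}} = \omega_\QQ \sum_{(n),\,n \ge 1}$ and evaluated using the classical ($K=\QQ$) version of \cite[Proposition 7.3]{derenthal14_compos} with $\rho_\QQ = 1$. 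The $p$-adic factors $\Theta_{8,p}$ and $\Theta_0^{(i)}$ are identical to those appearing in \Cref{lemma_first_summation} and \Cref{lemma_remaining_summations}, since they only depend on the finite completions $\QQ_p$.

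Finally, I would evaluate the real integral $V_0^{(i)}(B)_\QQ$ using real analogs of \Cref{cor_V_0} and \Cref{def_C_i}, with $\norminf{\cdot} = |\cdot|^2$ replaced by $|\cdot|$ and complex Lebesgue measure by real Lebesgue measure. The polar-coordinate substitution from the proof of \Cref{def_C_i} is replaced by the direct change of variables $\eta_j = \pm B^{t_j}$, producing factors of $2$ from sign choices and $d\eta_j/|\eta_j| = \log B \,dt_j$. Since the linear inequalities on the $t_j$ are the same after accounting for the change of norms, the resulting polytopes coincide with those of \Cref{def_C_i}, and so their volumes $1/72$ (for $i=1$) and $11/72$ (for $i=2$) appear in the leading term.

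The main obstacle will be the careful bookkeeping of all numerical constants: the factors of $2$ from the real sign choices, the factor $\omega_\QQ^{-6} = 2^{-6}$ from the torsor correspondence, the replacement of the complex density $2/\sqrt{|\Delta_K|}$ by $1$, and the difference between the real and complex archimedean normalisations. These must combine consistently to produce the constants $1/144$ and $11/72$, which can be cross-checked by specialising the framework of \Cref{section: The leading constant} to $K = \QQ$, where the real archimedean Tamagawa volumes $\tau_{i,D_A,\infty}(D_A(\RR))$ take the place of the complex numbers $4\pi^2$ computed in the proposition.
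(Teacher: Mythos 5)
Your plan is the same as the paper's own proof, which is itself only a sketch: reduce to the single trivial twist, rerun \Cref{section: summations} with $\omega_\QQ=2$, $h_\QQ=\rho_\QQ=1$, the lattice covolume factor $2/\sqrt{\absvalue{\Delta_K}}$ replaced by $1$, complex integration replaced by real integration (so $\pi$ becomes $2$ and $\sqrt{t_j}$ becomes $t_j$), with only the error term of the first summation needing a separate argument. So there is no methodological divergence to discuss.

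There is, however, a genuine gap at the step you yourself defer: you assert that the powers of $2$ ``must combine consistently to produce the constants $1/144$ and $11/72$'', but that combination is the entire content of the proof, and executing your recipe literally does not land on those numbers. Concretely, the net combinatorial factor is $\omega_\QQ^{-6}$ from the torsor correspondence, times $\omega_\QQ^{\#D_i}$ from the unit variables $\eta_j$ with $E_j\subset\lvert D_i\rvert$, times $\omega_\QQ^{7-\#D_i}$ from the sign choices of the remaining nonzero real variables $\eta_j$, $j\in J^{(i)}$; this equals $\omega_\QQ^{-6}\cdot\omega_\QQ^{7}=2$ in both cases, and together with the factor $2$ from the real $\eta_7$-integration and the polytope volumes of \Cref{def_C_i} it yields $\tfrac{4}{72}\,\Theta_0^{(1)}B(\log B)^4=\tfrac{1}{18}\,\Theta_0^{(1)}B(\log B)^4$ and $\tfrac{4\cdot 11}{72}\,\Theta_0^{(2)}B(\log B)^3=\tfrac{11}{18}\,\Theta_0^{(2)}B(\log B)^3$. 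Your own proposed cross-check via \Cref{section: The leading constant} gives the same answer: each maximal stratum $D_A$ is a rational point, its real residue measure is $c_\RR^{\# A}=2^2=4$ (with $c_\RR=2$ replacing $c_\CC=2\pi$), so $c_{1,\infty}=4\cdot\tfrac1{72}=\tfrac1{18}$ and $c_{2,\infty}=4\cdot\tfrac{11}{72}=\tfrac{11}{18}$. Note moreover that no single choice of the real normalisation constant reconciles both stated values with the polytope volumes, since $1/144=\tfrac12\cdot\tfrac1{72}$ while $11/72=1\cdot\tfrac{11}{72}$. So you cannot discharge this step by an appeal to consistency: you must track every factor of $2$ explicitly, and in doing so you will either find a factor of $\omega_\QQ^{6-\#D_i}$ that the accounting above misses, or conclude that the constants in the statement require correction. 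Either way, the bookkeeping is the proof here, not an afterthought.
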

	\begin{proof}
		This is very similar to the case that $K$ is an imaginary quadratic number field as above.
		
		Similar to \cite{derenthal14_compos}, the parameterisation of integral points on the universal torsor is as in \Cref{cor_correspondence_integral_points_universal_torsor_points}. But here and everywhere below we have $\omega_\QQ = 2$ and $h_K=1$. Hence, the system of integral representatives $\mathcal{C}$ contains only the trivial class $\ringofintegers = \ZZ$, and we obtain $\mathcal{O}_j = \ZZ$ for $j=1,\ldots,9$, $\mathcal{O}_{1*},\ldots,\mathcal{O}_{8*} = \ZZ_{\neq0}$ and $\mathcal{O}_{9*}=\ZZ$. Further, we replace $\norminf{\cdot}$ by the ordinary absolute value $\absvalue{\cdot}$ on $\RR$.
		
		The asymptotic formulas are proved almost exactly as in the imaginary quadratic case. We always have to replace $2/\sqrt{\absvalue{\Delta_K}}$ by $1$, $\pi$ by $2$, complex by real integration and $\sqrt{t_i}$ by $t_i$ in the intermediate results. The main term is computed always analogously, but less technical. The error terms are estimated mostly analogous. The main change is as follows.
		
		For the first summation, we use \cite[Proposition 2.4]{derenthal09} with slightly different height functions and some $\eta_i\in\ZZ^\times = \{\pm1\}$. We compute the error term $2^{\omega(\eta_2)+\omega(\eta_1\eta_2\eta_3\eta_4)}$ as the second summand of the error term in \Cref{lemma_first_summation}.
		The other summations and the completion of the proof of \Cref{theorem_over_Q} by computing $V_0^{(i,j)}(B)$ remain essentially unchanged.
	\end{proof}
	
	\printbibliography
	
\end{document}